\documentclass[12pt]{article}

\usepackage[colorlinks, linkcolor=blue, citecolor=blue]{hyperref}           
\usepackage{color}
\usepackage{graphicx,subfigure,amsmath,amssymb,amsfonts,bm,epsfig,epsf,url,dsfont}
\usepackage{amsthm}

\newtheorem{thm}{Theorem}[section]
\newtheorem{lem}[thm]{Lemma}
\newtheorem{assum}[thm]{Assumption}
\newtheorem{proposition}[thm]{Proposition}
\newtheorem{remark}[thm]{Remark}
\newtheorem{definition}[thm]{Definition}
\newtheorem{example}[thm]{Example}
\newtheorem{corollary}[thm]{Corollary}
\newtheorem{model}[thm]{Model}

\usepackage{tikz}
\usepackage{bbm}
\usepackage[small,bf]{caption}
\usepackage{natbib}
\usepackage[top=1in,bottom=1in,left=1in,right=1in]{geometry}
\usepackage{fancybox}
\usepackage{hyperref}
\usepackage{algorithm}
\usepackage{verbatim}

\usepackage{threeparttable}
\usepackage[titletoc,toc]{appendix}

\usepackage{mathtools}

\usepackage{scalerel,stackengine}
\stackMath
\newcommand\reallywidehat[1]{%
\savestack{\tmpbox}{\stretchto{%
  \scaleto{%
    \scalerel*[\widthof{\ensuremath{#1}}]{\kern-.6pt\bigwedge\kern-.6pt}%
    {\rule[-\textheight/2]{1ex}{\textheight}}
  }{\textheight}%
}{0.5ex}}%
\stackon[1pt]{#1}{\tmpbox}%
}

\newcommand*{\rom}[1]{\expandafter\@slowromancap\romannumeral #1@}

\newcommand{\Cov}{\mathrm{Cov}}
\newcommand{\rank}{\mathrm{rank}}

\newcommand{\R}{\mathbb{R}}

\DeclareMathOperator*{\argmax}{arg max}
\DeclareMathOperator*{\argmin}{arg min}

\usepackage{xspace}

\numberwithin{equation}{section}

\allowdisplaybreaks
\usepackage[utf8]{inputenc}
\usepackage{authblk}
\usepackage{url}
\usepackage[left=1in,right=1in,top=1in,bottom=1in]{geometry}

\usepackage{titlesec}
\titleformat{\part}[display]
  {\centering\bfseries\LARGE}
  {}{0pt}{}
\titlespacing*{\part}{0pt}{1.0em}{1.0em}

\usepackage{booktabs}
\usepackage{array}
\usepackage{enumitem}
\newcolumntype{L}[1]{>{\raggedright\arraybackslash}p{#1}}
\setlist[itemize]{leftmargin=*,nosep,topsep=0pt,parsep=0pt,partopsep=0pt}

\usepackage{tablefootnote}

\makeatletter
\renewcommand\paragraph{\@startsection{paragraph}{4}{\z@}%
  {1ex}
  {-1em}
  {\normalfont\normalsize\bfseries}}
\makeatother

\begin{document}

\title{The generalized method of moments is (almost) statistically efficient in low-SNR Gaussian latent-variable models }

\author[1]{Amnon Balanov\thanks{Corresponding author: \url{amnonba15@gmail.com}}}
\author[1]{Tamir Bendory}
\author[2]{Dan Edidin}

\affil[1]{\normalsize School of Electrical and Computer Engineering, Tel Aviv University, Tel Aviv 69978, Israel}

\affil[2]{Department of Mathematics, University of Missouri, Columbia, MO 65211, USA}

\maketitle

\begin{abstract}
We study estimation in the low signal-to-noise ratio (SNR) regime for a broad class of Gaussian latent-variable models, including 
Gaussian mixtures and orbit-recovery problems. 
We show that, in this regime, the generalized method-of-moments (GMoM) matches the first-order asymptotic efficiency of maximum likelihood. In particular, if the moment features are chosen up to the minimal local order required for identification and are weighted optimally, then the resulting GMoM estimator has the same leading asymptotic covariance as the maximum-likelihood estimator.
Our analysis shows that, in low SNR, this equivalence is governed by a layered local geometry: different directions become informative at different moment orders, partitioning the space into layers with distinct SNR scalings. We prove that the observed Fisher information and the GMoM information operator admit matching layerwise expansions across these layers.
As a consequence, in the low-SNR regime, GMoM provides a statistically efficient alternative to maximum likelihood, while preserving the computational advantages of moment-based estimation. 
\end{abstract}

\newpage
\tableofcontents

\newpage

\section{Introduction}
Gaussian latent-variable models provide a common framework for statistical inference from noisy observations whose distribution depends on the unknown signal through an unobserved label, transformation, or nuisance variable. They are central in statistics, signal processing, and machine learning~\cite{bishop2006pattern}, and include as special cases Gaussian mixture models~\cite{lindsay1995mixture,heinrich2018strong,fruhwirth2006finite, mclachlan2019finite}, and orbit-recovery models under compact group actions~\cite{bandeira2023estimation,romanov2021multi}. These models also arise in important scientific applications, notably in structural biology, where single-particle cryo-electron microscopy (cryo-EM) and cryo-electron tomography (cryo-ET) can be viewed as latent-variable inference problems with extremely low signal-to-noise ratio (SNR) \cite{bendory2020single}. In these low-SNR regimes, each observation carries only weak information about the underlying signal, making the latent label or transformation effectively unrecoverable at the single-sample level~\cite{aguerrebere2016fundamental,serov2026interplay}.

\paragraph{Likelihood-based versus moment-based estimation.}
This raises a basic statistical question: how efficiently can the underlying signal be estimated when the latent variables themselves cannot be reliably recovered? Maximum likelihood provides the classical benchmark for such questions in parametric estimation: under standard regularity conditions, the maximum-likelihood estimator (MLE) attains the Cram\'er-Rao lower bound determined by the Fisher information~\cite{cramer1999mathematical}. In latent-variable models, this benchmark is formulated in terms of the observed, or marginal, likelihood, obtained after integrating out the unobserved nuisance variable~\cite{dempster1977maximum,mclachlan2008algorithm}. This distinction is particularly important in the low-SNR regime: estimation must aggregate information across many observations while marginalizing over the latent structure, rather than relying on sample-by-sample recovery of the latent variables. However, in low SNR, likelihood-based analysis and optimization are often difficult: the marginal likelihood is typically analytically intractable, and numerical procedures may be computationally demanding and converge very slowly~\cite{balanov2026expectation}.
These difficulties motivate moment-based methods, which replace the full likelihood by empirical moments.
Such methods often lead to polynomial estimation problems that are more amenable to analysis and computation, and their relevant statistics can typically be computed in a single pass over the data, with dimension determined by the signal dimension and the chosen moment order rather than by the sample size~\cite{hsu2013learning,anandkumar2014tensor,abbe2018multireference,bendory2017bispectrum}. 

Within this class, the generalized method-of-moments\footnote{The generalized method of moments is often abbreviated as GMM; here, we use GMoM to avoid confusion with Gaussian mixture models.} (GMoM) is the classical moment-based framework, originating in econometrics: for a fixed collection of moment conditions, it matches model-implied moments to their empirical counterparts through a weighted quadratic criterion, and, with the optimal weighting, achieves the best asymptotic efficiency available within that class \cite{hansen1982large,newey1994large,hall2004generalized}. Its usual drawback relative to maximum likelihood is statistical: moment-based estimators are generally not asymptotically efficient
~\cite{van2000asymptotic,carrasco2014asymptotic}. 
The main result of this paper shows that this is not the case in the low-SNR regime for latent Gaussian models: once moments are taken up to the minimal locally informative order, the optimally weighted GMoM estimator achieves the same first-order asymptotic efficiency as maximum likelihood.


\subsection{Problem formulation}

In this work we study a unified Gaussian \emph{mean-scaling} framework that covers both finite-mixture models and continuous group-action models. Specifically, we observe i.i.d.\ samples $y_1,\ldots,y_n\in\mathbb{R}^d$ generated as
\begin{align}
    \label{eq:meanscaling-model}
    (\text{Gaussian latent-variable model})
    \qquad
    y_i = \beta\,A(Z_i)\theta^\star \;+\; \sigma\,\xi_i, 
\end{align}
where $\theta^\star\in\Theta\subseteq\mathbb{R}^m$ is the unknown parameter,  $\beta\ge 0$ is a signal-strength parameter, $\xi_i \stackrel{\text{i.i.d.}}{\sim}\mathcal{N}(0,I_d)$ is additive normal noise, $\sigma>0$ is a known noise level, and $Z_i\sim\mu$ is a latent variable taking values in a measurable space $\mathcal{Z}$ (possibly discrete, continuous, or hybrid). The map $A:\mathcal{Z}\to\mathbb{R}^{d\times m}$ is known and encodes the latent structure (see Table~\ref{tab:models} and Section~\ref{subsec:representative_models} for different applications); e.g., for Gaussian mixtures, $Z_i$ is a discrete label and $A(Z_i)$ selects the corresponding mean; for orbit recovery, $Z_i$ is a group element and $A(Z_i)$ is the corresponding (typically orthogonal) action on the ground-truth; and for mixture-orbit hybrids, $Z_i$ contains both a label and a transformation. 
Although we write $\xi_i\sim\mathcal{N}(0,I_d)$ for notational simplicity, all results extend verbatim to Gaussian noise with a known, nonsingular covariance matrix $\Sigma$ (see Remark~\ref{remark:general_covariance_matrix}); we focus on the isotropic case throughout to streamline the presentation.

Our goal is to estimate $\theta^\star$ from $\{y_i\}_{i=1}^n$. In general, however, $\theta^\star$ is identifiable only up to a model-intrinsic equivalence symmetry, so the proper inferential target is the equivalence class $[\theta^\star]$. For example, in a $K$-component Gaussian mixture model, the distribution is invariant under permutation of the component labels, while in orbit-recovery models the distribution is invariant under the corresponding group action \cite{bandeira2023estimation,fan2023likelihood}. Therefore, we formulate estimation and efficiency on the quotient space induced by this equivalence relation.

\paragraph{Moment identifiability and sample complexity.}
We focus on the low-SNR regime
\begin{align}
    \label{eq:snr-def-general-polished}
    \mathrm{SNR}\triangleq \frac{\beta^2}{\sigma^2}\to 0,
\end{align}
in which each observation carries vanishing direct information about the latent structure. Although the latent variable is effectively hidden at the level of individual samples, consistent estimation may still be possible by aggregating information across many observations. A central insight from recent works is that, in low SNR, distinguishability is governed by the low-order moment $r$ of the induced observation model that determines the equivalence class $[\theta^\star]$.
In particular, in the low-SNR regime, this global moment order governs sample complexity: to attain a fixed target accuracy, one needs $n\gtrsim \mathrm{SNR}^{-r}$ samples~\cite{bandeira2023estimation,katsevich2023likelihood}. 
In many important examples, including multi-reference alignment (MRA), orbit recovery, and several Gaussian mixture settings, this order often equals $2$ or $3$ for generic signals, e.g., \cite{hsu2013learning,anandkumar2014tensor,abbe2018multireference,bandeira2023estimation,bendory2022dihedral}.  
This explains when estimation is information-theoretically possible.
However, global identifiability alone does not determine the local geometry of estimation. In particular, it does not say whether moment-based and likelihood-based procedures have the same asymptotic efficiency in low SNR. Addressing this question is the main goal of the present work.

\subsection{Main results}
\label{subsec:main_contributions} 

We now summarize the main results of this work, which are presented in detail in Sections~\ref{sec:moments-structure}--\ref{sec:GMoM_efficiency}. The central result is statistical: in the low-SNR regime, an optimally weighted GMoM estimator attains the same first-order asymptotic efficiency as the MLE. The mechanism behind this equivalence is geometric: near the ground truth, each perturbation direction contributes to the Fisher information at a leading order determined by the first moment order that detects it. Consequently, once the GMoM criterion includes moments up to the minimal order needed for local identification, it captures the same leading local statistical information as the likelihood.

\subsubsection{Low-SNR efficiency of generalized method of moments}

Let $\mathcal{Y} \triangleq \{y_i\}_{i=1}^n$ denote the observed dataset, where
$y_1,\ldots,y_n$ are independent samples drawn according to model
\eqref{eq:meanscaling-model}. For $\theta\in\Theta$, we define the observed log-likelihood by
\begin{align}
    \label{eqn:logLikelihoodGeneral_efficiency}   \mathcal{L}_n(\theta;\mathcal{Y}) \triangleq \frac{1}{n}\sum_{i=1}^n \log p_{\theta,\beta}(y_i),
\end{align}
where $p_{\theta,\beta}$ is the observed-data density under the Gaussian latent-variable model~\eqref{eq:meanscaling-model}.
The MLE is the maximizer of the observed log-likelihood,
\begin{align}
    \label{eqn:mleGeneral_efficiency}    \widehat{\theta}_{\mathrm{MLE}}(\mathcal{Y}) \in \argmax_{\theta\in\Theta}\mathcal{L}_n(\theta;\mathcal{Y}).
\end{align}
The MLE serves as the classical benchmark for statistical efficiency, since, under standard regularity conditions, its asymptotic covariance attains the inverse observed Fisher information. Consequently, we quantify the statistical efficiency of GMoM by measuring the discrepancy between its information matrix and this likelihood-based benchmark; as this discrepancy vanishes, GMoM approaches the efficiency of maximum likelihood.

In this work, we consider GMoM estimators built from empirical moments of the observations up to a prescribed cutoff order $L_{\mathrm{mom}}$. Unlike maximum likelihood, which uses the full observed-data density, GMoM replaces the likelihood by a weighted discrepancy between empirical moments and their corresponding population values under the model:
\begin{align}
    \label{eqn:gmomGeneral_efficiency} \widehat{\theta}_{\mathrm{GMoM}}(\mathcal{Y}) \in \argmin_{\theta\in\Theta} \; \Bigl\{ \bigl(M_n(\mathcal{Y}) -M(\theta;\beta)\bigr)^\top \Omega_n \bigl(M_n(\mathcal{Y})-M(\theta;\beta)\bigr)\Bigr\},
\end{align}
where $M_n(\mathcal{Y})$ is the empirical moment vector built from the observations $\mathcal{Y}$, $M(\theta;\beta)$ is the corresponding population moment vector, and $\Omega_n$ is a weighting matrix. For a fixed moment cutoff $L_{\mathrm{mom}}\geq 1$, the vectors $M_n(\mathcal{Y})$ and $M(\theta;\beta)$ are obtained by stacking the empirical and population moments of orders $1,\dots,L_{\mathrm{mom}}$; see Section~\ref{subsec:empirical_features_main} for formal definition.

Because the model is identifiable only up to its intrinsic equivalence relation, local asymptotic estimation is naturally formulated only along directions transverse to the equivalence class $[\theta^\star]$. We denote this subspace by $W$, the normal space at $\theta^\star$. Directions in $W$ are precisely those that are statistically distinguishable from the ground truth, whereas tangent directions to $[\theta^\star]$ merely change the representative within the same equivalence class. For this reason, both the Fisher information and the GMoM information are restricted to $W$; see Definition~\ref{def:tangent_normal_pi}.

In addition, we define the local informative order $r_{\mathrm{loc}}$ as the smallest moment order $r$ for which the derivative of the stacked population moment map up to order $r$ is injective on the normal space at $\theta^\star$, namely,  on the directions transverse to the equivalence class; see Definition~\ref{def:rloc}. Thus, $r_{\mathrm{loc}}$ is a local, model-dependent quantity that specifies the lowest moment order at which all statistically identifiable infinitesimal perturbations of $\theta^\star$ are detected. 
By contrast, $L_{\mathrm{mom}}$ is a user-chosen cutoff that determines how many moment orders are included in the GMoM criterion. 

Informally, our results show that, for fixed sufficiently small SNR, both maximum likelihood and optimally weighted GMoM are asymptotically normal on the normal subspace $W$, and that their information matrices agree to leading order. Moreover, once the GMoM includes all moment features up to order $L_{\mathrm{mom}}$, the difference between the Fisher information and the GMoM information matrix appears only at order $\mathrm{SNR}^{L_{\mathrm{mom}}+1}$.

\begin{thm}[Informal statement: efficiency equivalence of MLE and GMoM in low SNR]
\label{thm:informal1}
Fix a sufficiently small $\mathrm{SNR}=\beta^2/\sigma^2$, and let $L_{\mathrm{mom}}\ge r_{\mathrm{loc}}$. Suppose that the GMoM estimator~\eqref{eqn:gmomGeneral_efficiency} is constructed from all empirical moments up to order $L_{\mathrm{mom}}$, and is weighted optimally by the inverse covariance of the corresponding empirical moment vector (see~\eqref{eqn:def_Sigma_L}). Then:
\begin{enumerate}
    \item \emph{(Asymptotic normality).} As $n\to\infty$, both the MLE and the optimally weighted GMoM estimator are asymptotically normal after projection onto the normal subspace $W$ of directions transverse to the equivalence class:
    \begin{align}
        \sqrt{n}\,\Pi_W\bigl(\widehat{\theta}_{\mathrm{MLE}}-\theta^\star\bigr)
        &\xrightarrow[]{\mathcal{D}} \mathcal{N}\!\left(0,\mathcal{I}_{\mathrm{obs}}^{(W)}(\theta^\star;\beta)^{-1}\right),\\ \sqrt{n}\,\Pi_W\bigl(\widehat{\theta}_{\mathrm{GMoM}}-\theta^\star\bigr)
        &\xrightarrow[]{\mathcal{D}} \mathcal{N}\!\left(0,\mathcal{I}_{\mathrm{GMoM}}^{(W)}(\theta^\star;\beta)^{-1}\right),
    \end{align}
    where $\Pi_W$ denotes the projection onto the normal subspace $W$, $\mathcal{I}_{\mathrm{obs}}^{(W)}(\theta^\star;\beta)$ denotes the observed Fisher information matrix restricted to $W$, as defined in~\eqref{eq:restricted_obs_info_positive_mle}, and $\mathcal{I}_{\mathrm{GMoM}}^{(W)}(\theta^\star;\beta)$ denotes the corresponding GMoM information matrix on $W$, as defined in~\eqref{eq:def_GMoM_information}. 

    \item \emph{(Higher-order Fisher-GMoM discrepancy).} The two matrices agree to leading order in low SNR, and their difference appears only at higher order:
    \begin{align}
        \mathcal{I}_{\mathrm{obs}}^{(W)}(\theta^\star;\beta) = \mathcal{I}_{\mathrm{GMoM}}^{(W)}(\theta^\star;\beta) + R_\beta,
        \qquad \|R_\beta\|_{\mathrm{op}} = O\left(\mathrm{SNR}^{\,L_{\mathrm{mom}}+1}\right),
    \end{align}
    where $R_\beta:W\to W$ is self-adjoint and positive semidefinite. 
\end{enumerate}
\end{thm}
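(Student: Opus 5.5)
Part 1 follows from classical M-estimation theory applied on the quotient; Part 2 is the real content, and I would prove it through a Hermite (cumulant) expansion of the score.

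\textbf{Part 1.} For each fixed small $\beta$, the MLE is an M-estimator and GMoM is a minimum-distance estimator. I would reparametrize locally by a slice transverse to $[\theta^\star]$, for instance $\theta^\star+W$ near $\theta^\star$. The slice is legitimate because the model is invariant along the tangent directions, so quotienting removes the degeneracy. On the slice I would verify the standard conditions: consistency, which comes from identifiability of $[\theta^\star]$ by moments up to order $L_{\mathrm{mom}}\ge r_{\mathrm{loc}}$; smoothness of $p_{\theta,\beta}$, which is automatic since the model is a Gaussian convolution; and nonsingularity of the information on $W$. Nonsingularity holds because $L_{\mathrm{mom}}\ge r_{\mathrm{loc}}$ makes the Jacobian $J=DM(\theta^\star;\beta)|_W$ injective. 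Then the usual theorems give the two limits. The MLE has $\mathcal{I}_{\mathrm{obs}}^{(W)}=\mathbb{E}[ss^\top]$ with score $s=\nabla_W\log p_{\theta,\beta}(y)$. Optimally weighted GMoM has $\mathcal{I}_{\mathrm{GMoM}}^{(W)}=J^\top\Sigma_L^{-1}J$.

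\textbf{Part 2, structure.} The key observation is that $\mathcal{I}_{\mathrm{GMoM}}^{(W)}$ is the Gram matrix of the $L^2(p_\theta)$-projection of the score onto the span of the centered moment features $\{m(y)-M\}$ of degree $\le L_{\mathrm{mom}}$. This uses $J=\mathrm{Cov}(m(y),s(y))$, which is differentiation under the integral. It follows immediately that $R_\beta=\mathbb{E}[(s-\Pi s)(s-\Pi s)^\top]\succeq0$ and is self-adjoint. So it remains to show that the residual of the score orthogonal to polynomials of degree $\le L_{\mathrm{mom}}$ has second moment $O(\mathrm{SNR}^{L_{\mathrm{mom}}+1})$.

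\textbf{Part 2, the bound.} Write $p_{\theta,\beta}(y)=\varphi_\sigma(y)\,\mathbb{E}_Z\exp\!\big(\beta\langle A(Z)\theta,y\rangle/\sigma^2-\beta^2\|A(Z)\theta\|^2/2\sigma^2\big)$ and expand in Hermite polynomials in $y/\sigma$. The degree-$k$ Hermite coefficient is $(\beta/\sigma)^k\,\mathbb{E}_Z[(A(Z)\theta)^{\otimes k}]$. Hence $p_\theta/\varphi=1+\sum_{k\ge1}\mathrm{SNR}^{k/2}\,\langle T_k(\theta),H_k\rangle$, where $T_k(\theta)=\mathbb{E}_Z[(A(Z)\theta)^{\otimes k}]$. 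Differentiating in a direction $w\in W$, the score is $s=\partial_w(p/\varphi)/(p/\varphi)$. Its Hermite components of degree $>L_{\mathrm{mom}}$ enter at size $\mathrm{SNR}^{(L_{\mathrm{mom}}+1)/2}$, either directly or through products generated by dividing by $p/\varphi=1+O(\sqrt{\mathrm{SNR}})$. Squaring gives $O(\mathrm{SNR}^{L_{\mathrm{mom}}+1})$ uniformly in unit $w$, with Hermite orthogonality under $\varphi$ and $p/\varphi=1+o(1)$ handling the change of measure.

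\textbf{Main obstacle.} The difficulty is the layered geometry. Directions in $W$ first detected at order $k$ carry information only of size $\mathrm{SNR}^{k}$, which can be much smaller than the lower-order contributions. A crude projection bound therefore mixes scales. In particular, the division by $p/\varphi$ produces degree-$>L_{\mathrm{mom}}$ terms coupled to lower-order tangent-like directions. I would handle this as follows:
\begin{enumerate}
\item Decompose $W=\bigoplus_k W_k$, where $W_k$ is the set of directions first detected at order $k$.
\item Rescale by $D_\beta=\mathrm{diag}(\mathrm{SNR}^{-k/2}I_{W_k})$.
\item Show that both $D_\beta\mathcal{I}_{\mathrm{obs}}^{(W)}D_\beta$ and $D_\beta\mathcal{I}_{\mathrm{GMoM}}^{(W)}D_\beta$ have the same nonsingular limit and admit matching layerwise expansions.
\item Verify that the operator-norm bound on $R_\beta$ survives un-rescaling.
\end{enumerate}
The un-rescaling check is where I expect most of the care to be needed. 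The argument also must confirm that the residual of $\partial_w$ for $w\in W_k$ is genuinely $O(\mathrm{SNR}^{(L_{\mathrm{mom}}+1)/2})$ rather than only relatively small. Controlling the remainder of the exponential series uniformly requires a bounded $\|A(Z)\theta\|$, or sufficient Gaussian moment assumptions, which I would take as standing hypotheses.
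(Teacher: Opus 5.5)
Your Part~2 is the paper's proof of Theorem~\ref{thm:H_fixed_beta_fisher_moment_identity}. The identity $DM(\theta^\star;\beta)h=\mathbb{E}[\psi(Y)S_{\theta^\star,\beta}(Y;h)]$ makes $\mathcal{I}^{(W)}_{\mathrm{GMoM}}$ the Gram form of the $L^2(p_{\theta^\star,\beta})$-projection of the score onto the centered features (Lemma~\ref{lem:score_projection_selected_moments}), which gives $R_\beta\succeq 0$. The residual is then bounded by best approximation against the degree-$\le L$ Taylor truncation in $t$ of the score (Lemma~\ref{lem:score_Hermite_truncation}), whose remainder is $O(t^{L+1}\|h\|)$ uniformly for bounded $h\in W$.

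Because that bound is absolute and uniform in $h$, the ``main obstacle'' you describe does not arise, and you should drop the rescaling by $D_\beta$. It would in fact be counterproductive. Un-rescaling only gives $\|R_\beta\|_{\mathrm{op}}\le\|D_\beta^{-1}\|_{\mathrm{op}}^{2}\,\|D_\beta R_\beta D_\beta\|_{\mathrm{op}}$. So knowing that the two rescaled operators share a limit yields merely $o(\mathrm{SNR})$ when the first layer is nontrivial, far from $O(\mathrm{SNR}^{L+1})$. In the paper the layerwise expansions (Proposition~\ref{prop:fisher_layer_block_asymptotics}) are proved separately. The GMoM block structure is then deduced \emph{from} the theorem (Corollary~\ref{cor:GMoM_layer_block_asymptotics}), not used to prove it.

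The genuine gap is in Part~1. The condition $L_{\mathrm{mom}}\ge r_{\mathrm{loc}}$ only makes $DT_{\le L}(\theta^\star)$ injective on $W$. Via the inverse function theorem on a transversal slice, this gives \emph{local} identifiability of $[\theta^\star]$, not the global identifiability your consistency step invokes. When $r_{\mathrm{loc}}\le L_{\mathrm{mom}}<r_{\mathrm{glob}}$, the global minimizer of $Q_n$ need not be consistent. In the paper's three-point example with $L=2$, one has $M(\theta;\beta)=M(-\theta;\beta)$, so $Q_n(\theta)=Q_n(-\theta)$ for every sample, although $-\theta^\star\not\sim\theta^\star$. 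You must either assume $L_{\mathrm{mom}}\ge r_{\mathrm{glob}}$, or take the minimizer over a compact neighborhood of $\theta^\star$ as in Proposition~\ref{prop:GMoM_model_consistency}(2). The linearization argument then goes through unchanged for that estimator.

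For the MLE, consistency comes from the likelihood itself, since $\theta\sim\theta^\star$ is defined by $p_{\theta,\beta}=p_{\theta^\star,\beta}$, and not from moments. On the positive side, your route gives positive-definiteness of $\mathcal{I}^{(W)}_{\mathrm{obs}}$ for free: $\mathcal{I}^{(W)}_{\mathrm{obs}}\succeq\mathcal{I}^{(W)}_{\mathrm{GMoM}}\succ 0$ once $J$ is injective and $\Sigma_L\succ 0$. The paper instead imposes this as Assumption~\ref{ass:mle_identifiable_nondegenerate}(2). Make the dependence of the MLE nondegeneracy on Part~2 explicit.
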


Figure~\ref{fig:1} illustrates the  difference between the Fisher information and the GMoM information matrix that decays with the predicted power of $\mathrm{SNR}$ and gains one additional power each time another moment order is included; see Section~\ref{subsec:population_information_validation} for further details about this experiment. See Proposition~\ref{prop:mle_quotient_efficiency}, Theorem~\ref{thm:GMoM_model_normality}, and Theorem~\ref{thm:H_fixed_beta_fisher_moment_identity} for the formal statements.

\begin{figure*}[t!]
    \centering
    \includegraphics[width=1.0 \linewidth]{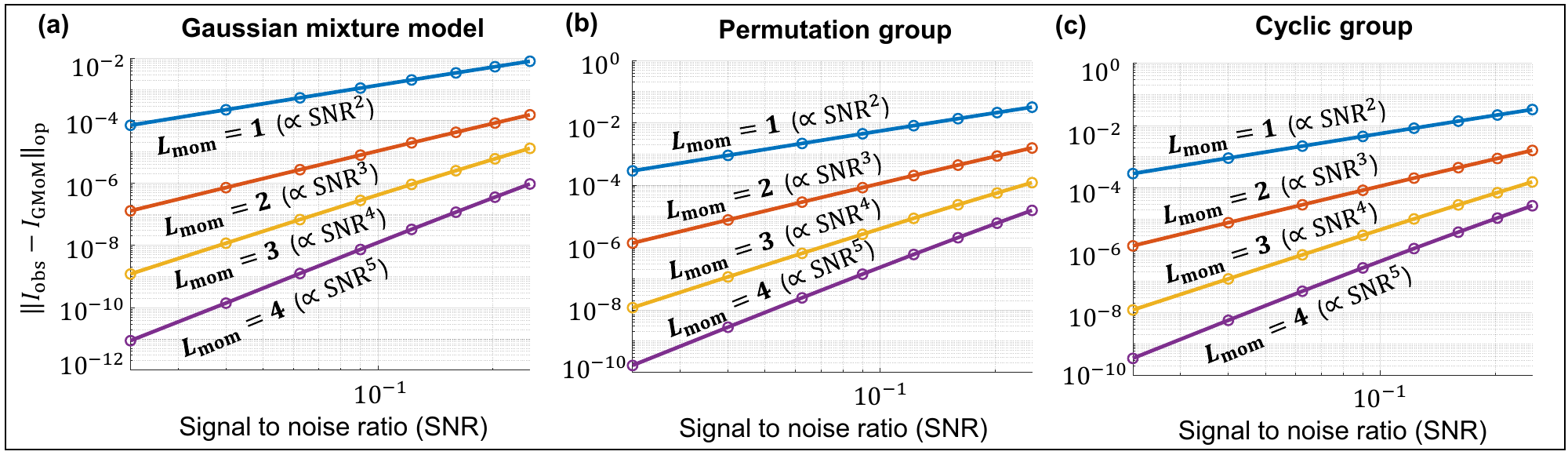}
    \caption{\textbf{Low-SNR Fisher-GMoM discrepancy across representative Gaussian latent-variable models.}
    Operator-norm error $\|\mathcal{I}_{\mathrm{obs}}^{(W)}(\theta^\star;\beta)-\mathcal{I}_{\mathrm{GMoM}}^{(W)}(\theta^\star;\beta)\|_{\mathrm{op}}$ as a function of the SNR, for (a) a Gaussian mixture model~\cite{hsu2013learning,anandkumar2014tensor}, (b) a permutation-group model~\cite{bandeira2023estimation}, and (c) a cyclic-group model~\cite{bendory2017bispectrum}. In each panel, the GMoM information operator is constructed using empirical moments up to order $L_{\mathrm{mom}}=1,2,3,4$. Across all three models, the discrepancy decays as predicted by the theory, $\|\mathcal{I}_{\mathrm{obs}}^{(W)}(\theta^\star;\beta)-\mathcal{I}_{\mathrm{GMoM}}^{(W)}(\theta^\star;\beta)\|_{\mathrm{op}} = O\left(\mathrm{SNR}^{L_{\mathrm{mom}}+1}\right)$, so that each additional moment order improves the approximation by one power of $\mathrm{SNR}$.}
    \label{fig:1} 
\end{figure*}

\subsubsection{Layered local geometry of the Fisher information}

The mechanism behind the preceding theorem is a layered local geometry induced by the differential moment maps. For the Gaussian latent-variable model studied here, the signal moments are $T_k(\theta)\triangleq \mathbb{E}_{Z\sim\mu}[(A(Z)\theta)^{\otimes k}]$ for $k\ge 1$. These tensors capture the signal-dependent part of the moment structure, with the additive Gaussian noise separated out. Their derivatives at the ground truth, $DT_k(\theta^\star)[h]$, describe how an infinitesimal perturbation $h$ changes the $k$-th signal moment. The first order at which this derivative is nonzero determines the low-SNR scale at which the perturbation $h$ first affects the local information.

Let 
\begin{align}
    V_k \triangleq  \{h\in W : DT_j(\theta^\star)[h]=0 \text{ for all } j<k\},
\end{align}
which, in turn, defines a decreasing filtration of the normal space $W = V_1 \supseteq V_2 \supseteq \cdots$.
Thus, $V_k$ consists of directions that are invisible to all differential moments of order strictly less than $k$.
The quotient $V_k/V_{k+1}$ therefore captures the genuinely new directions that become detectable at order $k$.
To represent these quotient layers by concrete linear subspaces, we define $U_k \triangleq  V_k \cap V_{k+1}^{\perp}$. Then, $V_k = U_k \oplus V_{k+1}$, and, when the filtration terminates at $r_{\mathrm{loc}}$, iteration yields the orthogonal decomposition $W = \bigoplus_{k=1}^{r_{\mathrm{loc}}} U_k$.
In this sense, $U_k$ is the $k$-th informative layer: it is a linear representative of the new directions that first become visible in the moment order $k$. In particular, every nonzero $h\in U_k$ satisfies
\begin{align}
    DT_j(\theta^\star)[h]=0 \quad \text{for all } j<k,
    \qquad \text{and} \quad DT_k(\theta^\star)[h]\neq 0.
\end{align}
Thus, different layers correspond to directions that emerge at different moment orders.

To describe the layered low-SNR geometry, we next analyze the bilinear form of the restricted observed Fisher information across the informative layers. Specifically, for $1\leq k,\ell\leq r_{\mathrm{loc}}$ and vectors $h\in U_k$, $g\in U_\ell$, we show that the bilinear form $\big\langle h,\mathcal{I}_{\mathrm{obs}}^{(W)}(\theta^\star;\beta)g\big\rangle$ admits a low-SNR expansion whose leading behavior reflects the moment orders at which the corresponding layers first become visible. Since $\mathcal{I}_{\mathrm{obs}}^{(W)}(\theta^\star;\beta)$ is self-adjoint and positive semidefinite, this bilinear form measures the information coupling between the two directions. The expansion shows that directions within the same layer interact at their leading SNR scale, whereas directions belonging to different layers are coupled only at higher order. Thus, in the low-SNR limit, the Fisher information is approximately block-diagonal with respect to the informative-layer decomposition, and its leading eigendirections align with these layers. The same layered structure holds for the restricted GMoM information matrix. See Proposition~\ref{prop:fisher_layer_block_asymptotics} and Corollary~\ref{cor:GMoM_layer_block_asymptotics} for the formal statements.

\begin{thm}[Informal statement: layered low-SNR geometry]
Fix a sufficiently small  $\mathrm{SNR} = \beta^2/\sigma^2$.
Then, for every $1\leq k,\ell\leq r_{\mathrm{loc}}$, every $h\in U_k$, and every $g\in U_\ell$,
\begin{align}
    \label{eq:block_fisher_layered_informal_snr}
    \big\langle h,\mathcal{I}_{\mathrm{obs}}^{(W)}(\theta^\star;\beta)g \big\rangle =
    \begin{cases}
        \displaystyle      \frac{\mathrm{SNR}^{k}}{k!}\,  \big\langle DT_k(\theta^\star)[h],DT_k(\theta^\star)[g]\big\rangle_F + O\bigl(\mathrm{SNR}^{k+1}\bigr),
        & k=\ell,\\[2ex]   O\bigl(\mathrm{SNR}^{(k+\ell+1)/2}\bigr),
        & |k-\ell|=1,\\[1.25ex]   O\bigl(\mathrm{SNR}^{(k+\ell+2)/2}\bigr),
        & |k-\ell|\geq 2.
    \end{cases}
\end{align}
An analogous expansion holds for the restricted GMoM information matrix. In particular, on each layer $U_k$, both operators have the same leading-order term, appearing at order $\mathrm{SNR}^k$.
\end{thm}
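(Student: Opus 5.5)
The plan is to derive the expansion from an exact series for the score in powers of $\beta/\sigma$, obtained by expanding the observed density around the pure-noise Gaussian. Write $p_{\theta,\beta}(y)=\phi_\sigma(y)\,\mathbb{E}_Z\exp\bigl(\tfrac{\beta}{\sigma^2}\langle y,A(Z)\theta\rangle-\tfrac{\beta^2}{2\sigma^2}\|A(Z)\theta\|^2\bigr)$. Expanding the exponential in Hermite tensors gives
\begin{align}
p_{\theta,\beta}(y)/\phi_\sigma(y)=\sum_{k\ge0}\frac{(\beta/\sigma)^k}{k!}\bigl\langle T_k(\theta),H_k(y/\sigma)\bigr\rangle_F ,
\end{align}
where $H_k$ is the $k$-th multivariate Hermite tensor, which is orthogonal in $L^2(\phi)$ with $\mathbb{E}\langle S,H_k\rangle\langle S',H_k\rangle=k!\langle S,S'\rangle_F$ for symmetric $S,S'$. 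Differentiating at $\theta^\star$ in the direction $h$ gives the score
\begin{align}
s_h(y)=\frac{\sum_k \frac{(\beta/\sigma)^k}{k!}\langle DT_k[h],H_k\rangle}{\sum_k\frac{(\beta/\sigma)^k}{k!}\langle T_k,H_k\rangle}.
\end{align}
The Fisher form is then $\langle h,\mathcal I g\rangle=\mathbb{E}_{p_{\theta^\star,\beta}}[s_hs_g]=\mathbb{E}_\phi[N_hN_g/D]$, where $N_h,N_g$ are the numerators and $D$ is the common denominator. Since $T_0=1$, we have $D=1+O(\beta/\sigma)$ uniformly on high-probability sets.

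To carry this out, first justify expanding $1/D=\sum_j(1-D)^j$ and integrating term by term. This needs a uniform bound on $A(Z)\theta^\star$ (compact latent support) and Gaussian tail control to handle the region where $|y|$ is large; I take these regularity assumptions as the ones stated in the paper. Next, for $h\in U_k$ the numerator starts at order $(\beta/\sigma)^k$, with leading term $\frac{(\beta/\sigma)^k}{k!}\langle DT_k[h],H_k\rangle$. The same holds for $g\in U_\ell$ at order $\ell$. The leading term of $\mathbb{E}_\phi[N_hN_g]$ is therefore $\frac{\mathrm{SNR}^{k}}{k!^2}\,k!\,\langle DT_k[h],DT_k[g]\rangle_F$ when $k=\ell$, which is the claimed diagonal term. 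Restriction to $W$ causes no trouble here: it only fixes the representative directions, and $DT_j$ vanishes on tangent directions.

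The off-diagonal claims come from a parity/degree count. Every term in the expansion of $\mathbb{E}_\phi[N_hN_g/D]$ is an expectation of a product of Hermite polynomials with total order $a$ in $\beta/\sigma$. Its total Hermite degree has the same parity as $a$, since each factor $\beta^j$ carries $H_j$. A product of Hermite polynomials whose total degree is odd has zero Gaussian mean, so only even total powers $a$ survive, i.e. integer powers of $\mathrm{SNR}$. Consider $k\neq\ell$ with $k<\ell$. The lowest possible order is $k+\ell$, coming from $\langle DT_k[h],H_k\rangle\langle DT_\ell[g],H_\ell\rangle$ at power $(\beta/\sigma)^{k+\ell}$, and this vanishes by orthogonality of different Hermite degrees. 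Terms involving $D$ start at order $k+\ell+1$. If $|k-\ell|=1$, then $k+\ell+1$ is even and can survive, giving $O(\mathrm{SNR}^{(k+\ell+1)/2})$. If $|k-\ell|\ge2$ and $k+\ell$ is even, the order-$(k+\ell+1)$ term is odd and vanishes, leaving $O(\mathrm{SNR}^{(k+\ell+2)/2})$. If $|k-\ell|\ge2$ and $k+\ell$ is odd, I must show the order-$(k+\ell+1)$ contributions vanish. They are either $\langle DT_k[h],H_k\rangle\langle DT_{\ell+1}[g],H_{\ell+1}\rangle$ with $k\neq\ell+1$, which is zero by orthogonality, or $\langle DT_{k+1}[h],H_{k+1}\rangle\langle DT_\ell[g],H_\ell\rangle$, zero since $k+1<\ell$. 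The remaining candidate is $-\langle DT_k[h],H_k\rangle\langle DT_\ell[g],H_\ell\rangle\langle T_1,H_1\rangle$, which is zero because a product of Hermite polynomials of degrees $k,\ell,1$ has zero mean when $\ell>k+1$ (triangle inequality on degrees). With the parity argument, this gives the bound $O(\mathrm{SNR}^{(k+\ell+2)/2})$. The diagonal remainder is $O(\mathrm{SNR}^{k+1})$ by the same parity count.

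For the GMoM information I would run the same computation with the score replaced by its $L^2(p_{\theta^\star,\beta})$-projection onto polynomials of degree at most $L_{\mathrm{mom}}$. The optimally weighted GMoM information equals $\mathbb{E}[\Pi s_h\,\Pi s_g]$, and its Hermite expansion agrees with that of the score through the relevant orders, so the analogous expansion follows. The main obstacle is the rigorous step of exchanging the series in $1/D$ with the Gaussian expectation, with remainders uniform in $h,g$ on the unit sphere. I would handle it with a truncated Taylor expansion in $\beta$ with an explicit integral remainder bounded by $\mathbb{E}_\phi[e^{c\beta|y|}|y|^{N}]$, which is finite and uniformly bounded for small $\beta$.
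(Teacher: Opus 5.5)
Your proposal is correct and follows essentially the same route as the paper. The shared steps are the Hermite expansion of the likelihood ratio, the score written as $N_h/D$ with $1/D$ expanded to finite order, and the Fisher form evaluated as a Gaussian expectation using chaos orthogonality and the degree/parity selection rules. The GMoM analogue likewise comes from the $L^2(p_{\theta^\star,\beta})$-projection of the score onto the Hermite features: its residual is $O((\beta/\sigma)^{L_{\mathrm{mom}}+1})$ by comparison with the truncated score, which is where $L_{\mathrm{mom}}\ge r_{\mathrm{loc}}$ enters and which you should state explicitly. Your parity bookkeeping (equivalently, the Fisher form is even in $\beta$ since $p_{\theta,-\beta}(y)=p_{\theta,\beta}(-y)$) removes the odd orders a bit more globally than the paper's term-by-term chaos check, but the argument is the same in substance.
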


\subsection{Related work}
\label{subsec:prior_work_efficiency_polished}
Maximum likelihood is the classical benchmark for asymptotic efficiency. By contrast, the standard notion of efficiency in the GMoM literature is conditional on the chosen moment conditions: among GMoM estimators constructed from a given set of valid moment conditions, the optimally weighted GMoM estimator, using the inverse of the asymptotic covariance matrix of the moments as the weighting matrix, is asymptotically efficient within that class~\cite{van2000asymptotic,hansen1982large,newey1994large,hall2004generalized,carrasco2014asymptotic}. However, this notion of optimality is internal to the selected moment family and does not address efficiency relative to the full likelihood. 

Several lines of prior work are related to this likelihood-versus-moment efficiency question, but do not resolve it. First, a substantial literature on orbit recovery, invariants, and related high-noise latent-variable models has shown that low-order moments govern global distinguishability and sample complexity. In particular, these works identify the minimal globally informative moment order and relate it to the sample size needed for estimation in the low-SNR regime; see for example, \cite{bandeira2023estimation}. These results explain when estimation is information-theoretically possible, but they do not address local asymptotic efficiency. 

Second, GMoM-type procedures have also been developed for related latent-variable inference problems~\cite{abas2022generalized,zhang2025diagonally}. However, these works do not establish an efficiency-equivalence result relative to the full likelihood, and such an equivalence is not automatic from classical GMoM theory.
In general, a finite collection of moment conditions need not span the likelihood score and therefore need not attain the Cramér-Rao bound of the full parametric model. Indeed, optimal GMoM attains the MLE efficiency bound precisely when the likelihood score belongs to the closed linear span of the selected moment functions~\cite[Proposition~4.1]{carrasco2014asymptotic}. Thus, at a fixed nonzero SNR, a finite-order moment estimator does not in general reproduce the full likelihood information. The key point of the present work is that the low-SNR regime has additional structure: the leading likelihood information is already captured by finitely many low-order moment conditions, provided these moments are chosen up to the locally informative order and are
weighted optimally.

Recent works have also clarified the low-SNR geometry of the likelihood itself. In Gaussian mixtures, \cite{katsevich2023likelihood} derives a low-SNR expansion of the population log-likelihood and shows that likelihood optimization reduces, order by order, to matching population moments. In orbit-recovery models, \cite{fan2023likelihood,fan2024maximum} analyze the high-noise likelihood landscape and the associated stratification of the Fisher information, relating both to low-order invariant and moment structure. These results illuminate the structure of the likelihood in the low-SNR regime, but they leave open the efficiency question addressed here: whether a moment-based estimator, constructed from a finite collection of low-order moments and weighted optimally, captures the same leading local statistical information as maximum likelihood on the identifiable quotient.

Our contribution is to establish precisely this efficiency equivalence in a broad Gaussian latent-variable framework. We formulate the local geometry through the differential moment maps $DT_j(\theta^\star)$, identify the corresponding informative layers on the normal space, and show that an optimally weighted GMoM estimator built from suitably normalized low-order empirical moments up to the minimal locally informative order matches the leading low-SNR asymptotic covariance of the MLE. Therefore, although GMoM is not generally statistically efficient, in the low-SNR regime, properly chosen and optimally weighted low-order moments recover the same leading local statistical information as the likelihood.

\section{Statistical model and geometric preliminaries}
\label{sec:preliminaries}

In this section, we introduce the general Gaussian latent-variable model studied throughout the paper, the associated quotient-space viewpoint dictated by observational symmetries, and the representative model classes covered by the framework.

\paragraph{Notation.} Throughout, $\xrightarrow[]{\mathcal{D}}$, $\xrightarrow[]{\mathbb{P}}$, $\xrightarrow[]{\mathrm{a.s.}}$, and $\xrightarrow[]{\mathcal{L}^p}$ denote convergence in distribution, in probability, almost surely, and in $\mathcal{L}^p$, respectively.
We write $\|\cdot\|$ for the Euclidean norm on vectors and $\|\cdot\|_F$ for the Frobenius norm on matrices. 
Let $V$ and $U$ be finite-dimensional normed vector spaces, and let $F:V\to U$ be continuously differentiable in a neighborhood of $\theta^\star\in V$. We denote by $DF(\theta^\star):V\to U$ the \emph{derivative} (or \emph{Jacobian}) of $F$ at $\theta^\star$, namely, the unique linear map satisfying
\begin{align}
    F(\theta^\star+h)-F(\theta^\star) = DF(\theta^\star)[h]+o(\|h\|),
    \qquad \|h\|\to 0.
\end{align}

\subsection{Gaussian latent-variable model}
\label{sec:meanscaling_model}

We begin by introducing the general latent-variable model studied in this work. The model is designed to cover both classical Gaussian mixtures (discrete latent labels) and orbit-recovery problems (continuous latent transformations) within a unified notation. Throughout, the parameter space is taken to be a smooth finite-dimensional manifold embedded in Euclidean space~\cite{absil2008optimization,lee2003smooth}.

\begin{model}[Mean-scaling Gaussian latent-variable model]
\label{model:meanscaling}
Let $\Theta \subseteq \mathbb{R}^m$ be a $C^2$ embedded submanifold of dimension $m$, and let $\theta^\star \in \Theta$ denote the unknown parameter. Fix a noise level $\sigma>0$ and a signal-strength parameter $\beta\ge 0$. Let $(\mathcal{Z},\mathcal{F},\mu)$ be a measurable space equipped with a known probability measure $\mu(dz)$, and let $A:\mathcal{Z}\to\mathbb{R}^{d\times m}$ be a known measurable map satisfying $\mathbb{E}_{Z\sim\mu} \|A(Z)\|_{\mathrm{op}}^2 < \infty$.
We observe i.i.d.\ samples $y_1,\ldots,y_n\in\mathbb{R}^d$ generated by
\begin{align}
    \label{eq:meanscaling_model_main}
    y_i \mid Z_i=z \sim \mathcal{N}\!\big(\beta\,A(z)\theta^\star,\ \sigma^2 I_d\big),
    \qquad
    Z_i \stackrel{\mathrm{i.i.d.}}{\sim}\mu.
\end{align}
or, equivalently, for $\xi_i \stackrel{\mathrm{i.i.d.}}{\sim}\mathcal{N}(0,I_d)$, and $\xi_i \perp Z_i$,
\begin{align}
    \label{eq:meanscaling_regression_form}
    y_i = \beta\,A(Z_i)\theta^\star + \sigma\,\xi_i.
\end{align}
\end{model}

\begin{remark}[Lower-dimensional parameter manifolds]
For simplicity of exposition, we formulate the main results in the case $\dim(\Theta)=m$, so that the parameter space is full-dimensional in $\mathbb{R}^m$. The same arguments extend, with routine modifications, to the case where $\Theta\subseteq\mathbb{R}^m$ is a lower-dimensional $C^2$ embedded submanifold. 
\end{remark}

\begin{remark}[Order of the large-sample and low-SNR limits]
\label{rem:order_limits_lowSNR}
In this work, the large-sample and low-SNR limits are taken sequentially. We first fix $\sigma$ and a small positive $\beta>0$ and study the regime $n\to\infty$. We then analyze the resulting information matrices in the limit $\beta \to 0$, while keeping $\sigma$ fixed. 
\end{remark}

The latent variable $Z_i$ may be discrete, continuous, or hybrid, with a known distribution $\mu$ that does not need to be uniform. The map $z\mapsto A(z)$ encodes the latent structure: for mixtures it selects a component label, while for orbit-recovery models it applies a known group action, such as shifts or rotations. Representative examples are listed in Table~\ref{tab:models}. We note that for a candidate parameter $\theta\in\Theta\subseteq\mathbb{R}^m$ from Model~\ref{model:meanscaling}, the observed (marginal) density of a single sample is 

\begin{align}
    \label{eqn:obsDensityGeneral}
    p_{\theta,\beta}(y) = \int_{\mathcal{Z}}\varphi_\sigma\!\big(y-\beta A(z)\theta\big)\,\mu(dz),
\end{align}
where $\varphi_\sigma$ denotes the $\mathcal{N}(0,\sigma^2 I_d)$ density.
The next assumption specifies the regularity conditions imposed on the map $A(\cdot)$.

\begin{assum}[Regularity and boundedness of the action $A(z)$]
\label{ass:integrability_max}
Assume that $\Theta\subset\mathbb{R}^m$ is compact. Assume further that the latent action is  bounded, i.e., there exists a constant $a_{\max}<\infty$ such that $\|A(z)\|_{\mathrm{op}} \le a_{\max}$, for a.e. $z\in\mathcal{Z}$. 
In addition, assume that the matrix
\begin{align}
    \label{eqn:I-positive}
    \mathcal{I} \;\triangleq\; \mathbb{E}_{Z\sim\mu}\big[A(Z)^\top A(Z)\big] \;\in\; \mathbb{R}^{m\times m}
\end{align}
is positive-definite, i.e.,\ $\mathcal{I}\succ 0$. 
\end{assum}

In the model classes considered in this work and in Table~\ref{tab:models}, the boundedness condition is immediate. For mixtures and finite latent spaces, $A(z)$ is a block-selection or finite collection of linear maps. For compact group-action models, $A(z)$ is  orthogonal, and in projection models $A(z)=\Pi\rho(z)$, where $\rho(z)$ is orthogonal and $\Pi$ is a fixed bounded linear operator. Hence $\|A(z)\|_{\mathrm{op}}$ is uniformly bounded in all these examples. The positive-definiteness of $\mathcal{I}=\mathbb{E}[A(Z)^\top A(Z)]$ is a nondegeneracy condition requiring that no nonzero parameter direction is completely unobserved.
\begin{remark}
\label{remark:general_covariance_matrix}
Although we write $\xi_i\sim\mathcal{N}(0,I_d)$ for simplicity, all results extend verbatim to the case of Gaussian noise with a known, nonsingular covariance matrix $\Sigma$. Indeed, multiplying the observations by $\Sigma^{-1/2}$ and defining $\tilde{y}_i = \Sigma^{-1/2} y_i$, with $\widetilde{A}(z)=\Sigma^{-1/2}A(z)$, yields the whitened model $\tilde{y}_i=\beta\,\widetilde{A}(Z_i)\theta^\star+\sigma\,\tilde{\xi}_i$, with $\tilde{\xi}_i\sim\mathcal{N}(0,I_d)$, which is again of the form \eqref{eq:meanscaling-model}, with a modified known map $\widetilde{A}(\cdot)$.
\end{remark}

\subsection{Observational equivalence and quotient geometry}

In general, the parameter $\theta^\star$ is not identifiable as a point of $\Theta$, but only up to a model-intrinsic observational equivalence relation. Accordingly, the statistical target is the equivalence class $[\theta^\star]$ rather than a specific representative.


\begin{definition}[Observational equivalence, equivalence class, and quotient space]
\label{def:equivalence_class_observational}
Fix $\beta>0$ and consider Model~\ref{model:meanscaling}. For each $\theta\in\Theta$, let $p_{\theta,\beta}$ denote the observed-data density in~\eqref{eqn:obsDensityGeneral}. We say that $\theta,\theta'\in\Theta$ are \emph{observationally equivalent}, and write $\theta\sim\theta'$, if
\begin{align}
    \theta\sim\theta'    \quad\Longleftrightarrow\quad
    p_{\theta,\beta}(y)=p_{\theta',\beta}(y)
    \quad\text{for a.e. } y\in\mathbb R^d .    \label{eq:obs_equivalence_density}
\end{align}
Equivalently, this holds if and only if the induced latent means have the same distribution, that is, 
\begin{align}
    A(Z)\theta \stackrel{d}{=} A(Z)\theta',
    \qquad Z\sim\mu .    \label{eq:obs_equivalence_latent_mean_law}
\end{align}
The equivalence class of $\theta$ is $[\theta]\triangleq\{\theta'\in\Theta:\theta'\sim\theta\}$, and the quotient space is
\begin{align}
    \Theta/{\sim}\triangleq\{[\theta]:\theta\in\Theta\},    \label{eq:quotient_space}
\end{align}
with canonical projection $\pi:\Theta\to\Theta/{\sim}$, $\pi(\theta)=[\theta]$.
\end{definition}

In discrete models, such as finite mixtures, the equivalent representatives are typically isolated, whereas in continuous group-action models they vary along a smooth family. We treat both cases through the quotient-space viewpoint. To formulate local asymptotic statements, we assume that the quotient admits a regular local manifold structure near $[\theta^\star]$~\cite{absil2008optimization,lee2003smooth}.

\begin{assum}[Local quotient manifold structure]
\label{ass:equiv_class_submanifold}
Fix $\theta^\star\in\Theta$. Assume that there exists an open neighborhood $U_\star\subset\Theta$ of $\theta^\star$ such that the quotient $U_\star/{\sim}$ admits a $C^2$ manifold structure for which the canonical projection $\pi:U_\star\to U_\star/{\sim}$, $\pi(\theta)=[\theta]$, is a $C^2$ submersion. 
\end{assum}

Under Assumption~\ref{ass:equiv_class_submanifold}, the equivalence class $[\theta^\star]$ is a $C^2$ embedded submanifold of $\Theta$ near $\theta^\star$, and its tangent space at $\theta^\star$ is therefore well defined. This tangent space describes the infinitesimal directions that remain within the same equivalence class and are thus locally non-identifiable. Its Euclidean orthogonal complement captures the identifiable local directions transverse to the equivalence class.
\begin{definition}[Tangent and normal space]
\label{def:tangent_normal_pi}
Fix $\theta^\star\in\Theta$ and assume Assumption~\ref{ass:equiv_class_submanifold}. Define
\begin{align}
    T(\theta^\star) \triangleq T_{\theta^\star}[\theta^\star] = \Bigl\{ \dot{\gamma}(0): \exists \varepsilon>0,\  \gamma:(-\varepsilon,\varepsilon)\to[\theta^\star]\text{ is }C^1,\ \gamma(0)=\theta^\star \Bigr\}.
    \label{eq:tangent_space_def}
\end{align}
Let
\begin{align}
    W \triangleq T(\theta^\star)^\perp = \{w\in\mathbb{R}^m:w^\top t=0 \ \text{for all } t\in T(\theta^\star)\}, \label{eq:direct_sum_T_W}
\end{align}
where orthogonality is with respect to the standard Euclidean inner product on $\mathbb{R}^m$. Then,  $\mathbb{R}^m = T(\theta^\star)\oplus W$.
We denote by $\Pi_W:\mathbb{R}^m\to W$ the Euclidean orthogonal projector onto $W$.
\end{definition}

The subspace $W$ is the \emph{normal space} at $\theta^\star$. It provides the local linear model for the quotient near $[\theta^\star]$: any sufficiently small perturbation of $\theta^\star$ decomposes into a tangential component in $T(\theta^\star)$, which changes only the representative inside the same class, and a normal component in $W$, which captures the identifiable local displacement. For this reason, all local asymptotic statements below are formulated on  the quotient $\Theta/{\sim}$. In Example~\ref{rem:quotient_geometry_examples}, we explain why the Assumption~\ref{ass:equiv_class_submanifold} holds for the representative models considered in this work, and we explicitly identify the associated tangent and normal spaces for both discrete models and continuous group-action models.

\paragraph{Loss modulo observational equivalence.}
Since the parameter is identifiable only up to observational equivalence, we measure the estimation error up to this equivalence relation. We define, for every $\theta_0, \theta_1 \in \Theta$, 
\begin{align}
    \label{eq:deq_pairwise}
    d_{\mathrm{eq}}(\theta_0,\theta_1) \triangleq \inf_{\theta_0'\in[\theta_0],\,\theta_1'\in[\theta_1]} \|\theta_0'-\theta_1'\|.
\end{align}
This quantity measures the Euclidean separation between the equivalence classes of $\theta_0$ and $\theta_1$. It satisfies $d_{\mathrm{eq}}(\theta_0,\theta_1)=0$ whenever $\theta_0\sim\theta_1$. If the equivalence classes are closed in $\Theta$, as in the finite- and compact-group action examples, then the converse also holds: $d_{\mathrm{eq}}(\theta_0,\theta_1)=0$ implies $\theta_0\sim\theta_1$. In most of the examples considered below, the equivalence relation is induced by a group $G$ acting on $\Theta$ by isometries, so that $[\theta]=\{g\cdot\theta:g\in G\}$. In this case, $d_{\mathrm{eq}}$ coincides with the standard orbit metric on the orbit space and can be written as
\begin{align}
    d_{\mathrm{eq}}(\theta_0,\theta_1) = \inf_{g\in G} \|\theta_0-g\cdot\theta_1\|.
\end{align}

\subsection{Representative model classes}
\label{subsec:representative_models}

Model~\ref{model:meanscaling} provides a unified Gaussian mean-scaling framework for a broad class of latent-variable models. Table~\ref{tab:models} lists representative examples, together with the latent space, the associated linear map $A(\cdot)$, and the induced observational equivalence relation. The examples illustrate that the same formalism covers finite latent classes, continuous latent group actions, projection models, and hybrid constructions within a single notation. 

\begin{table*}[t]
\centering
\begingroup
\setlength{\tabcolsep}{3.5pt}
\renewcommand{\arraystretch}{1.05}
\scriptsize

\caption{\textbf{Representative models covered by the mean-scaling Gaussian latent-variable framework.}
Each row specifies the latent space $\mathcal{Z}$, the parameter, the linear map $A(\cdot)$, and the intrinsic observational equivalence relation. We use $\mathcal{T}_\ell$ for circular shifts, $J$ for reflection, $\Pi$ for projection, $P_\pi$ for permutation matrices, and $\rho(g)$ for a linear representation of a group element $g$. In particular, if $R\in\mathrm{SO}(n)$ or $R\in\mathrm{SO}(3)$, then $R$ is a rotation and $\rho(R)$ is its induced action on the parameter space. For cryo-EM with translations, $S_t$ denotes the in-plane shift operator.}
\label{tab:models}

\begin{tabular}{|L{2.7cm}||L{1.80cm}|L{2.5cm}|L{2.65cm}|L{3.25cm}|L{2.35cm}|}
\hline
\textbf{Application} &
\textbf{Latent $Z$} &
\textbf{Parameter $\theta^\star$} &
\textbf{Linear map $A(Z)$} &
\textbf{Equivalence / symmetry ($[\theta^\star]$)} &
\textbf{Min.\ moments $r$ (typical)} \\
\hline\hline

\textbf{Spherical GMM} &
$\{1,\ldots,K\}$ &
$(\theta^\star_1,\ldots,\theta^\star_K)\in(\mathbb{R}^d)^K$ &
$A(z)\theta=\theta_z$ &
Label permutation $\theta\sim(\theta_{\pi(1)},\ldots,\theta_{\pi(K)})$ &
$K \geq 3:\; 3$ \cite{hsu2013learning,anandkumar2014tensor} $\qquad K = 2:\; 2$ \\
\hline

\textbf{Mixture of linear regressions $^\dagger$} &
$\{1,\ldots,K\}$ &
$(\theta^\star_1,\ldots,\theta^\star_K)\in(\mathbb{R}^d)^K$ &
$A(z;x)\theta=\langle x,\theta_z\rangle$  &
Label permutation $\theta\sim(\theta_{\pi(1)},\ldots,\theta_{\pi(K)})$ &
$K \geq 3:\; 3$ \cite{sedghi2016provable,chaganty2013spectral} $\qquad K = 2:\; 2$ \\
\hline

\textbf{Cyclic MRA} &
$\mathbb{Z}_d$ &
$\theta^\star\in\mathbb{R}^d$ &
$A(\ell)=\mathcal{T}_\ell$ &
$\mathbb{Z}_d\cdot\theta$ &
$3$ \cite{bendory2017bispectrum,bandeira2023estimation} \\
\hline

\textbf{Dihedral MRA} &
$D_{2d}$ &
$\theta^\star\in\mathbb{R}^d$ &
$A(g)\in\{\mathcal{T}_\ell,\mathcal{T}_\ell J\}$ &
$D_{2d}\cdot\theta$ &
$3$ \cite{bendory2022dihedral,edidin2025orbit} \\
\hline

\textbf{Projected MRA} &
$\mathbb{Z}_d$ &
$\theta^\star\in\mathbb{R}^d$, $d$ odd &
$A(\ell)=\Pi\mathcal{T}_\ell$ &
$D_{2d}\cdot\theta$ &
$3$ \cite{bandeira2023estimation,balanov2026projected}  
\\
\hline

\textbf{Coordinate sign-flip model} &
$\{\pm1\}^d$ &
$\theta^\star\in\mathbb{R}^d$ &
$A(s)=\mathrm{diag}(s)$ &
$\theta\sim s\odot\theta$ &
$2$ \\
\hline

\textbf{Permutation group} &
$S_d$ &
$\theta^\star\in\mathbb{R}^d$ &
$A(\pi)=P_\pi$ &
$\theta\sim P_\pi\theta$ &
$d$ \cite{bandeira2023estimation} \\
\hline

\textbf{Orbit recovery under $\mathrm{SO}(n)$} &
$\mathrm{SO}(n)$ &
bandlimited $n$-dimensional volume &
$A(R)=\rho(R)$ &
$\theta\sim\rho(R)\theta$ &
$3$ \cite{bandeira2023estimation,fan2024maximum,bendory2025orbit} \\
\hline

\textbf{$\mathbb{S}^2$ registration} &
$\mathrm{SO}(3)$ &
bandlimited $f^\star:\mathbb{S}^2\to\mathbb{R}$ &
$A(R)=\rho(R)$ &
$f\sim\rho(R)f$ &
$3$ \cite{bandeira2023estimation,fan2024maximum} \\
\hline

\textbf{Rigid motion} &
$\mathrm{SE}(n)$ &
bandlimited $n$-dimensional volume  &
$A(g) = g \cdot f$,  $g \in \mathrm{SE}(n)$ &
Global rigid motion &
$3$--$5$ \cite{balanov2025orbit} \\
\hline

\textbf{Cryo-EM without translations} &
$\mathrm{SO}(3)$ &
bandlimited 3D volume &
$A(R)=\Pi\rho(R)$ &
Global $\mathrm{O}(3)$ ambiguity &
$3$ \cite{bandeira2023estimation,fan2024maximum} \\
\hline

\textbf{Cryo-EM with translations} &
$\mathrm{SO}(3)\times\mathbb{R}^2$ &
bandlimited 3D volume &
$A(R,t)=S_t\Pi\rho(R)$ &
Global $\mathrm{E}(3)$ ambiguity &
Unknown \\
\hline

\textbf{Heterogeneous orbit recovery} &
$\{1,\ldots,K\}\times G$ &
$(\theta^\star_1,\ldots,\theta^\star_K)$ &
$A(c,g)\theta=\rho(g)\theta_c$ &
$\theta\sim(\rho(g)\theta_{\pi(1)},\ldots,\rho(g)\theta_{\pi(K)})$ &
Unknown \\
\hline
\end{tabular}

\vspace{2pt}
\parbox{\textwidth}{\scriptsize
$^\dagger$ The framework in Model~\ref{model:meanscaling} is written without observed covariates to simplify notation. Mixtures of linear regressions fit a straightforward covariate-dependent extension in which $A(z)$ is replaced by $A(z;x)$, under standard moment and nonsingularity assumptions on the covariate distribution. }

\endgroup
\end{table*}

\paragraph{Finite mixtures.}
Let $\mathcal{Z}=\{1,\ldots,K\}$, let $\mu$ be the uniform distribution on $\mathcal{Z}$, let $\theta=(\theta_1,\ldots,\theta_K)\in (\mathbb{R}^d)^K$, and define $A(z)\theta=\theta_z$. Then, for $Z \sim \mathrm{Unif}\{1,2, \ldots, K\}$,
\begin{align}
    Y=\beta \theta_Z+\sigma \xi.
\end{align}
This is the standard equal-weight spherical Gaussian mixture model with known covariance. The latent variable selects the active component, and the parameter is identifiable only up to permutation of the component labels:
$[\theta^\star]  =  \bigl\{(\theta^\star_{\pi(1)},\ldots,\theta^\star_{\pi(K)}):\pi\in S_K\bigr\}$.

\paragraph{Multi-reference alignment.}
Let $\theta^\star\in\mathbb{R}^d$, let $\mathcal{Z}=\mathbb{Z}_d$ with $\mu$ uniform, let $\mathcal{T}_\ell$ denote the circular-shift operator on $\mathbb{R}^d$, and set $A(\ell)=\mathcal{T}_\ell$. Then, 
\begin{align}
    Y=\beta \mathcal{T}_\ell \theta^\star+\sigma \xi.
\end{align}
This is the standard discrete MRA model, in which one observes many randomly shifted and highly noisy copies of an underlying signal \cite{bendory2017bispectrum,abbe2018multireference,bandeira2023estimation}. Since a global shift of the signal does not change the observation law, the parameter is identifiable only up to the orbit
$[\theta^\star] = \{\mathcal{T}_\ell\theta^\star:\ell\in\mathbb{Z}_d\}$.
MRA is a canonical orbit-recovery problem and a central test case for the low-SNR moment methods. 

\paragraph{Orbit recovery under compact group actions.}
Let $\mathcal{Z}=G$ be a compact group equipped with a uniform Haar measure $\mu$, let $\theta^\star\in\mathbb{R}^d$, let $\rho:G\to O(d)$ be a known orthogonal representation, and define $A(g)=\rho(g)$. Then, for $g \in G$,
\begin{align}
    Y=\beta \rho(g)\theta^\star+\sigma \xi.
\end{align}
The signal is identifiable only up to its orbit, $[\theta^\star] = \{\rho(g)\theta^\star:g\in G\}$.
This formulation includes discrete actions, such as cyclic and dihedral shifts (which are finite subgroups of $O(d)$), as well as continuous actions, such as rotations. 

\paragraph{Projection models.}
A further important class is obtained by composing a group action with a known projection operator. Let $\mathcal{Z}=G$ be a compact group with normalized Haar measure, let $\rho:G\to O(m)$ be a known representation acting on the signal space, and let $\Pi:\mathbb{R}^m\to\mathbb{R}^d$ be a known linear projection or observation operator. Defining $A(g)=\Pi\rho(g)$ gives
\begin{align}
    Y=\beta \Pi\rho(g)\theta^\star+\sigma \xi .
\end{align}
This model covers observation schemes in which the signal is first randomly transformed and then projected to a lower-dimensional measurement space.  A canonical example is cryo-EM without in-plane translations, where $G=\mathrm{SO}(3)$, $\theta^\star$ is a 3D volume in a finite basis, $\rho(R)$ rotates the volume, and $\Pi$ maps the rotated volume to a 2D projection image. The observational equivalence class is generally induced by global rotations and, in projection models such as cryo-EM, may also include a reflection or handedness ambiguity. For some projected orbit-recovery models, recent results show that  moments of the projected signal can be related to the corresponding full group-orbit moments under suitable structural assumptions~\cite{balanov2026group}. 

Overall, these examples illustrate three points that are important for the rest of the paper. First, the same Gaussian mean-scaling model includes both finite latent classes and continuous latent group actions. Second, the inferential target is naturally a quotient object, since the parameter is typically identifiable only up to symmetry. Third, despite their different appearances, these models all lead to the same low-SNR question studied here: which moments capture the leading local information, and can a moment-based estimator built from those moments attain the same asymptotic efficiency as maximum likelihood? 

\begin{remark}[On the relation between the latent space and the equivalence class]
\label{remark:relation_between_latent_and_equivalence}
By Definition~\ref{def:equivalence_class_observational}, two parameters $\theta \sim \theta'$ are equivalent if the random latent means $A(Z)\theta \stackrel{d}{=} A(Z)\theta'$ have the same distribution. Thus, the equivalence class $[\theta^\star]$ is determined by the distribution of the random measurement $A(Z)\theta^\star$, rather than by the set of latent variables $\mathcal{Z}$ alone.
In many models, the observational equivalence relation coincides with the symmetry generated by the latent action. For example, in cyclic  MRA, the latent variable is a cyclic shift and the parameter is identifiable up to a global cyclic shift. However, this coincidence does not generally hold. For example, in projected models, the projection may  introduce additional symmetries. In projected MRA, the
latent variable may consist only of cyclic shifts, while the projection makes the observation
law invariant also under reflection; consequently, the identifiable object is the dihedral
orbit rather than only the cyclic-shift orbit~\cite{bandeira2023estimation,balanov2026projected}. Similarly, in cryo-EM, the latent viewing direction lies in $\mathrm{SO}(3)$, but the projection model
introduces the usual handedness ambiguity, so the observational equivalence class contains both rotated and reflected copies of the underlying volume~\cite{bendory2020single}. Thus, $[\theta^\star]$ should always be understood as a property of the marginal model, rather than as a symmetry determined solely by the latent space.
\end{remark}

Next, we show that in all the examples considered in this work, the quotient-space formulation is natural and the local manifold structure required in Assumption~\ref{ass:equiv_class_submanifold} holds.

\begin{example}[Quotient geometry in representative examples]
\label{rem:quotient_geometry_examples}
Assumption~\ref{ass:equiv_class_submanifold} is satisfied in the representative model classes considered in Table~\ref{tab:models}:
\begin{enumerate}
    \item In finite-symmetry models, such as $K$-component Gaussian mixtures or cyclic/dihedral MRA, the equivalence class $[\theta^\star]$ is a finite set and hence a $0$-dimensional embedded smooth submanifold of $\Theta$. In this case the tangent space is trivial, $T(\theta^\star)=\{0\}$, and therefore the normal space is the whole ambient space, $W=\mathbb{R}^m$.

    \item In continuous group-action models, such as orbit recovery under $\mathrm{SO}(n)$, the equivalence class is the group orbit $[\theta^\star]=\{\rho(g)\theta^\star:g\in G\}$. This orbit is an embedded smooth submanifold and is diffeomorphic to the homogeneous space $G/\mathrm{Stab}(\theta^\star)$, where $\mathrm{Stab}(\theta^\star)\triangleq \{g\in G:\rho(g)\theta^\star=\theta^\star\}$ is the stabilizer of $\theta^\star$.
    Its tangent space is the tangent space to the orbit, $T(\theta^\star)=T_{\theta^\star}(G\cdot\theta^\star)$, namely, the span of the infinitesimal generators of the group action at $\theta^\star$, and the normal space
    $W=T(\theta^\star)^\perp$ collects the locally identifiable directions transverse to the orbit. 

    For example, in orbit recovery under $\mathrm{SO}(3)$, for a generic signal one has $\mathrm{Stab}(\theta^\star)=\{I\}$, so the orbit is $3$-dimensional and $\dim T(\theta^\star)=3$, with $\dim W=m-3$. More generally, $\dim T(\theta^\star)=3-\dim \mathrm{Stab}(\theta^\star)$, so nongeneric signals with rotational symmetries give lower-dimensional orbits.

    \item Heterogeneous models combine a finite symmetry with a compact Lie-group action; accordingly, $[\theta^\star]$ is a finite union of embedded smooth group orbits, so locally near $\theta^\star$ it is again an embedded smooth submanifold with well-defined tangent and normal spaces. 

    \item In projected models, such as projected MRA~\cite{bandeira2023estimation,balanov2026projected} and cryo-EM, the observational equivalence class may be strictly larger than the latent $\mathrm{SO}(3)$ orbit, since the projection model also introduces a handedness ambiguity, as described in Remark~\ref{remark:relation_between_latent_and_equivalence}.  Nevertheless, locally near a fixed representative $\theta^\star$, the  component of $[\theta^\star]$ containing $\theta^\star$ is an embedded smooth submanifold, so the tangent space $T(\theta^\star)$ and normal space $W=T(\theta^\star)^\perp$ are well defined.

\end{enumerate}

\end{example}

\section{Moment structure and low-SNR Fisher information}
\label{sec:moments-structure}

This section introduces the moment-based structure of the Fisher information matrix that underlies the low-SNR theory developed in the rest of the paper. 

\subsection{Maximum-likelihood estimation and Fisher information}
\label{subsec:likelihood}

Maximum likelihood provides the natural benchmark for  statistical efficiency in statistical models. Since the parameter is generally identifiable only up to the observational equivalence relation~$\sim$, this benchmark must be formulated on the quotient space $\Theta/{\sim}$, or equivalently on the subspace $W$ of directions transverse to the equivalence class introduced in Definition~\ref{def:tangent_normal_pi}. Thus, the role of maximum likelihood here is to characterize the best achievable local covariance along these directions, against which the GMoM estimator will later be compared.

In latent-variable models, the relevant information quantity for local efficiency is the \emph{observed} Fisher information, namely, the Fisher information of the marginal density $p_{\theta,\beta}$~\eqref{eqn:obsDensityGeneral} obtained after integrating out the latent variable. 

\begin{definition}[Observed Fisher information]
\label{def:I_obs_meanscaling}
For fixed $\beta$, the observed Fisher information matrix at $\theta\in\Theta$ is defined by
\begin{align}
    \mathcal{I}_{\mathrm{obs}}(\theta;\beta) \triangleq  \mathbb{E}_{Y\sim p_{\theta,\beta}}\!\left[\nabla_\theta \log p_{\theta,\beta}(Y)\, \nabla_\theta \log p_{\theta,\beta}(Y)^\top \right] \in \mathbb{R}^{m\times m},
    \label{eq:def_I_obs}
\end{align}
where $Y\sim p_{\theta,\beta}$ is distributed according to the observed model~\eqref{eqn:obsDensityGeneral}.
\end{definition}

Recall that $W=T(\theta^\star)^\perp$ is the normal space to the equivalence class at $\theta^\star$. We define the \emph{restricted observed Fisher information} as the self-adjoint operator $\mathcal{I}_{\mathrm{obs}}^{(W)}(\theta^\star;\beta):W\to W$ whose bilinear form is
\begin{align}
    \label{eq:restricted_obs_info_positive_mle}
    \big\langle h,\mathcal{I}_{\mathrm{obs}}^{(W)}(\theta^\star;\beta)g\big\rangle = h^\top \mathcal{I}_{\mathrm{obs}}(\theta^\star;\beta)g,
    \qquad h,g\in W.
\end{align}
Namely, this operator is the compression of $\mathcal{I}_{\mathrm{obs}}(\theta^\star;\beta)$ to the normal space $W$.

We next formulate the conditions under which maximum likelihood provides the appropriate consistency and local efficiency guarantees on the quotient. The assumption in the following has two roles. First, it requires the population likelihood to identify the equivalence class of the ground truth.
Second, it requires the observed Fisher information to be nondegenerate in the normal space $W$. Together with the regularity conditions in Assumption~\ref{ass:integrability_max} and the local quotient structure in Assumption~\ref{ass:equiv_class_submanifold}, these conditions yield the standard consistency and asymptotic efficiency of the MLE, formulated in normal coordinates on the quotient. The proof of Proposition~\ref{prop:mle_quotient_efficiency} is deferred to Appendix~\ref{app:proof_mle_quotient_efficiency}.

\begin{assum}[Quotient identifiability and first-order nondegeneracy]
\label{ass:mle_identifiable_nondegenerate}
Fix $\theta^\star\in\Theta$ and $\beta>0$. Assume that:
\begin{enumerate}
    \item The population log-likelihood criterion $\mathcal{L}(\theta;\theta^\star,\beta)\triangleq  \mathbb{E}_{Y\sim p_{\theta^\star,\beta}}\!\big[\log p_{\theta,\beta}(Y)\big]$
    is uniquely maximized on $\Theta$ at the equivalence class $[\theta^\star]$, i.e., 
    \begin{align}
        \mathcal{L}(\theta;\theta^\star,\beta) = \mathcal{L}(\theta^\star;\theta^\star,\beta)        \quad\Longleftrightarrow\quad \theta\sim\theta^\star,
        \qquad \theta\in\Theta.
    \end{align}

    \item The restricted observed Fisher information $\mathcal{I}_{\mathrm{obs}}^{(W)}(\theta^\star;\beta)$~\eqref{eq:restricted_obs_info_positive_mle} is positive-definite on $W$, that is $w^\top \mathcal{I}_{\mathrm{obs}}(\theta^\star;\beta) w>0$, for every $0\neq w\in W$.
\end{enumerate}
\end{assum}


\begin{proposition}[Consistency and asymptotic efficiency of the MLE on the quotient]
\label{prop:mle_quotient_efficiency}
Fix $\beta>0$. Assume Assumptions~\ref{ass:integrability_max}, \ref{ass:equiv_class_submanifold}, and~\ref{ass:mle_identifiable_nondegenerate}. Let $\widehat\theta_{\mathrm{MLE}}$ be any  maximizer of~\eqref{eqn:mleGeneral_efficiency}. Then:
\begin{enumerate}
    \item \emph{(Consistency on the quotient).} as $n \to \infty$,
    \begin{align}
        d_{\mathrm{eq}}\bigl(\widehat\theta_{\mathrm{MLE}},\theta^\star\bigr) \xrightarrow[]{\mathbb{P}}0.
    \end{align}

    \item \emph{(Asymptotic normality and efficiency).} there exists a choice of representatives $\widetilde{\theta}_{\mathrm{MLE}}\in[\widehat\theta_{\mathrm{MLE}}]$ such that $\widetilde{\theta}_{\mathrm{MLE}}\to\theta^\star$ in probability and
    \begin{align}
        \sqrt{n}\,\Pi_W\!\bigl(\widetilde{\theta}_{\mathrm{MLE}}-\theta^\star\bigr) \xrightarrow[]{\mathcal{D}} \mathcal{N}\!\bigl(0,\mathcal{I}_{\mathrm{obs}}^{(W)}(\theta^\star;\beta)^{-1}\bigr).
    \end{align}
\end{enumerate}
\end{proposition}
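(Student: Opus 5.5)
The plan is to follow the classical Wald/Cramér route for M-estimators, adapted to the quotient. It rests on three properties of the Gaussian mixture density $p_{\theta,\beta}(y)=\int\varphi_\sigma(y-\beta A(z)\theta)\,\mu(dz)$.

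\emph{Regularity of the density.} By Assumption~\ref{ass:integrability_max}, $\|A(z)\|_{\mathrm{op}}\le a_{\max}$ and $\Theta$ is compact. Hence $\log p_{\theta,\beta}(y)$ is smooth in $\theta$, and it and its first three $\theta$-derivatives are dominated by a polynomial envelope $C(1+\|y\|^{3})$, uniformly over $\Theta$. This follows by writing $\nabla_\theta\log p$ as a posterior expectation over $z$ of $\beta A(z)^\top(y-\beta A(z)\theta)/\sigma^2$, and using that the posterior is a probability measure. Gaussian tails then make these envelopes integrable.

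\emph{Consistency.} A uniform law of large numbers on compact $\Theta$ gives
\begin{align}
\sup_{\theta\in\Theta}\bigl|\mathcal{L}_n(\theta;\mathcal{Y})-\mathcal{L}(\theta;\theta^\star,\beta)\bigr|\xrightarrow[]{\mathrm{a.s.}}0 .
\end{align}
It uses continuity in $\theta$ together with the integrable envelope $|\log p_{\theta,\beta}(y)|\le C(1+\|y\|^2)$. The population criterion is continuous and, by Assumption~\ref{ass:mle_identifiable_nondegenerate}(1), maximized exactly on $[\theta^\star]$, which is closed as the preimage of the maximum value. By compactness, for every $\varepsilon>0$,
\begin{align}
\sup\{\mathcal{L}(\theta;\theta^\star,\beta): d_{\mathrm{eq}}(\theta,\theta^\star)\ge\varepsilon\}<\mathcal{L}(\theta^\star;\theta^\star,\beta).
\end{align}
The usual Wald argument then yields $d_{\mathrm{eq}}(\widehat\theta_{\mathrm{MLE}},\theta^\star)\to0$ in probability. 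A small point needs checking: $d_{\mathrm{eq}}(\theta,\theta^\star)\ge\varepsilon$ defines a closed set. This holds because $d_{\mathrm{eq}}(\cdot,\theta^\star)$ is the distance to the closed set $[\theta^\star]$, since $\sim$ is closed under the definition of $d_{\mathrm{eq}}$ with $\theta_0'$ ranging over $[\theta^\star]$ and $\theta_1'=\theta$ up to equivalence.

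\emph{Choice of representatives.} Consistency on the quotient yields $\theta'\in[\widehat\theta_{\mathrm{MLE}}]$ within $o_P(1)$ of some point of $[\theta^\star]$. Equivalence classes are closed and the model is invariant under $\sim$, so I will take a representative nearest to the component of $[\theta^\star]$ through $\theta^\star$. In the finite case, I pick the element of the finite class closest to $\theta^\star$. In the continuous case, I use Assumption~\ref{ass:equiv_class_submanifold}: near $\theta^\star$ the map $\pi$ is a $C^2$ submersion, so there is a local slice $S=\theta^\star+W\cap B_\delta$ on which $\pi$ restricts to a $C^2$ diffeomorphism onto a neighborhood of $[\theta^\star]$ in the quotient. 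This step is where I expect the main obstacle. It needs the classes near $\theta^\star$ to be transported to $\theta^\star$'s vicinity, for example by the group action (isometries) in the orbit examples. In the general quotient setting it needs the slice property to follow from the submersion structure, namely that every class close to $[\theta^\star]$ meets $S$ near $\theta^\star$. I will argue this via the local-section theorem for submersions, after moving the near point of $[\theta^\star]$ to $\theta^\star$. This gives $\widetilde\theta_{\mathrm{MLE}}\in S$ with $\widetilde\theta_{\mathrm{MLE}}\to\theta^\star$ in probability, and $\widetilde\theta_{\mathrm{MLE}}$ still maximizes $\mathcal{L}_n$ because $\mathcal{L}_n$ is constant on classes.

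\emph{Asymptotic normality on the slice.} Parametrize the slice by $w\mapsto\theta^\star+w$ for $w\in W$ and set $\ell_n(w)=\mathcal{L}_n(\theta^\star+w)$. Since $\widetilde w=\Pi_W(\widetilde\theta_{\mathrm{MLE}}-\theta^\star)$ is an interior maximizer with probability tending to one, $\nabla_W\ell_n(\widetilde w)=0$. A Taylor expansion gives
\begin{align}
0=\Pi_W\nabla\mathcal{L}_n(\theta^\star)+\Pi_W\nabla^2\mathcal{L}_n(\bar\theta)\Pi_W\,\widetilde w .
\end{align}
The score term satisfies the CLT $\sqrt n\,\Pi_W\nabla\mathcal{L}_n(\theta^\star)\xrightarrow[]{\mathcal{D}}\mathcal{N}(0,\mathcal{I}_{\mathrm{obs}}^{(W)})$, since the score has mean zero and covariance $\mathcal{I}_{\mathrm{obs}}$. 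For the Hessian term, the uniform LLN with the third-derivative envelope gives $\Pi_W\nabla^2\mathcal{L}_n(\bar\theta)\Pi_W\to-\mathcal{I}_{\mathrm{obs}}^{(W)}$ in probability, via the information identity justified by dominated differentiation under the integral. Invertibility comes from Assumption~\ref{ass:mle_identifiable_nondegenerate}(2). Slutsky's lemma then gives the stated limit $\mathcal{N}(0,\mathcal{I}_{\mathrm{obs}}^{(W)}(\theta^\star;\beta)^{-1})$.
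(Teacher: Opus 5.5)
Your route is the paper's route. You get consistency from a uniform LLN plus unique maximization on $[\theta^\star]$, then parametrize transversally by $W$ and apply local asymptotic normality with information $\mathcal{I}_{\mathrm{obs}}^{(W)}$. The paper gets the last step from differentiability in quadratic mean with a general $C^2$ transversal $\Psi$, which forces its extra step $\Pi_W(\Psi(w)-\theta^\star)=w+O(\|w\|^2)$. Your affine slice with a Cramér expansion and a third-derivative envelope is an equally valid and more self-contained variant, and it makes $\widetilde\theta_{\mathrm{MLE}}-\theta^\star\in W$ exact. One small correction in the consistency step: $d_{\mathrm{eq}}(\theta,\theta^\star)=\inf_{\theta'\sim\theta}\mathrm{dist}(\theta',[\theta^\star])$ is in general only bounded above by $\mathrm{dist}(\theta,[\theta^\star])$; equality holds for isometric group actions, not in general. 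So run Wald's argument on the compact set $\{\theta:\mathrm{dist}(\theta,[\theta^\star])\ge\varepsilon\}$. This gives $\mathrm{dist}(\widehat\theta_{\mathrm{MLE}},[\theta^\star])\to0$ in probability, which implies the claim.

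The step you flag as the main obstacle is a genuine gap, and it cannot be closed from the stated assumptions. The paper's proof has the same hole: it disposes of this step in one line, ``by the quotient consistency we may therefore choose representatives $\widetilde\theta_{\mathrm{MLE}}=\Psi(\widehat w_n)$''.

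\begin{itemize}
\item Consistency only places $\widehat\theta_{\mathrm{MLE}}$ near \emph{some} $\bar\theta\in[\theta^\star]$. Assumption~\ref{ass:equiv_class_submanifold}, by contrast, is purely local at $\theta^\star$.
\item The local-section theorem for $\pi$ on $U_\star$ says that classes \emph{meeting} a small neighbourhood of $\theta^\star$ meet your slice. Nothing forces a class passing near $\bar\theta$ to enter that neighbourhood.
\item ``Moving $\bar\theta$ to $\theta^\star$'' needs class-preserving maps acting transitively on $[\theta^\star]$, for example a group acting on $\Theta$ by isometries whose orbits are the classes. That holds in the models of Table~\ref{tab:models}.
\item Your finite case has the same problem: the element of $[\widehat\theta_{\mathrm{MLE}}]$ closest to $\theta^\star$ need not be close to it.
\end{itemize}

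Without such homogeneity the conclusion actually fails. Take $d=1$, $m=2$, $\Theta$ a large closed disc, $\mu$ uniform on three points with $A(Z)\in\{(1,2),(2,3),(3,1)\}$, and $\theta^\star=(1,0)$.

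\begin{itemize}
\item Assumptions~\ref{ass:integrability_max}, \ref{ass:equiv_class_submanifold} and~\ref{ass:mle_identifiable_nondegenerate} all hold, with $W=\mathbb{R}^2$. In particular, no two distinct points near $\theta^\star$ are equivalent.
\item $(0,1)\sim\theta^\star$, since both give the uniform law on $\{1,2,3\}$.
\item A point $(\delta,1+\eta)$ near $(0,1)$ is equivalent to a point near $\theta^\star$ only if $\delta=0$. Matching the three atoms near $1,2,3$ gives three linear equations in two unknowns.
\item The empirical likelihood has competing local maxima near $(1,0)$ and $(0,1)$. The two local gains are $\tfrac12$ times squared norms of projections of the same Gaussian score vector onto two different planes in $\mathbb{R}^3$. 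Hence each maximum is the global one with asymptotically positive probability.
\end{itemize}

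So, with probability bounded away from zero, no representative of $[\widehat\theta_{\mathrm{MLE}}]$ lies near $\theta^\star$. The fix is to add the homogeneity hypothesis explicitly rather than try to derive it from the submersion structure. With it, your translation argument closes the step and the rest of your proof goes through.
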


\subsection{Global and local informative moment orders}

We next introduce the moment quantities that organize the low-SNR analysis throughout the paper. Moments play two distinct roles. On the one hand, they govern \emph{global distinguishability}: they determine when two nonequivalent parameters can be separated at the population level. This notion underlies prior work on moment identifiability and low-SNR sample complexity in latent-variable models, e.g.,~\cite{bandeira2023estimation,perry2019sample}. On the other hand,  they govern \emph{local sensitivity}, which is the main focus of this work: they determine when infinitesimal identifiable perturbations become visible to first order, and therefore control the Fisher geometry and asymptotic efficiency theory developed here. These two roles lead to two corresponding notions of informative moment order.

\begin{definition}[Order-$k$ tensor moment]
\label{def:tensor_moment}
Let $Y\in\mathbb{R}^d$ satisfy $\mathbb{E}\|Y\|_2^k<\infty$. Its order-$k$ tensor moment is $\mathbb{E}[Y^{\otimes k}]\in(\mathbb{R}^d)^{\otimes k}$, with entries
\begin{align}
    \label{eq:tensor_moment_entries}
    \big(\mathbb{E}[Y^{\otimes k}]\big)_{i_1,\ldots,i_k} = \mathbb{E}[Y_{i_1}\cdots Y_{i_k}].
\end{align}
\end{definition}

In Model~\ref{model:meanscaling}, the observation takes the form $Y=\beta A(Z)\theta+\sigma \xi$. Accordingly, the observed moments contain both signal and Gaussian noise contributions. Since the low-SNR structure of interest is driven by the latent mean $A(Z)\theta$, it is natural to isolate the corresponding signal moments.

\begin{definition}[Signal population moments]
\label{def:signal_population_moment_tensor}
For each $k\ge 1$, we define the order-$k$ signal moment tensor by
\begin{align}
    \label{eq:def_Tk_signal}
    T_k(\theta) \triangleq \mathbb{E}_{Z\sim\mu}\!\big[(A(Z)\theta)^{\otimes k}\big] \in (\mathbb{R}^d)^{\otimes k}.
\end{align}
We also define the \emph{stacked signal moment map up to order $k$} by
\begin{align}
    T_{\le k}(\theta) \triangleq \bigl(T_1(\theta),\ldots,T_k(\theta)\bigr).
    \label{eq:def_T_le_k}
\end{align}
That is, $T_{\le k}(\theta)$ is obtained by stacking together the signal moment tensors of orders $1,\dots,k$ into a single block vector.
\end{definition}

The first notion of informativeness is global: it asks at which order the signal moments determine the equivalence class of the parameter at the population level. In many models of interest, this property holds not uniformly over all signals, but for all signals outside an exceptional set. We make this genericity notion precise as follows.

\begin{definition}[Generic signal]
\label{def:generic_signal}
A property $P(\theta)$ is said to hold for a \emph{generic signal} in $\Theta$ if there exists an exceptional set $\Theta_{\mathrm{exc}}\subseteq \Theta$ of measure zero such that $P(\theta)$ holds for every $\theta\in \Theta\setminus \Theta_{\mathrm{exc}}$. In the algebraic settings considered in this work, the exceptional set can typically be taken to be a proper algebraic subset of $\Theta$.
\end{definition}

The exceptional set in Definition~\ref{def:generic_signal} generally depends on the property under consideration, and different notions of genericity arise naturally in the models studied here. At the most intrinsic level, one excludes signals with nontrivial stabilizer under the latent group action, since such symmetries create degeneracies in the quotient geometry. Beyond this, concrete reconstruction algorithms often impose still stronger nondegeneracy assumptions tailored to the inversion procedure. For instance, in cyclic MRA, many constructive inversion arguments assume that the discrete Fourier coefficients are nonvanishing~\cite{bendory2017bispectrum}, whereas in orbit-recovery problems under $\mathrm{SO}(3)$ one often assumes invertibility or full-rank conditions for certain harmonic coupling systems~\cite{bendory2025orbit}. These  assumptions are typically sufficient for a particular recovery method, but they should be distinguished from the broader intrinsic notion of genericity used in the present work. Nevertheless, in all these examples the excluded set remains a measure-zero subset of the parameter space, and in the algebraic settings considered here it is typically a proper algebraic subset.

\begin{definition}[Generic global informative moment order]
\label{def:rglob}
We say that the model is \emph{globally moment-identifiable of order $k$ for a generic signal} if there exists an exceptional set $\Theta_{\mathrm{exc}}\subseteq \Theta$ of measure zero such that, for every $\theta^\star\in \Theta\setminus\Theta_{\mathrm{exc}}$,
\begin{align}
    \label{eq:global_ident_k}
    T_{\le k}(\theta)=T_{\le k}(\theta^\star) \Longrightarrow 
    \theta \sim \theta^\star,
    \qquad \forall \theta\in\Theta.
\end{align}
The corresponding generic global minimal informative moment order is
\begin{align}
    \label{eq:rglob_def}
    r_{\mathrm{glob}} \triangleq \min \bigl\{k\ge 1:\eqref{eq:global_ident_k}\text{ holds for generic }\theta^\star\bigr\} \in \{1,2,\ldots\}\cup\{\infty\}.
\end{align}
\end{definition}

Thus, $r_{\mathrm{glob}}$ is the smallest moment order needed to identify the equivalence class of a generic signal from its population moments. For local asymptotic theory, however, the relevant notion is local rather than global. Asymptotic efficiency is governed by the first order at which the derivative of the moment map at the ground-truth, $DT_r(\theta^\star)$, detects all infinitesimal identifiable perturbations, namely, by injectivity of the differential moment map on the normal space at the ground truth.

\begin{definition}[Local informative moment order]
\label{def:rloc}
Fix $\theta^\star\in\Theta$, and for each $h\in W\setminus\{0\}$ define the first order at which the differential moment map detects $h$,
\begin{align}
    r(h)\triangleq \min\{j\ge 1:\ DT_j(\theta^\star)[h]\neq 0\},  \label{eqn:def_r_h}
\end{align}
with the convention $r(h)=\infty$ if no such $j$ exists. The \emph{local informative moment order} at $\theta^\star$ is \begin{align}
    \label{eq:rloc_def}
    r_{\mathrm{loc}}(\theta^\star) \triangleq \max_{h\in W\setminus\{0\}} r(h) \in \{1,2,\ldots\}\cup\{\infty\}.
\end{align}
\end{definition}


\paragraph{Relation between $r_{\mathrm{glob}}$ and $r_{\mathrm{loc}}$.}
The distinction between these two quantities is fundamental. The global order $r_{\mathrm{glob}}$ is a property of the \emph{values} of the moment map: it asks at which order the population moments determine the equivalence class of the parameter. By contrast, the local order $r_{\mathrm{loc}}(\theta^\star)$ is a property of the \emph{differential moment maps} at $\theta^\star$: it asks by which order every nonzero identifiable direction $h\in W$ is detected to first order.
Because the moments are polynomial functions, we have the following basic relation between
$r_{\mathrm{loc}}$ and $r_{\mathrm{glob}}$. The proof of the statement is deferred to Appendix~\ref{sec:proof_generic_rloc_rglob}.

\begin{proposition}
[The relation between  local and global informative orders]
\label{prop:generic_rloc_rglob}
There exists an exceptional set $\Theta_{\mathrm{exc}}\subseteq\Theta$ of measure zero such that, for every $\theta^\star\in\Theta\setminus\Theta_{\mathrm{exc}}$,
\begin{align}
    \label{eq:generic_equality}
    r_{\mathrm{loc}}(\theta^\star) \leq r_{\mathrm{glob}}.
\end{align}
\end{proposition}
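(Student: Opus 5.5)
We need to show that, for generic $\theta^\star$, every nonzero $h\in W$ has $DT_j(\theta^\star)[h]\neq 0$ for some $j\le r_{\mathrm{glob}}$. Writing $r\triangleq r_{\mathrm{glob}}$ (assumed finite; otherwise the claim is vacuous), this is equivalent to
\begin{align}
    \ker DT_{\le r}(\theta^\star)\cap W=\{0\},
    \qquad\text{i.e.}\qquad
    \ker DT_{\le r}(\theta^\star)\subseteq T(\theta^\star).
\end{align}
The easy inclusion $T(\theta^\star)\subseteq \ker DT_{\le r}(\theta^\star)$ always holds. Indeed, $T_{\le r}$ is constant on each equivalence class, since $A(Z)\theta\stackrel{d}{=}A(Z)\theta'$ forces equal moments. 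Differentiating along a $C^1$ curve $\gamma$ inside $[\theta^\star]$ then gives $DT_{\le r}(\theta^\star)[\dot\gamma(0)]=0$. Hence it suffices to show that, generically,
\begin{align}
    \dim\ker DT_{\le r}(\theta^\star)=\dim T(\theta^\star),
    \qquad\text{equivalently}\qquad
    \operatorname{rank} DT_{\le r}(\theta^\star)=m-\dim[\theta^\star].
\end{align}

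The plan is a generic-rank argument for the polynomial map $F\triangleq T_{\le r}:\Theta\to\mathbb{R}^N$.

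\emph{Step 1.} Let $\rho_{\max}$ be the maximal rank of $DF$ over $\Theta$. The set where the rank is below $\rho_{\max}$ is cut out by the vanishing of all $\rho_{\max}$-minors. These minors are polynomials in $\theta$ that are not identically zero, so this set is a proper algebraic subset and has measure zero. Put it into $\Theta_{\mathrm{exc}}$. On the complement, an open dense set $\Theta_0$, the rank is constant. By the constant rank theorem, near each $\theta\in\Theta_0$ the level set $F^{-1}(F(\theta))$ is locally a $C^1$ submanifold of dimension $m-\rho_{\max}$, with tangent space $\ker DF(\theta)$.

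\emph{Step 2.} Compare this level set with the equivalence class. By global identifiability (Definition~\ref{def:rglob}), after also discarding the measure-zero exceptional set of that definition, we have $F^{-1}(F(\theta^\star))=[\theta^\star]$ for $\theta^\star\in\Theta_0\setminus\Theta_{\mathrm{exc}}$. The level set near $\theta^\star$ is therefore exactly $[\theta^\star]$ near $\theta^\star$. Its tangent space $\ker DF(\theta^\star)$ must then coincide with $T(\theta^\star)$, where we use Assumption~\ref{ass:equiv_class_submanifold} to identify $[\theta^\star]$ as an embedded $C^2$ submanifold with a well-defined tangent space. Two embedded submanifolds that agree as sets near a point have the same tangent space there. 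This gives $\ker DF(\theta^\star)\cap W=\{0\}$, so every $h\in W\setminus\{0\}$ has $r(h)\le r$, and hence $r_{\mathrm{loc}}(\theta^\star)\le r_{\mathrm{glob}}$.

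\emph{Main obstacle.} The delicate point is that Assumption~\ref{ass:equiv_class_submanifold} is stated at a fixed $\theta^\star$, whereas Step 2 needs it for generic $\theta^\star$. I would handle this by adding to $\Theta_{\mathrm{exc}}$ the measure-zero set where the local quotient structure fails. If one prefers not to rely on that, there is a fallback. The constant rank theorem already shows that the fiber near $\theta^\star$ is a manifold of dimension $m-\rho_{\max}$. Since this fiber equals $[\theta^\star]$ as a set, the class itself is locally a manifold with tangent space $\ker DF(\theta^\star)$. So $T(\theta^\star)$, defined via $C^1$ curves in $[\theta^\star]$, equals $\ker DF(\theta^\star)$ directly: every kernel vector is tangent to some curve in the fiber, and conversely.

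One further check is needed. Global identifiability is only required to hold on $\Theta$, and the fiber could have other components far from $\theta^\star$. This is harmless, because only the component through $\theta^\star$ matters for tangent spaces. Finally, the union of the finitely many exceptional sets introduced above still has measure zero, which completes the argument.
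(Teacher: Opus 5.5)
Your proof is correct, and it follows a genuinely different route from the paper.

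\textbf{What both arguments establish.} Both show that generically $\operatorname{rank} DT_{\le r}(\theta^\star)=\dim W$.

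\textbf{The paper's route.} The paper argues algebraically. It takes the Zariski closure $Z$ of the image of $T_{\le k}$ and asserts $\dim Z=\dim W$ from injectivity on the quotient. It then invokes generic smoothness (Hartshorne, Cor.~III.10.7) to obtain a Zariski-open set on which $DT_{\le k}$ has rank $\dim Z$.

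\textbf{Your route.} You first note that the maximal-rank locus is the complement of the common zero set of the $\rho_{\max}$-minors. You apply the constant rank theorem there, and then use global identifiability to identify the local fiber with $[\theta^\star]$. The equality $T(\theta^\star)=\ker DT_{\le r}(\theta^\star)$ then falls out directly.

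\textbf{What your route buys.}
\begin{itemize}
\item The dimension count is an output rather than an input: you obtain $\rho_{\max}=m-\dim[\theta^\star]$ instead of assuming $\dim Z=\dim W$.
\item You avoid applying a theorem stated over algebraically closed fields to real points.
\item You make explicit the inclusion $T(\theta^\star)\subseteq\ker DT_{\le r}(\theta^\star)$. The paper's short proof leaves this implicit, yet it is needed to pass from ``rank $=\dim W$'' to injectivity on $W$.
\end{itemize}

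\textbf{What the paper's route buys.} The good set is visibly Zariski open, matching the genericity convention of Definition~\ref{def:generic_signal}. Your rank-deficient locus is also a proper algebraic set, so you reach the same conclusion up to the identifiability exceptional set, which both arguments must discard anyway.

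\textbf{On your ``main obstacle''.} Keep your fallback rather than your first suggestion. Nothing in the paper guarantees that the set where Assumption~\ref{ass:equiv_class_submanifold} fails has measure zero. The fallback sidesteps this: it shows that the local manifold structure of $[\theta^\star]$ at generic points follows from constant rank plus identifiability, so no such assumption is needed there.
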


In principle, the two orders need not coincide. As the following example shows, a model may require moments up to order $k$ to distinguish parameters globally, while a given identifiable direction may become visible to first order at a lower order. 
Hence, global moment identifiability alone does not determine the local low-SNR Fisher information.




\begin{example}[Local identifiability can precede global identifiability] Consider a one-dimensional parameter space $\Theta=\mathbb{R}$, with trivial quotient, and let $Z$ take three values such that
\begin{align}
    A(Z)=
    \begin{cases}
    1, & \text{with probability } \frac{1}{3},\\
    2, & \text{with probability } \frac{1}{3},\\
    -3 & \text{with probability }  \frac{1}{3}.
    \end{cases}
\end{align}
Then, a direct calculation shows that $\mathbb{E}[A(Z)] = 0$, so $T_1(\theta)=0$.
On the other hand, $T_2(\theta) = \frac{14}{3}\theta^2$, and $T_3(\theta) = -6\theta^3$.
Now, $T_2(\theta)$ does not globally identify $\theta$, since $T_2(\theta)=T_2(-\theta)$.
However, for  generic $\theta^\star\neq 0$,
\begin{align}
    DT_2(\theta^\star)[h]=\frac{28}{3}\theta^\star h,
\end{align}
so the second-order moment map is already locally injective there. Thus, $r_{\mathrm{loc}}(\theta^\star)=2$,  for generic  $\theta^\star\neq 0$, whereas $r_{\mathrm{glob}}=3$.

\end{example}

\subsection{Moment filtrations and informative layers}
\label{subsec:residual_subspaces_layers}

We now refine the local picture by organizing the identifiable normal space according to the lowest moment order at which a direction becomes visible. 
Recall that $W=T(\theta^\star)^{\perp}$ denotes the normal space at $\theta^\star$. For $h\in W$, recall that $r(h)$, as defined in~\eqref{eqn:def_r_h}, denotes the first order at which the differential moment map detects $h$, and $r_{\mathrm{loc}}(\theta^\star) $ is the local informative moment order~\eqref{eq:rloc_def}.

\begin{definition}[Moment filtration and informative layers]
\label{def:residual_subspaces_layers}
Define the moment-filtration subspaces by
\begin{align}
    \label{eq:Vk_def}
    V_1 &\triangleq W,
    \\
    V_k &\triangleq    \Big(\bigcap_{j<k}\ker\bigl(DT_j(\theta^\star)\bigr)\Big)\cap W \;=\; \{h\in W:\ r(h)\ge k\},
    \qquad k\ge 2.
\end{align}
Each $V_k$ is a linear subspace of $W$, since it is an intersection of linear subspaces.
The $k$-th informative layer is defined by
\begin{align}
    \label{eq:Uk_def}
    U_k \triangleq V_k\cap V_{k+1}^{\perp},
\end{align}
where orthogonality is taken with respect to the standard Euclidean inner product on $\mathbb{R}^m$. In particular, each $U_k$ is also a linear subspace, being the intersection of two linear subspaces.
\end{definition}

The family $\{V_k\}_{k\ge 1}$ forms a decreasing filtration
\begin{align}
    W = V_1 \supseteq V_2 \supseteq \cdots,
\end{align}
where $V_k$ consists of directions that are invisible to all differential moments of order strictly less than $k$. Thus, the quotient $V_k/V_{k+1}$ captures the genuinely new directions that become detectable at order $k$, and $U_k$ is a concrete orthogonal representative of this quotient layer.

By construction, $V_k = U_k \oplus V_{k+1}$, and hence every $h\in V_k$ admits a unique decomposition into a component in $U_k$ and a component in $V_{k+1}$.
In particular, when $r_{\mathrm{loc}} \triangleq  r_{\mathrm{loc}}(\theta^\star) < \infty$, the filtration terminates at $V_{r_{\mathrm{loc}}+1}=\{0\}$, and iterating the above decomposition yields
\begin{align}
    \label{eq:W_layer_decomposition}
    W=\bigoplus_{k=1}^{r_{\mathrm{loc}}} U_k.
\end{align}

This decomposition resolves the local geometry of the model according to informative moment order. Proposition~\ref{prop:fisher_layer_block_asymptotics} shows that the same layered structure appears directly in the scaling of the observed Fisher information. This correspondence between moment-layer geometry and Fisher-information scaling is the central mechanism behind the main efficiency result of the paper.

\begin{example}[Informative-layer decomposition in cyclic MRA on $\mathbb{Z}_3$]
We illustrate the informative-layer decomposition for the cyclic MRA model with $\mathbb{Z}_3$ acting on $\mathbb{R}^3$ by circular shifts. Since the orbit of a generic signal is finite, the equivalence class is discrete, and therefore the tangent space is trivial, and the normal space satisfies $W=T(\theta^\star)^\perp=\mathbb{R}^3$.
For generic signals in this model one has $r_{\mathrm{glob}}=3$~\cite{bendory2017bispectrum}. Up to order three, the cyclic-invariant signal moments can be represented, after removing redundant entries, by the map
\begin{align}
    T_{\leq 3}(x) = \Big(&x_1+x_2+x_3,\;
    x_1^2+x_2^2+x_3^2,\;
    x_1x_2+x_2x_3+x_3x_1, \notag\\
    &x_1^3+x_2^3+x_3^3,\;
    x_1^2x_2+x_2^2x_3+x_3^2x_1,\;
    x_1x_2^2+x_2x_3^2+x_3x_1^2
    \Big).
\label{eq:example_Z3_moments}
\end{align}
Its Jacobian is
\begin{align}
    D T_{\leq 3}(x)=
    \begin{bmatrix}
    1 & 1 & 1\\
    2x_1 & 2x_2 & 2x_3\\
    x_2+x_3 & x_1+x_3 & x_1+x_2\\
    3x_1^2 & 3x_2^2 & 3x_3^2\\
    2x_1x_2+x_3^2 & x_1^2+2x_2x_3 & x_2^2+2x_3x_1\\
    x_2^2+2x_1x_3 & 2x_1x_2+x_3^2 & x_1^2+2x_2x_3
    \end{bmatrix}.   \label{eq:example_Z3_jacobian}
\end{align}
We evaluate this Jacobian at $\theta^\star=(1,2,-4)$, which gives
\begin{align}
    D T_{\leq 3}(\theta^\star)=
    \begin{pmatrix}
    1 & 1 & 1\\
    2 & 4 & -8\\
    -2 & -3 & 3\\
    3 & 12 & 48\\
    20 & -15 & -4\\
    -4 & 20 & -15
    \end{pmatrix}.    \label{eq:example_Z3_jacobian_theta}
\end{align}
We now compute the moment filtration. By definition, $V_1=W=\mathbb{R}^3$. Next,
\begin{align}
    V_2 = \ker DT_1(\theta^\star) = \ker
    \begin{pmatrix}
    1 & 1 & 1
    \end{pmatrix}
    =
    \langle (1,1,1)\rangle^\perp.
    \label{eq:example_Z3_V2}
\end{align}
Similarly,
\begin{align}
    V_3 &=  \ker D(T_{\leq 2})(\theta^\star) \notag\\
    &=
    \ker
    \begin{pmatrix}
    1 & 1 & 1\\
    2 & 4 & -8\\
    -2 & -3 & 3
    \end{pmatrix}
    =
    \operatorname{span}\{(-6,5,1)\}.
    \label{eq:example_Z3_V3}
\end{align}
Finally, $V_4 = \ker D(T_{\leq 3})(\theta^\star) = \{0\}$, since the full stacked Jacobian in \eqref{eq:example_Z3_jacobian_theta} has rank $3$. Therefore, the informative layers are
\begin{align}
    U_1 &= V_1\cap V_2^\perp = \operatorname{span}\{(1,1,1)\},
    \label{eq:example_Z3_U1}\\
    U_2 &= V_2\cap V_3^\perp = \operatorname{span}\{(4,7,-11)\},
    \label{eq:example_Z3_U2}\\
    U_3 &= V_3\cap V_4^\perp = V_3 = \operatorname{span}\{(-6,5,1)\}.
\label{eq:example_Z3_U3}
\end{align}
Hence
\begin{align}
    W = U_1\oplus U_2\oplus U_3, \qquad \dim(U_1)=\dim(U_2)=\dim(U_3)=1. \label{eq:example_Z3_decomposition}
\end{align}
The three layers correspond to directions first detected by the first-, second-, and third-order invariant moments, respectively. In cyclic MRA, these are precisely the mean, power-spectrum, and bispectrum levels: $U_1$ is visible from the mean, $U_2$ from the power spectrum after removing the mean direction, and $U_3$ only from bispectrum information~\cite{balanov2026expectation}.
\end{example}

\subsection{Spectral decomposition of the Fisher information in low SNR}
\label{subsec:main-results-fisher-layers}

We now connect the moment-layer decomposition of the normal space $W$ to the local likelihood geometry. After decomposing $W=\bigoplus_{k=1}^{r_{\mathrm{loc}}}U_k$ into informative layers, we show that the restricted observed Fisher information is asymptotically block diagonal in low SNR, with the $k$-th diagonal block first appearing at order $\mathrm{SNR}^k$. Its leading term is governed by the first nonvanishing differential signal moment on $U_k$. Thus, the moment order that detects a local direction also determines the SNR scale of its Fisher information. The proof is provided in Appendix~\ref{app:proof_fisher_layer_block_asymptotics}.

\begin{proposition}[Layerwise block asymptotics of the Fisher information]
\label{prop:fisher_layer_block_asymptotics}
For every $1\leq k,\ell\leq r_{\mathrm{loc}}$, every $h\in U_k$, and every $g \in U_\ell$,
\begin{align}
    \label{eq:block_fisher_layered}
    \big\langle h,\mathcal{I}_{\mathrm{obs}}^{(W)}(\theta^\star;\beta)g\big\rangle =
    \begin{cases}
        \displaystyle
        \frac{\mathrm{SNR}^{k}}{k!}\,\big\langle DT_k(\theta^\star)[h],DT_k(\theta^\star)[g]\big\rangle_F + O\bigl(\mathrm{SNR}^{k+1}\bigr),
        & k=\ell,\\[2ex]       O\bigl(\mathrm{SNR}^{(k+\ell+1)/2}\bigr),
        & |k-\ell|=1,\\[1.25ex]  O\bigl(\mathrm{SNR}^{(k+\ell+2)/2}\bigr),
        & |k-\ell|\geq 2.
    \end{cases}
\end{align}
as $\mathrm{SNR}\to 0$.
\end{proposition}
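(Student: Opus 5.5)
The plan is to expand the Fisher information exactly in a Gaussian–Hermite basis, where the informative layers separate by orthogonality. Write $s\triangleq\mathrm{SNR}=\beta^2/\sigma^2$ and $x\triangleq y/\sigma$, and let $\phi$ denote the $\mathcal{N}(0,I_d)$ density.

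\textbf{Step 1: Hermite expansion of the density.} For fixed $z$, the Gaussian generating function gives
\begin{align}
\frac{\varphi_\sigma(y-\beta A(z)\theta)}{\varphi_\sigma(y)}=\exp\!\Big(\langle v,x\rangle-\tfrac12\|v\|^2\Big)=\sum_{k\ge0}\frac{1}{k!}\big\langle v^{\otimes k},\mathrm{He}_k(x)\big\rangle ,
\end{align}
with $v=\sqrt{s}\,A(z)\theta$ and $\mathrm{He}_k$ the tensor Hermite polynomials. Averaging over $Z\sim\mu$ yields
\begin{align}
p_{\theta,\beta}(y)=\sigma^{-d}\phi(x)\bigl(1+\Delta_\theta(x)\bigr),\qquad \Delta_\theta(x)=\sum_{k\ge1}\frac{s^{k/2}}{k!}\big\langle T_k(\theta),\mathrm{He}_k(x)\big\rangle .
\end{align}
Differentiating in the direction $h$ at $\theta^\star$ (justified by $\|A(z)\|_{\mathrm{op}}\le a_{\max}$ and dominated convergence) gives
\begin{align}
a_h(x)\triangleq\partial_h\Delta_{\theta^\star}(x)=\sum_{j\ge1}\frac{s^{j/2}}{j!}\big\langle DT_j(\theta^\star)[h],\mathrm{He}_j(x)\big\rangle .
\end{align}
If $h\in U_k$, all terms with $j<k$ vanish, so $a_h$ starts at order $s^{k/2}$. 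The Fisher bilinear form is then
\begin{align}
\langle h,\mathcal{I}_{\mathrm{obs}}g\rangle=\mathbb{E}_{X\sim\phi}\!\Big[\frac{a_h(X)a_g(X)}{1+\Delta_{\theta^\star}(X)}\Big].
\end{align}

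\textbf{Step 2: leading term by orthogonality.} Expand $(1+\Delta)^{-1}=\sum_{q=0}^{Q}(-\Delta)^q+\mathrm{Rem}_Q$. For the $q=0$ term, use $\mathbb{E}_\phi\langle S,\mathrm{He}_j\rangle\langle S',\mathrm{He}_{j'}\rangle=\delta_{jj'}\,j!\,\langle S,S'\rangle_F$ for symmetric tensors, which the moment derivatives are. This gives
\begin{align}
\mathbb{E}_\phi[a_ha_g]=\sum_{j\ge\max(k,\ell)}\frac{s^{j}}{j!}\big\langle DT_j(\theta^\star)[h],DT_j(\theta^\star)[g]\big\rangle_F .
\end{align}
\begin{itemize}
\item For $k=\ell$, this is exactly the claimed leading term plus $O(s^{k+1})$.
\item For $k\ne\ell$, it is $O(s^{\max(k,\ell)})$. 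Since $\max(k,\ell)=(k+\ell+1)/2$ when $|k-\ell|=1$ and $\max(k,\ell)\ge(k+\ell+2)/2$ when $|k-\ell|\ge2$, the stated off-diagonal rates follow.
\end{itemize}

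\textbf{Step 3: the correction terms.} For $q\ge1$, the term $\mathbb{E}_\phi[a_ha_g\Delta^q]$ is a sum of contributions
\begin{align}
s^{(j+j'+i_1+\dots+i_q)/2}\;\mathbb{E}_\phi\Big[\langle\cdot,\mathrm{He}_j\rangle\langle\cdot,\mathrm{He}_{j'}\rangle\prod_{r}\langle\cdot,\mathrm{He}_{i_r}\rangle\Big],
\end{align}
with $j\ge k$, $j'\ge\ell$ and $i_r\ge1$. The key observation is parity. A product of Hermite polynomials of total degree $N$ is a polynomial with the parity of $N$, so its Gaussian expectation vanishes unless $N$ is even. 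Hence every surviving exponent of $s$ is an integer at least $\lceil (k+\ell+q)/2\rceil$.
\begin{itemize}
\item For $k=\ell$ and $q\ge1$, this gives $O(s^{k+1})$, so the spurious $s^{k+1/2}$ term is killed.
\item For $|k-\ell|=1$, the bound is $\lceil(k+\ell+1)/2\rceil=(k+\ell+1)/2$.
\item For $|k-\ell|\ge2$ the $q=1$ bound is $\lceil(k+\ell+1)/2\rceil$. This matches $(k+\ell+2)/2$ when $k+\ell$ is even. When $k+\ell$ is odd (so $|k-\ell|\ge3$), $(k+\ell+2)/2$ is not an integer and the $q=1$ bound $(k+\ell+1)/2$ falls short by a half power. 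Here I would additionally invoke the product formula for Hermite polynomials, which forces the degrees $j,j',i$ to satisfy the triangle inequality $j\le j'+i$ for the expectation to be nonzero, and try to show this pushes the exponent up to $(k+\ell+3)/2$; this I have not checked.
\end{itemize}
Choosing $Q$ large, all finitely many relevant coefficient tensors are bounded via $a_{\max}$ and compactness of $\Theta$.

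\textbf{Main obstacle: the remainder.} The hard step is controlling $\mathbb{E}_\phi[a_ha_g\,\mathrm{Rem}_Q]$ rigorously, since $1+\Delta$ is not uniformly close to $1$ for large $\|x\|$. I would write $\mathrm{Rem}_Q=(-\Delta)^{Q+1}/(1+\Delta)$ and bound $1/(1+\Delta)=\phi/\tilde p$ from below via Jensen:
\begin{align}
1+\Delta(x)=\mathbb{E}_Z\exp\!\big(\langle v_Z,x\rangle-\tfrac12\|v_Z\|^2\big)\ge\exp\!\big(-\sqrt{s}\,a_{\max}\|\theta^\star\|\,\|x\|-\tfrac{s}{2}a_{\max}^2\|\theta^\star\|^2\big).
\end{align}
Together with $|\Delta|,|a_h|\lesssim\sqrt{s}\,(1+\|x\|)^{C}e^{\sqrt{s}c\|x\|}$, Cauchy–Schwarz against Gaussian moments gives $\mathbb{E}_\phi|a_ha_g\mathrm{Rem}_Q|=O(s^{(k+\ell+Q+1)/2})$ uniformly for $s\le s_0$. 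This is negligible once $Q\ge 3$. Compressing the resulting matrix to $W$ then gives the statement.
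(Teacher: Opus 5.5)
Your route is the paper's route with different bookkeeping. The paper writes the score as $S_h=t^kA_h+t^{k+1}B_h+O(t^{k+2})$ with $t=\sqrt{\mathrm{SNR}}$, multiplies $S_hS_g$ by $R=1+\Delta$, and integrates against $\phi$. That is exactly your $q=0$ and $q=1$ terms of $\mathbb{E}_\phi[a_ha_g/(1+\Delta)]$, and the paper then concludes, as you do, by chaos orthogonality and parity.

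The step you left unchecked ($|k-\ell|\ge 3$ with $k+\ell$ odd) does close, by exactly the product-formula argument you name; this is the paper's Step~5.
\begin{itemize}
\item Consider the total coefficient of $s^{(k+\ell+1)/2}$. The $q=0$ term contributes nothing there, since it is $O(s^{\max(k,\ell)})$ and $\max(k,\ell)\ge (k+\ell+3)/2$ when $|k-\ell|\ge 3$.
\item In the $q=1$ sum, the only term with that exponent is $(j,j',i_1)=(k,\ell,1)$. Multiplying an element of $\mathcal{H}_k$ by $\langle T_1(\theta^\star),x\rangle$ lands in $\mathcal{H}_{k+1}\oplus\mathcal{H}_{k-1}$. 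So its pairing with $\langle DT_\ell(\theta^\star)[g],\mathrm{He}_\ell\rangle$ vanishes whenever $|k-\ell|\ge 2$.
\item Every other term, including all $q\ge 2$ terms, has total Hermite degree at least $k+\ell+2$. By your parity observation, that degree must then be at least $k+\ell+3$.
\end{itemize}
So in this case you actually get $O(\mathrm{SNR}^{(k+\ell+3)/2})$, better than claimed.

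On the remainder, there are two small fixes.
\begin{itemize}
\item With the pointwise bound $|a_h|\lesssim\sqrt{s}\,(1+\|x\|)^Ce^{c\sqrt{s}\|x\|}$ as you stated it, Cauchy--Schwarz only gives $O(s^{(Q+3)/2})$, not $O(s^{(k+\ell+Q+1)/2})$. Either take $Q\ge k+\ell$, which is harmless since $k,\ell\le r_{\mathrm{loc}}$. Or use the sharper $|a_h|\lesssim s^{k/2}(1+\|x\|)^Ce^{c\sqrt{s}\|x\|}$, which follows from Taylor's theorem in $t$ with integral remainder because the first $k-1$ coefficients vanish on $U_k$.
\item Truncate the series for $a_h,a_g,\Delta$ at finite order with $L^p(\phi)$ remainders, as in Lemma~\ref{lem:uniform_Hermite_likelihood_ratio}, before multiplying them, rather than manipulating the infinite series termwise.
\end{itemize}

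Your Jensen lower bound on $1+\Delta$ is worth keeping. The paper's expansion of $R^{-1}$ in Lemma~\ref{lem:lr_inverse_expansion} uses the Taylor expansion of $(1+u)^{-1}$ and $L^2(p_{\theta,\beta})$ remainder bounds. Both implicitly require control of $1/R$ for large $\|x\|$, which your bound supplies and the paper leaves unstated.
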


The block expansion also determines the low-SNR scaling in any fixed identifiable direction. Indeed, for $h\in W$, the first nonzero contribution to the quadratic form of the observed Fisher information appears at the first moment order at which the differential moment map detects $h$, namely, at order $r(h)$~\eqref{eqn:def_r_h}. The next corollary, which is proved in Appendix~\ref{app:proof_dir_exponents_Iobs}, states this observation and records the corresponding consequence for the smallest eigenvalue of the restricted Fisher information.

\begin{corollary}
[Directional scaling and smallest-eigenvalue of the observed Fisher information]
\label{cor:dir_exponents_Iobs}
Let $r(h)$ be defined as in~\eqref{eqn:def_r_h}. Then, for every $h\in W\setminus\{0\}$ with $r(h)<\infty$,
\begin{align}
    \label{eq:lmin_Iobs_scale_main}
    h^\top \mathcal{I}_{\mathrm{obs}}^{(W)}(\theta^\star;\beta)\,h = \frac{\|DT_{r(h)}(\theta^\star)[h]\|_F^2}{r(h)!}\, \mathrm{SNR}^{\,r(h)} + O\bigl(\mathrm{SNR}^{\,r(h)+1}\bigr),
\end{align}
as $\mathrm{SNR}\to 0$.
Consequently, if $r_{\mathrm{loc}}(\theta^\star)<\infty$, then
\begin{align}
    \lambda_{\min}\!\Bigl( \mathcal{I}_{\mathrm{obs}}^{(W)}(\theta^\star;\beta) \Bigr) = \Theta\bigl(\mathrm{SNR}^{\,r_{\mathrm{loc}}(\theta^\star)}\bigr). \label{eq:lmin_Iobs_scaling_rloc}
\end{align}
\end{corollary}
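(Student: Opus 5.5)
The plan is to reduce both claims to Proposition~\ref{prop:fisher_layer_block_asymptotics}. For the directional statement \eqref{eq:lmin_Iobs_scale_main}, fix $h\in W\setminus\{0\}$ with $r\triangleq r(h)<\infty$. By definition $h\in V_r$ and $h\notin V_{r+1}$. Decompose $h$ along the orthogonal splitting $V_r=U_r\oplus V_{r+1}$, and iterate through the layers, to get $h=\sum_{k=r}^{r_{\mathrm{loc}}}h_k$ with $h_k\in U_k$. (If $r_{\mathrm{loc}}=\infty$, first restrict to a finite truncation; I would handle this by noting that only finitely many layers are nonzero in the finite-dimensional $W$, so the filtration stabilizes. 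Directions in the stable limit have $r=\infty$ and are excluded, and they are orthogonal to the rest.)

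Since $DT_r(\theta^\star)$ vanishes on $V_{r+1}$, we have $DT_r(\theta^\star)[h]=DT_r(\theta^\star)[h_r]$, and therefore $h_r\neq 0$. Expanding bilinearly,
\begin{align}
    h^\top \mathcal{I}_{\mathrm{obs}}^{(W)} h=\sum_{k,\ell\ge r}\big\langle h_k,\mathcal{I}_{\mathrm{obs}}^{(W)}h_\ell\big\rangle .
\end{align}
By Proposition~\ref{prop:fisher_layer_block_asymptotics}, the $(r,r)$ term equals $\mathrm{SNR}^r\|DT_r(\theta^\star)[h_r]\|_F^2/r!+O(\mathrm{SNR}^{r+1})$. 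The diagonal terms with $k>r$ are $O(\mathrm{SNR}^{r+1})$. The off-diagonal terms are $O(\mathrm{SNR}^{(k+\ell+1)/2})$ with $k+\ell\ge 2r+1$, which is $O(\mathrm{SNR}^{r+1})$. This yields \eqref{eq:lmin_Iobs_scale_main}.

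The obstacle here is small: the expansion holds for each fixed $h$, but the remainder constants depend on $h$. This causes no trouble for the directional claim. For the eigenvalue claim I need uniformity, which I will get by working with the block matrix rather than with individual directions.

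For the eigenvalue statement, the upper bound is immediate. Take a unit $h\in U_{r_{\mathrm{loc}}}$, which is nonzero because $V_{r_{\mathrm{loc}}}\neq\{0\}$ by the definition of the maximum. The directional formula then gives $\lambda_{\min}\le h^\top\mathcal{I}_{\mathrm{obs}}^{(W)}h=O(\mathrm{SNR}^{r_{\mathrm{loc}}})$.

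For the lower bound, use the rescaling $D_\beta=\bigoplus_k \mathrm{SNR}^{-k/2}\,\mathrm{id}_{U_k}$. Fix orthonormal bases of each $U_k$, and apply Proposition~\ref{prop:fisher_layer_block_asymptotics} to basis pairs. There are finitely many such pairs, so the constants are uniform. This gives
\begin{align}
    D_\beta\,\mathcal{I}_{\mathrm{obs}}^{(W)}D_\beta=\mathcal{G}+O(\mathrm{SNR}^{1/2}),
\end{align}
where $\mathcal{G}$ is block diagonal with blocks $\mathcal{G}_k=\frac{1}{k!}DT_k(\theta^\star)|_{U_k}^{*}DT_k(\theta^\star)|_{U_k}$. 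Indeed, the diagonal blocks carry errors $O(\mathrm{SNR})$, and the off-diagonal blocks carry errors $O(\mathrm{SNR}^{1/2})$ or smaller.

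Each $\mathcal{G}_k$ is positive definite. If $DT_k(\theta^\star)[u]=0$ for some $u\in U_k$, then $u\in V_{k+1}\cap V_{k+1}^\perp=\{0\}$. Hence, for small SNR, $\lambda_{\min}(D_\beta\mathcal{I}D_\beta)\ge c>0$. For unit $h\in W$ we then get
\begin{align}
    h^\top\mathcal{I}h\ge c\|D_\beta^{-1}h\|^2\ge c\,\mathrm{SNR}^{r_{\mathrm{loc}}},
\end{align}
because $\|D_\beta^{-1}h\|^2=\sum_k\mathrm{SNR}^k\|h_k\|^2\ge\mathrm{SNR}^{r_{\mathrm{loc}}}$ when $\mathrm{SNR}\le1$. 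This gives $\lambda_{\min}=\Theta(\mathrm{SNR}^{r_{\mathrm{loc}}})$.
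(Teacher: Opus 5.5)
Your proposal is correct and follows the paper's own proof. The paper likewise decomposes $h$ along the layers $U_k$ and reads the directional formula off the $(r,r)$ block of Proposition~\ref{prop:fisher_layer_block_asymptotics}. It gets the upper bound from a unit vector in $U_{r_{\mathrm{loc}}}$ and the lower bound $h^\top\mathcal{I}_{\mathrm{obs}}^{(W)}h\ge c\sum_k \mathrm{SNR}^{k}\|h_k\|^2$ from injectivity of $DT_k(\theta^\star)$ on $U_k$. Your rescaling $D_\beta$ with basis-wise constants is a cleaner, explicitly uniform version of the paper's ``absorbed into the leading diagonal terms'' step. In your $r_{\mathrm{loc}}=\infty$ aside, note that the $V_\infty$ component of $h$ is harmless because its score vanishes identically, not merely because it is Euclidean-orthogonal to the layers.
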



\section{The statistical efficiency of the generalized method of moments}
\label{sec:GMoM_efficiency}
In this section, we establish the main result of the paper: in the low-SNR regime, the observed Fisher information and the GMoM information matrix agree to leading order. 

\subsection{Empirical moment features}
\label{subsec:empirical_features_main}

To construct the GMoM estimator, we use a finite collection of low-order empirical moment features derived from the observations.

\begin{definition}[Empirical raw moments]
\label{def:empirical_raw_moments}
Let $\mathcal{Y} = \{y_i\}_{i=1}^n\subset\mathbb{R}^d$ be observations drawn according to Model~\ref{model:meanscaling}. For each $j\ge 1$, define the empirical raw moment tensor
\begin{align}
    \widehat{R}_{j,n}(\mathcal{Y}) \triangleq \frac{1}{n}\sum_{i=1}^n y_i^{\otimes j} \in (\mathbb{R}^d)^{\otimes j},
\end{align}
and its population counterpart
\begin{align}
    R_j(\theta^\star;\beta) \triangleq \mathbb{E}_{Y\sim p_{\theta^\star,\beta}}\!\left[Y^{\otimes j}\right] \in (\mathbb{R}^d)^{\otimes j},
\end{align}
where $Y$ denotes an observation from Model~\ref{model:meanscaling}.
\end{definition}

\paragraph{From raw moments to signal moments.}
Raw empirical moments contain both the latent signal and additive Gaussian noise. Since the noise law is known, the Gaussian terms can be removed explicitly. We therefore construct, for each order $j$, a noise-debiased empirical moment $\widehat{T}_{j,n}(\mathcal{Y})$ whose population target is the signal moment $T_j(\theta)=\mathbb{E}_{Z \sim \mu}[(A(Z)\theta)^{\otimes j}]$, namely $\mathbb{E}[\widehat{T}_{j,n}(\mathcal{Y})]=T_j(\theta)$.

Formally, fix a moment cutoff $L\triangleq L_{\mathrm{mom}}\geq 1$. For $j=1,\ldots,L$, $\widehat{T}_{j,n}(\mathcal{Y})$ is obtained by applying the order-$j$ noise-debiasing rule to each observation and averaging. A convenient way to implement this debiasing is through multivariate Hermite tensors, which form the natural Gaussian-orthogonal feature system for the low-SNR expansion~\cite{geng2020wiener}. In low orders, the resulting estimators reduce to
\begin{align}
    \widehat{T}_{1,n}(\mathcal{Y})
    &= \beta^{-1}\widehat{R}_{1,n}(\mathcal{Y}),\\\widehat{T}_{2,n}(\mathcal{Y})
    &= \beta^{-2}\bigl(\widehat{R}_{2,n}(\mathcal{Y})-\sigma^2 I_d\bigr),
\end{align}
with analogous Hermite-based debiasing formulas at higher orders. The detailed construction and the relevant Hermite identities are deferred to Appendix~\ref{subsec:empirical_moments}.

We now collect these debiased moments into a single feature vector. Let $\psi(y)$ denote the the stacked noise-debiased moment feature vector of a single observation $y$ up to order $L$, after a low-SNR normalization (see~\eqref{def:stacked_Hermite_feature_map} for a formal definition). With this notation, the empirical feature vector is
\begin{align}
    M_n(\mathcal{Y}) \triangleq \frac{1}{n}\sum_{i=1}^n \psi(y_i) =
    \begin{bmatrix}
        (\beta/\sigma)\,\widehat{T}_{1,n}(\mathcal{Y})\\ (\beta/\sigma)^2\,\widehat{T}_{2,n}(\mathcal{Y})\\
        \vdots\\    (\beta/\sigma)^L\,\widehat{T}_{L,n}(\mathcal{Y})
    \end{bmatrix},  \label{eq:def_Mn_maintext}
\end{align}
and its population counterpart is
\begin{align}
    M(\theta;\beta) \triangleq \mathbb{E}_{Y\sim p_{\theta,\beta}}[\psi(Y)] =
    \begin{bmatrix}
        (\beta/\sigma)\,T_1(\theta)\\    (\beta/\sigma)^2\,T_2(\theta)\\
        \vdots\\        (\beta/\sigma)^L\,T_L(\theta)
    \end{bmatrix}.    \label{eq:def_Mtheta_maintext}
\end{align}

\paragraph{The covariance of the moment features.}
The covariance of the moment features $\psi(y)$~\eqref{eq:def_Mn_maintext} plays a central role in the asymptotic theory, since the optimal weighting matrix for GMoM is the inverse feature covariance. We define
\begin{align}
    \Sigma_L(\theta^\star;\beta)
    &\triangleq \mathrm{Cov}_{Y \sim p_{\theta^\star,\beta}}\!\bigl(\psi(Y)\bigr)
    \\ 
    &= \mathbb{E}_{Y\sim p_{\theta^\star,\beta}}\!\Big[ \bigl(\psi(Y)-M(\theta^\star;\beta)\bigr) \bigl(\psi(Y)-M(\theta^\star;\beta)\bigr)^\top \Big].    \label{eqn:def_Sigma_L}
\end{align}
This covariance is also the asymptotic covariance of the empirical feature vector, as recorded in the following proposition.

\begin{proposition}[Asymptotic normality of the empirical feature vector]
\label{prop:empirical_feature_clt_main}
Fix $\beta>0$ and $L\ge 1$. Under Assumption~\ref{ass:integrability_max},
\begin{align}
    \sqrt{n}\,\bigl(M_n(\mathcal{Y})-M(\theta^\star;\beta)\bigr) \xrightarrow[]{\mathcal{D}} \mathcal{N}\bigl(0,\Sigma_L(\theta^\star;\beta)\bigr).
\end{align}
\end{proposition}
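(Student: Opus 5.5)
The statement is a direct application of the multivariate Lindeberg--L\'evy central limit theorem. The plan is to write $M_n(\mathcal{Y})$ as an average of i.i.d.\ random vectors with mean $M(\theta^\star;\beta)$ and finite covariance $\Sigma_L(\theta^\star;\beta)$. Since $\beta>0$ and $L$ are fixed, no low-SNR uniformity is needed; only finiteness of second moments of the feature map must be checked.

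\textbf{Step 1 (i.i.d.\ structure and mean).} By \eqref{eq:def_Mn_maintext}, $M_n(\mathcal{Y})=\frac1n\sum_{i=1}^n\psi(y_i)$. The $y_i$ are i.i.d.\ with law $p_{\theta^\star,\beta}$, so the $\psi(y_i)$ are i.i.d.\ random vectors in a fixed finite-dimensional space $\bigoplus_{j=1}^L(\mathbb{R}^d)^{\otimes j}$. By construction of the noise-debiased (Hermite) features, $\mathbb{E}[\psi(Y)]=M(\theta^\star;\beta)$, as recorded in \eqref{eq:def_Mtheta_maintext}. This follows from the Hermite identity $\mathbb{E}_\xi[H_j(\beta a/\sigma+\xi)]=(\beta a/\sigma)^{\otimes j}$, which I take from the appendix construction, after conditioning on $Z$ and averaging over $\mu$.

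\textbf{Step 2 (finite second moments).} Each coordinate of $\psi(y)$ is a fixed polynomial in $y$ of degree at most $L$, with coefficients depending only on $\beta,\sigma,d,L$. It therefore suffices to show $\mathbb{E}\|Y\|^{2L}<\infty$. Write $Y=\beta A(Z)\theta^\star+\sigma\xi$. By Assumption~\ref{ass:integrability_max}, $\|A(Z)\theta^\star\|\le a_{\max}\|\theta^\star\|$ almost surely, and $\Theta$ is compact. Hence
\[
\|Y\|^{2L}\le 2^{2L-1}\bigl(\beta^{2L}a_{\max}^{2L}\|\theta^\star\|^{2L}+\sigma^{2L}\|\xi\|^{2L}\bigr),
\]
and the right-hand side is integrable because Gaussian vectors have moments of all orders. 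Consequently $\mathbb{E}\|\psi(Y)\|^2<\infty$, and $\Sigma_L(\theta^\star;\beta)$ in \eqref{eqn:def_Sigma_L} is a well-defined finite positive semidefinite matrix.

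\textbf{Step 3 (CLT).} Apply the multivariate CLT to the i.i.d.\ centered vectors $\psi(y_i)-M(\theta^\star;\beta)$, for instance via the Cram\'er--Wold device together with the scalar Lindeberg--L\'evy theorem. This gives $\sqrt n(M_n-M)\xrightarrow{\mathcal D}\mathcal N(0,\Sigma_L)$. The limit may be degenerate, since $\Sigma_L$ is possibly singular because of tensor symmetries. Degeneracy is allowed: for directions $v$ with $v^\top\Sigma_Lv=0$, the projection $v^\top\psi(Y)$ is almost surely constant, and Cram\'er--Wold still applies.

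The main obstacle is not probabilistic. It is confirming Step 1's unbiasedness, which depends on the exact Hermite normalization in \eqref{def:stacked_Hermite_feature_map}. I would verify it by conditioning on $Z$ and using the generating-function identity for multivariate Hermite tensors. Everything else is routine.
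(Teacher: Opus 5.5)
Your proposal is correct and follows the paper's route: you write $M_n$ as an i.i.d.\ average of the $\psi(y_i)$, get finite second moments from the bounded action and the Gaussian noise, and apply the multivariate CLT. You simply spell out what the paper compresses into one line, namely the unbiasedness via the Hermite generating function, the explicit $2L$-th moment bound, and the harmless possibility of a singular $\Sigma_L$.
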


\begin{proof}
By \eqref{eq:def_Mn_maintext}, the vector $M_n(\mathcal{Y})$ is the empirical mean of the i.i.d.\ random vectors $\psi(y_i)$. Under Assumption~\ref{ass:integrability_max}, all moments of $Y$ are finite, and therefore $\psi(Y)$ has finite second moment. The conclusion follows from the multivariate central limit theorem.

\end{proof}

\subsection{The GMoM estimator and its local information}

We next recall the standard GMoM formulation, since it explains both the choice of weighting matrix and the form of the local information matrix used below.
Let $g(Y,\theta)$ be a vector of moment conditions satisfying $\mathbb{E}_{Y\sim p_{\theta^\star , \beta}}[g(Y,\theta^\star)]=0$. Given i.i.d. observations $\mathcal{Y}=\{y_i\}_{i=1}^n$, the GMoM estimator minimizes the weighted quadratic criterion
\begin{align}
    \widehat\theta \in \argmin_{\theta\in\Theta} \left( \frac{1}{n}\sum_{i=1}^n g(y_i,\theta) \right)^\top \Omega_n \left(  \frac{1}{n}\sum_{i=1}^n g(y_i,\theta) \right), \label{eq:standard_GMoM_estimator}
\end{align}
where $\Omega_n$ is a positive semidefinite weighting matrix.

Under regularity and identification conditions, the estimator is consistent and asymptotically normal. We write $G=\mathbb{E}[\nabla_\theta g(Y,\theta^\star)]$ and $S=\mathrm{Cov}(g(Y,\theta^\star))$. Then, the optimal weighting within the fixed class of moment conditions is $\Omega_n\xrightarrow{\mathbb{P}}S^{-1}$, and with this choice, the limiting asymptotic covariance is (see, for example,~\cite{hansen1982large,newey1994large})
\begin{align}
    \Sigma_{\mathrm{GMoM}} \triangleq \bigl(G^\top S^{-1}G\bigr)^{-1}.\label{eqn:limiting_covariance}
\end{align}
Thus, $G^\top S^{-1}G$ is the local information matrix associated with the selected moment conditions.

We now specialize this construction to the moment features used in this paper. Fix a moment cutoff $L$, and let $M_n(\mathcal{Y})$ and $M(\theta;\beta)$ be the empirical and population moment vectors defined in \eqref{eq:def_Mn_maintext}--\eqref{eq:def_Mtheta_maintext}. With $g(Y,\theta)=\psi(Y)-M(\theta;\beta)$, we have $\mathbb{E}_{Y \sim p_{\theta^\star, \beta}}[g(Y,\theta^\star)]=0$ and
\begin{align}
    \frac{1}{n}\sum_{i=1}^n g(y_i,\theta) = M_n(\mathcal{Y})-M(\theta;\beta).
\end{align}
Accordingly, the GMoM criterion in our setting is
\begin{align}
    Q_n(\theta;\mathcal{Y}) \triangleq \bigl(M_n(\mathcal{Y})-M(\theta;\beta)\bigr)^\top \Omega_n \bigl(M_n(\mathcal{Y})-M(\theta;\beta)\bigr),    \label{eq:def_GMoM_criterion_main}
\end{align}
and the corresponding estimator is
\begin{align}
    \widehat{\theta}_{\mathrm{GMoM}}(\mathcal{Y}) \in \argmin_{\theta\in\Theta} Q_n(\theta;\mathcal{Y}). \label{eq:def_GMoM_estimator_main}
\end{align}
Since the model is generally identifiable only up to observational equivalence, $\widehat{\theta}_{\mathrm{GMoM}}$ is understood as an estimator of the class $[\theta^\star]$.

For the selected features, the covariance of the moment condition at the truth is 
\begin{align}
    \operatorname{Cov}_{Y\sim p_{\theta^\star,\beta}}(g(Y,\theta^\star)) =\operatorname{Cov}_{Y\sim p_{\theta^\star,\beta}}(\psi(Y)) =\Sigma_L(\theta^\star;\beta)   
\end{align}
where $\Sigma_L(\theta^\star;\beta)$ is defined in~\eqref{eqn:def_Sigma_L}. Hence, the optimal GMoM weighting in our setting is $\Omega_n\xrightarrow{\mathbb{P}}\Sigma_L(\theta^\star;\beta)^{-1}$.

It remains to identify the corresponding local information matrix. Since local estimation is carried out on the normal space $W$ to the equivalence class at $\theta^\star$, we define the restricted derivative of the moment map by
\begin{align}
    D_WM(\theta^\star;\beta) \triangleq DM(\theta^\star;\beta)\big|_W . \label{eqn:def_Jacobian_moments}
\end{align}
The derivative of $g(Y,\theta)=\psi(Y)-M(\theta;\beta)$ at $\theta^\star$, restricted to $W$, is therefore $-D_WM(\theta^\star;\beta)$. Applying the standard GMoM covariance formula~\eqref{eqn:limiting_covariance} with $S=\Sigma_L(\theta^\star;\beta)$ and $G = -D_WM(\theta^\star;\beta)$ gives the asymptotic covariance
\begin{align}
    \label{eqn:asymptotic_covariance_GMoM} \Sigma_{\mathrm{GMoM}}(\theta^\star;\beta) \triangleq \left(    D_WM(\theta^\star;\beta)^\top   \Sigma_L(\theta^\star;\beta)^{-1}    D_WM(\theta^\star;\beta) \right)^{-1}.
\end{align}
This motivates the definition of the restricted GMoM information matrix,
\begin{align}
    \mathcal{I}_{\mathrm{GMoM}}^{(W)}(\theta^\star;\beta) &\triangleq \Sigma_{\mathrm{GMoM}}^{-1} (\theta^\star;\beta)
    \notag \\ & = D_WM(\theta^\star;\beta)^\top \Sigma_L(\theta^\star;\beta)^{-1} D_WM(\theta^\star;\beta). \label{eq:def_GMoM_information}
\end{align}
Thus, $\mathcal{I}_{\mathrm{GMoM}}^{(W)}$ is the GMoM analogue of the restricted observed Fisher information. It is the local quadratic information obtained by linearizing the selected moment map on $W$ and weighting moment-space fluctuations by the inverse covariance of the selected features.

\begin{remark}[Data-driven estimation of the optimal GMoM weighting]
\label{rem:data_driven_optimal_GMoM_weighting}
A useful feature of the optimal GMoM weighting is that $\Sigma_L(\theta^\star;\beta)$ can be estimated directly from the same observations used to form $M_n(\mathcal{Y})$. Although it is a population quantity and depends on the law $Y\sim p_{\theta^\star,\beta}$, it is simply the covariance of the observed feature vector $\psi(Y)$. Hence, its feasible estimate does not require prior knowledge of $\theta^\star$, nor does it require evaluating a model-implied covariance at candidate parameter values during optimization. This contrasts with GMoM settings in which the optimal weighting matrix requires preliminary estimation or iterative re-estimation~\cite{newey1994large}.
\end{remark}

\subsection{Consistency and asymptotic normality}
\label{subsec:GMoM_consistency_iormality}

For each sufficiently small $\beta>0$, we study the estimator~\eqref{eq:def_GMoM_estimator_main} under the weighting regime $\Omega_n \xrightarrow[]{\mathbb{P}} \Sigma_L(\theta^\star;\beta)^{-1}$, where $\Sigma_L(\theta^\star;\beta)$ is defined in~\eqref{eqn:def_Sigma_L}. By Lemma~\ref{lem:sigma_beta_positive_definite_small_beta}, proved in Appendix~\ref{subsec:low_snr_covariance_matrix}, this covariance matrix is positive definite for all sufficiently small $\beta$, so the construction is well defined.

\begin{lem}[Positivity for sufficiently small $\beta$]
\label{lem:sigma_beta_positive_definite_small_beta}
There exists $\beta_0>0$ such that $\Sigma_L(\theta^\star;\beta)\succ 0$ for all $0<\beta<\beta_0$.
\end{lem}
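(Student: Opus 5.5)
The plan is to show that, after the low-SNR normalization, the feature map $\psi$ does not depend on $\beta$ as a function of $y$. Its covariance at $\beta=0$ is then an explicit positive-definite matrix, and positivity for small $\beta$ follows by continuity.

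\textbf{Step 1 (form of $\psi$).} I will use the Hermite construction of Appendix~\ref{subsec:empirical_moments}. The single-observation debiased order-$j$ moment is $\beta^{-j}\sigma^{j}\,\mathrm{He}_j(y/\sigma)$, where $\mathrm{He}_j$ is the order-$j$ multivariate Hermite tensor, characterized by $\mathbb{E}[\mathrm{He}_j(x+\xi)]=x^{\otimes j}$ for $\xi\sim\mathcal N(0,I_d)$. Multiplying by the normalization $(\beta/\sigma)^j$ in~\eqref{eq:def_Mn_maintext} gives
\begin{align}
    \psi(y)=\bigl(\mathrm{He}_1(y/\sigma),\ldots,\mathrm{He}_L(y/\sigma)\bigr).
\end{align}
This is a fixed vector of polynomials in $y$; the parameters $\beta$ and $\theta^\star$ enter only through the law of $Y$.

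I will regard each $\mathrm{He}_j$ as taking values in the space of symmetric $j$-tensors, or equivalently in their unique entries, as the stacked feature map~\eqref{def:stacked_Hermite_feature_map} does. This matters: on the full tensor space $(\mathbb{R}^d)^{\otimes j}$ the covariance is trivially singular along antisymmetric directions, so positivity can only hold on the symmetric coordinates.

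\textbf{Step 2 (the case $\beta=0$).} At $\beta=0$ we have $Y/\sigma=\xi\sim\mathcal N(0,I_d)$, and $M(\theta^\star;0)=0$ for every component order $j\ge1$. By Gaussian orthogonality of Hermite tensors,
\begin{align}
    \mathbb{E}\bigl[\langle a_j,\mathrm{He}_j(\xi)\rangle\langle a_k,\mathrm{He}_k(\xi)\rangle\bigr]=\delta_{jk}\,j!\,\langle a_j,a_k\rangle_F
\end{align}
for symmetric tensors $a_j$ and $a_k$. Hence, for a coefficient vector $a=(a_1,\ldots,a_L)$ of symmetric tensors,
\begin{align}
    a^\top\Sigma_L(\theta^\star;0)\,a=\sum_{j=1}^L j!\,\|a_j\|_F^2 .
\end{align}
So $\Sigma_L(\theta^\star;0)$ is block diagonal with positive-definite blocks, and $\lambda_{\min}(\Sigma_L(\theta^\star;0))\ge c_0>0$ in the symmetric coordinates. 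If the feature map uses unique entries with multiplicity weights, $c_0$ changes only by fixed combinatorial factors.

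\textbf{Step 3 (continuity in $\beta$).} Each entry of $\Sigma_L(\theta^\star;\beta)$ has the form
\begin{align}
    \int\!\mathbb{E}_\xi\bigl[P\bigl(\beta A(z)\theta^\star/\sigma+\xi\bigr)\bigr]\,\mu(dz)
\end{align}
minus products of means, where $P$ is a polynomial of degree at most $2L$. The inner Gaussian expectation is a polynomial in $\beta A(z)\theta^\star/\sigma$. By Assumption~\ref{ass:integrability_max}, $\|A(z)\theta^\star\|\le a_{\max}\|\theta^\star\|$ for a.e.\ $z$, so every entry is a polynomial in $\beta$ with bounded coefficients. This gives $\|\Sigma_L(\theta^\star;\beta)-\Sigma_L(\theta^\star;0)\|_{\mathrm{op}}\le C\beta$ for $\beta\le1$.

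By Weyl's inequality, $\lambda_{\min}(\Sigma_L(\theta^\star;\beta))\ge c_0-C\beta>0$ for all $\beta<\beta_0\triangleq\min(1,c_0/(2C))$, which proves the lemma.

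\textbf{Main obstacle.} The only delicate point is the bookkeeping in Step 2. I must confirm that the normalized feature $(\beta/\sigma)^j\widehat T_j$ of a single observation is exactly $\mathrm{He}_j(y/\sigma)$, with no leftover $\beta$-dependence. I must also fix the coordinates, symmetric tensors or unique entries, in which $\Sigma_L$ is taken to be nonsingular. The rest of the argument is routine.
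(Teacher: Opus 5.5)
Your proposal is correct and follows essentially the paper's route: the pure-noise covariance $\Sigma_L^{(0)}=\operatorname{diag}(1!I_{d_1},\ldots,L!I_{d_L})$ is computed by Hermite orthogonality in symmetric-tensor coordinates, and Weyl's inequality is then applied using $\Sigma_L(\theta^\star;\beta)\to\Sigma_L^{(0)}$. Your Step 3 observes that each entry is a polynomial in $\beta$ with bounded coefficients, which justifies this convergence, with an explicit $O(\beta)$ rate; the paper only asserts it.
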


Thus, for sufficiently small $\beta$, the covariance matrix $\Sigma_L(\theta^\star;\beta)$ is invertible, and the remaining requirement for consistency is identifiability by the selected moments. The next proposition distinguishes between global consistency, which requires the truncated moment map to identify the true equivalence class globally, and local consistency, which only requires local identifiability near $[\theta^\star]$. The proof is deferred to Appendix~\ref{sec:proof_prop_GMoM_model_consistency}.

\begin{proposition}[Consistency of the small-$\beta$ GMoM estimator]
\label{prop:GMoM_model_consistency}
Assume Assumption~\ref{ass:integrability_max} holds. Let $\widehat{\theta}_{\mathrm{GMoM}}$ be defined by~\eqref{eq:def_GMoM_estimator_main}, let $\beta>0$ be sufficiently small so that $\Sigma_L(\theta^\star;\beta)$ is invertible, and suppose that $\Omega_n\xrightarrow[]{\mathbb{P}}\Sigma_L(\theta^\star;\beta)^{-1}$. Then, the following statements hold. 

\begin{enumerate}
    \item \emph{(Global consistency).} If $L\ge r_{\mathrm{glob}}$, then as $n \to \infty$,
    \begin{align}
        d_{\mathrm{eq}}\bigl(\widehat\theta_{\mathrm{GMoM}},\theta^\star\bigr) \xrightarrow[]{\mathbb{P}}0 .
    \end{align}

    \item \emph{(Local consistency).} If $L\ge r_{\mathrm{loc}}(\theta^\star)$, then there exists a compact neighborhood $\mathcal{U}$ of $\theta^\star$  such that any local GMoM estimator $\widehat\theta_{\mathrm{GMoM}}^{\mathrm{loc}} \in \argmin_{\theta\in\mathcal{U}} Q_n(\theta; \mathcal{Y})$ satisfies
    \begin{align}
        d_{\mathrm{eq}}\bigl(\widehat\theta_{\mathrm{GMoM}}^{\mathrm{loc}},\theta^\star\bigr) \xrightarrow[]{\mathbb{P}}0 .
    \end{align}
\end{enumerate}
\end{proposition}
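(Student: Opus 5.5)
The plan is to follow the standard extremum-estimator argument (Newey--McFadden style), applied on the quotient. For the global part, set the population criterion
\begin{align}
Q(\theta)\triangleq \bigl(M(\theta^\star;\beta)-M(\theta;\beta)\bigr)^\top \Sigma_L(\theta^\star;\beta)^{-1}\bigl(M(\theta^\star;\beta)-M(\theta;\beta)\bigr).
\end{align}
First, I would show uniform convergence $\sup_{\theta\in\Theta}|Q_n(\theta;\mathcal{Y})-Q(\theta)|\xrightarrow{\mathbb{P}}0$. By Proposition~\ref{prop:empirical_feature_clt_main} (or simply the weak law of large numbers), $M_n(\mathcal{Y})\xrightarrow{\mathbb{P}}M(\theta^\star;\beta)$. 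Moreover, $\Omega_n\xrightarrow{\mathbb{P}}\Sigma_L^{-1}$ by hypothesis, and this limit is invertible by Lemma~\ref{lem:sigma_beta_positive_definite_small_beta}. Since $\theta\mapsto M(\theta;\beta)$ is polynomial and $\Theta$ is compact (Assumption~\ref{ass:integrability_max}), $\sup_\theta\|M(\theta;\beta)\|<\infty$. Expanding the quadratic form then gives the uniform bound directly.

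Second, I would establish identification of the quotient. Because $\beta>0$ is fixed and each block of $M$ is a nonzero rescaling of $T_j$, we have $Q(\theta)=0$ iff $T_{\le L}(\theta)=T_{\le L}(\theta^\star)$. Since $L\ge r_{\mathrm{glob}}$, this holds iff $\theta\sim\theta^\star$. This uses the generic-signal clause of Definition~\ref{def:rglob}, so I would take $\theta^\star$ outside the exceptional set. The converse direction holds because equivalent parameters have equal latent-mean laws~\eqref{eq:obs_equivalence_latent_mean_law} and hence equal $T_j$.

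Third, I would run the consistency argument with $d_{\mathrm{eq}}$ in place of the Euclidean distance. For $\varepsilon>0$, the set $K_\varepsilon=\{\theta\in\Theta: d_{\mathrm{eq}}(\theta,\theta^\star)\ge\varepsilon\}$ should be closed in the compact set $\Theta$. I would argue this from continuity of $\theta\mapsto d_{\mathrm{eq}}(\theta,\theta^\star)=\inf_{\theta'\in[\theta^\star]}\|\theta-\theta'\|$; this uses only the infimum over the fixed class $[\theta^\star]$, which is the relevant quantity here up to the symmetry of the definition, and it is $1$-Lipschitz. On $K_\varepsilon$ the continuous function $Q$ is strictly positive, so $\eta\triangleq\min_{K_\varepsilon}Q>0$. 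Since $Q_n(\widehat\theta)\le Q_n(\theta^\star)\xrightarrow{\mathbb{P}}0$, uniform convergence gives $Q(\widehat\theta)\xrightarrow{\mathbb{P}}0$, and hence $\mathbb{P}(\widehat\theta\in K_\varepsilon)\le\mathbb{P}(Q(\widehat\theta)\ge\eta)\to0$.

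For the local part, I would replace global identification by local identification near the class. Since $L\ge r_{\mathrm{loc}}$, the map $DT_{\le L}(\theta^\star)$ is injective on $W$ by Definition~\ref{def:rloc}, and therefore $D_WM(\theta^\star;\beta)$ is injective. This step is the main obstacle, because tangent directions $T(\theta^\star)$ lie in the kernel. To handle it, I would use Assumption~\ref{ass:equiv_class_submanifold} to build a local slice $\theta=\gamma(u)+w$, with $\gamma(u)\in[\theta^\star]$ and $w\in W$ small, so that $M$ is constant along the class. Then
\begin{align}
\|M(\theta)-M(\theta^\star)\|=\|M(\gamma(u)+w)-M(\gamma(u))\|\ge c\|w\|.
\end{align}
The lower bound holds on a small compact neighborhood $\mathcal{U}$ by injectivity of the differential together with $C^1$ continuity. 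Since $d_{\mathrm{eq}}(\theta,\theta^\star)\le\|w\|$, this yields $Q(\theta)\ge c'\,d_{\mathrm{eq}}(\theta,\theta^\star)^2$ on $\mathcal{U}$, so zeros of $Q$ in $\mathcal{U}$ lie in $[\theta^\star]$. Repeating the third step with $\Theta$ replaced by the compact set $\mathcal{U}$ then gives local consistency. The delicate points are making the slice uniform, i.e.\ shrinking $\mathcal{U}$ so that the normal-coordinate inverse-function bound holds throughout, and handling the equivariance of $M$ along the class.
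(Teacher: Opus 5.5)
Your proposal is correct and takes the same route as the paper. Both show that $Q_n$ converges uniformly to the population criterion $Q$, identify $[\theta^\star]$ as the zero set of $Q$ (globally from $L\ge r_{\mathrm{glob}}$, locally from injectivity of $D_WM(\theta^\star;\beta)$ on $W$ plus a transversal slice), and then apply the standard extremum-estimator argument. Your version is more explicit than the paper's: you take $\theta^\star$ outside the exceptional set, and your closed-set step really uses $\operatorname{dist}(\theta,[\theta^\star])\ge d_{\mathrm{eq}}(\theta,\theta^\star)$, which suffices. Your quantitative bound $Q\ge c'\,d_{\mathrm{eq}}(\cdot,\theta^\star)^2$ on $\mathcal{U}$ fills in what the paper leaves to a bare appeal to the inverse function theorem.
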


The next theorem states the asymptotic normality of the optimally weighted GMoM estimator on the identifiable normal space $W$. It is the quotient-adapted specialization of the standard GMoM asymptotic normality result, with the moment-condition covariance $\Sigma_L(\theta^\star;\beta)$ and the restricted Jacobian $D_WM(\theta^\star;\beta)$ defined above. The proof is deferred to Appendix~\ref{app:proof_GMoM_model_normality}.

\begin{thm}[Asymptotic normality of the low-SNR GMoM estimator]
\label{thm:GMoM_model_normality}
Assume the conditions of Proposition~\ref{prop:GMoM_model_consistency} hold, and assume that $L\ge r_{\mathrm{loc}}(\theta^\star)$. Then, after choosing a representative of $\widehat\theta_{\mathrm{GMoM}}$ that converges to $\theta^\star$,
\begin{align}
    \sqrt{n}\,\Pi_W\bigl(\widehat\theta_{\mathrm{GMoM}}-\theta^\star\bigr) \xrightarrow[]{\mathcal{D}} \mathcal{N}\bigl(0,\Sigma_{\mathrm{GMoM}}(\theta^\star;\beta)\bigr),
\end{align}
where $\Sigma_{\mathrm{GMoM}}(\theta^\star;\beta) \triangleq \bigl(\mathcal{I}_{\mathrm{GMoM}}^{(W)}(\theta^\star;\beta)\bigr)^{-1}$ is defined in~\eqref{eqn:asymptotic_covariance_GMoM}, and $\mathcal{I}_{\mathrm{GMoM}}^{(W)}(\theta^\star;\beta)$ is the restricted GMoM information matrix defined in~\eqref{eq:def_GMoM_information}.
\end{thm}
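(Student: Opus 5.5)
The plan is to follow the standard GMoM asymptotic-normality argument (Hansen; Newey--McFadden) and adapt it to the quotient by working in local normal coordinates. The first step fixes a representative. By Proposition~\ref{prop:GMoM_model_consistency} (local consistency, since $L\ge r_{\mathrm{loc}}(\theta^\star)$), we have $d_{\mathrm{eq}}(\widehat\theta_{\mathrm{GMoM}},\theta^\star)\to 0$ in probability. Assumption~\ref{ass:equiv_class_submanifold} gives the $C^2$ submersion $\pi$, and hence a slice: there is a neighborhood $\mathcal{N}$ of $\theta^\star$ in the affine slice $\theta^\star+W$ such that every class close to $[\theta^\star]$ meets $\mathcal{N}$ in exactly one point near $\theta^\star$. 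For finite classes this is trivial, since $W=\mathbb{R}^m$ and we just pick the nearest representative. Because $M(\theta;\beta)$ is constant on equivalence classes, $Q_n$ is class-invariant. Replacing $\widehat\theta_{\mathrm{GMoM}}$ by its slice representative $\widetilde\theta=\theta^\star+\widehat w$ with $\widehat w\in W$ therefore gives a minimizer of $w\mapsto Q_n(\theta^\star+w)$ over a neighborhood of $0$ in $W$, with $\widehat w\to 0$ in probability. Moreover, $\Pi_W(\widetilde\theta-\theta^\star)=\widehat w$ exactly. For other convergent representatives the tangential correction is $O(\|\widehat w\|^2)$ and does not matter at the $\sqrt n$ scale.

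Next comes the first-order condition and its linearization. The map $M(\cdot;\beta)$ is polynomial in $\theta$, since each $T_j$ is a polynomial, so it is smooth. With probability tending to one, $\widehat w$ is interior, and therefore
\begin{align}
D_WM(\widetilde\theta;\beta)^\top\Omega_n\bigl(M_n(\mathcal{Y})-M(\widetilde\theta;\beta)\bigr)=0 .
\end{align}
A mean-value expansion along the segment from $\theta^\star$ to $\widetilde\theta$ gives
\begin{align}
M(\widetilde\theta;\beta)-M(\theta^\star;\beta)=\bar D\,\widehat w,
\end{align}
where $\bar D\to D_WM(\theta^\star;\beta)$ in probability. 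Substituting, we obtain
\begin{align}
\bigl(D_W^\top\Omega_n\bar D\bigr)\sqrt n\,\widehat w = D_W^\top\Omega_n\sqrt n\bigl(M_n(\mathcal{Y})-M(\theta^\star;\beta)\bigr),
\end{align}
with $D_W$ evaluated at $\widetilde\theta$.

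The key obstacle is invertibility of $D_WM(\theta^\star;\beta)^\top\Sigma_L^{-1}D_WM(\theta^\star;\beta)$. Here the hypothesis $L\ge r_{\mathrm{loc}}$ enters. By Definition~\ref{def:rloc}, every nonzero $h\in W$ has $DT_j(\theta^\star)[h]\neq 0$ for some $j\le r_{\mathrm{loc}}\le L$. Since $\beta>0$, the scalings $(\beta/\sigma)^j$ are nonzero, so $D_WM(\theta^\star;\beta)$ is injective on $W$. Lemma~\ref{lem:sigma_beta_positive_definite_small_beta} gives $\Sigma_L\succ0$, so the product is positive definite. By continuity, the random matrix $D_W^\top\Omega_n\bar D$ converges in probability to $\mathcal{I}^{(W)}_{\mathrm{GMoM}}$ and is invertible with probability tending to one.

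Finally, we conclude with Slutsky's theorem and Proposition~\ref{prop:empirical_feature_clt_main}:
\begin{align}
\sqrt n\,\widehat w\xrightarrow{\mathcal D}\mathcal{N}\Bigl(0,\ \mathcal{I}^{-1}D_W^\top\Sigma_L^{-1}\Sigma_L\Sigma_L^{-1}D_W\mathcal{I}^{-1}\Bigr)=\mathcal{N}\bigl(0,\mathcal{I}^{-1}\bigr),
\end{align}
where $\mathcal{I}=\mathcal{I}^{(W)}_{\mathrm{GMoM}}(\theta^\star;\beta)$. The sandwich collapses precisely because of the optimal weighting. This is $\Sigma_{\mathrm{GMoM}}(\theta^\star;\beta)$ as defined in~\eqref{eqn:asymptotic_covariance_GMoM}. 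Beyond the injectivity step, the one point I would handle carefully is existence of the slice in the continuous-group case. It follows from the constant-rank and submersion structure in Assumption~\ref{ass:equiv_class_submanifold} via the implicit function theorem applied to $\pi$ restricted to $\theta^\star+W$.
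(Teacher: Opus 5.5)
Your proposal is correct and is essentially the paper's argument: the first-order condition along $W$, linearization of $M(\cdot;\beta)$, injectivity of $D_WM(\theta^\star;\beta)$ from $L\ge r_{\mathrm{loc}}$ together with $\Sigma_L\succ 0$, and then the CLT for $M_n$ and Slutsky, with the sandwich collapsing under optimal weighting; your explicit slice and exact mean-value expansion also let you skip the paper's separate $O_{\mathbb{P}}(n^{-1/2})$ rate lemma and handle the tangential component that the paper's Taylor step silently ignores. The one slip is your aside about other representatives: for continuous orbits the correction is quadratic in the tangential displacement, not in $\widehat w$ (rotating by an angle of order $n^{-1/4}$ along an $\mathrm{SO}(2)$ orbit shifts $\sqrt n\,\Pi_W(\cdot)$ by a nonvanishing constant), so state the result for the slice representative, i.e.\ for a suitable choice of representative, which is all the theorem needs.
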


Thus, the asymptotic covariance is the inverse of the local quadratic information induced by the optimally weighted moment criterion on $W$. The remaining task is to compare this GMoM information, in the low-SNR regime, with the observed Fisher information restricted to $W$.

\subsection{Asymptotic efficiency of the GMoM estimator}
\label{subsec:GMoM_asymptotic_efficiency}

The next theorem is the main  result of the paper. It shows that, for sufficiently small $\mathrm{SNR}$, the optimally weighted GMoM information matrix matches the observed Fisher information up to a higher-order global remainder. Thus, the selected moment features capture the full leading local statistical information of the likelihood. The proof is provided in Appendix~\ref{app:proof_H_fixed_beta_fisher_mo}.

\begin{thm}[Low-SNR Fisher--GMoM information discrepancy]
\label{thm:H_fixed_beta_fisher_moment_identity}
Let $L\ge 1$. Then, for $\beta>0$ sufficiently small,
\begin{align}
    \mathcal{I}_{\mathrm{obs}}^{(W)}(\theta^\star;\beta) = \mathcal{I}_{\mathrm{GMoM}}^{(W)}(\theta^\star;\beta) + R_\beta,
\end{align}
where $R_\beta:W\to W$ is self-adjoint, positive semidefinite, and satisfies
\begin{align}
    \|R_\beta\|_{\mathrm{op}} = O\left(\mathrm{SNR}^{L+1}\right).
\end{align}
\end{thm}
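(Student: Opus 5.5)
The plan is to view both information operators as Gram forms in $L^2(p_{\theta^\star,\beta})$ and to identify $\mathcal{I}_{\mathrm{GMoM}}^{(W)}$ as the information carried by the projection of the score onto the span of the centered features. For $h\in W$, let $s_h(Y)\triangleq \langle h,\nabla_\theta\log p_{\theta,\beta}(Y)\rangle|_{\theta^\star}$ denote the directional score. The key identity is that, for each feature coordinate, differentiating under the integral gives
\begin{align}
    DM(\theta^\star;\beta)[h]=\mathbb{E}_{Y\sim p_{\theta^\star,\beta}}\bigl[\psi(Y)\,s_h(Y)\bigr]=\mathrm{Cov}\bigl(\psi(Y),s_h(Y)\bigr),
\end{align}
since $\mathbb{E}[s_h]=0$; this is justified by Assumption~\ref{ass:integrability_max}, because all moments are finite and the Gaussian density is smooth. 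Then
\begin{align}
    \langle h,\mathcal{I}_{\mathrm{GMoM}}^{(W)}g\rangle=\mathrm{Cov}(s_h,\psi)\,\Sigma_L^{-1}\,\mathrm{Cov}(\psi,s_g)=\mathbb{E}\bigl[(P s_h)(P s_g)\bigr],
\end{align}
where $P$ is the $L^2(p_{\theta^\star,\beta})$-orthogonal projection onto $\mathrm{span}\{\psi_i-\mathbb{E}\psi_i\}$. Lemma~\ref{lem:sigma_beta_positive_definite_small_beta} makes $\Sigma_L$ invertible, so this projection is well defined. Consequently
\begin{align}
    R_\beta=\mathcal{I}_{\mathrm{obs}}^{(W)}-\mathcal{I}_{\mathrm{GMoM}}^{(W)},\qquad \langle h,R_\beta g\rangle=\mathbb{E}\bigl[((I-P)s_h)((I-P)s_g)\bigr].
\end{align}
This is the Pythagorean decomposition, and it shows at once that $R_\beta$ is self-adjoint and positive semidefinite. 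It remains to bound $\|R_\beta\|_{\mathrm{op}}\le\sup_{\|h\|=1}\|(I-P)s_h\|_{L^2}^2$.

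For the size bound, I would expand the score in Hermite polynomials. Write $x=y/\sigma$ and $\lambda=\beta/\sigma=\mathrm{SNR}^{1/2}$. Then $p_{\theta,\beta}(y)/\varphi_\sigma(y)=\mathbb{E}_Z\exp(\lambda\langle A(Z)\theta,x\rangle-\lambda^2\|A(Z)\theta\|^2/2)=\sum_{k\ge0}\frac{\lambda^k}{k!}\langle T_k(\theta),He_k(x)\rangle$, where $He_k$ is the Hermite tensor. The score is $s_h=\partial_h\log(p_\theta/\varphi_\sigma)$. Expanding the logarithm shows that $s_h(x)=\sum_{k\ge1}\lambda^k q_k(x)$, where $q_k$ is a polynomial of degree at most $k$ in $x$. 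Its coefficients are built from $DT_j[h]$ and $T_i$ with $i+j\le k$, hence are bounded uniformly in $\|h\|\le1$. The features are $\psi_j=\lambda^j\widehat T_j$-type Hermite combinations, so $\mathrm{span}\{1,\psi\}$ equals the span of all Hermite tensors of degree at most $L$, that is, all polynomials in $x$ of degree at most $L$. In particular, for $\lambda>0$ the centered span $\{\psi-\mathbb{E}\psi\}$, together with the constants, coincides with the polynomials of degree at most $L$. Since $\mathbb{E}s_h=0$, projecting $s_h$ onto the centered span is the same as projecting onto the full polynomial space, so $P s_h$ is the best $L^2(p_{\theta^\star,\beta})$ approximation of $s_h$ by polynomials of degree at most $L$. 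Hence
\begin{align}
    \|(I-P)s_h\|_{L^2}\le\Bigl\|\sum_{k>L}\lambda^k q_k\Bigr\|_{L^2(p_{\theta^\star,\beta})}=O(\lambda^{L+1}),
\end{align}
which gives $\|R_\beta\|_{\mathrm{op}}=O(\lambda^{2L+2})=O(\mathrm{SNR}^{L+1})$.

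The main obstacle is making this tail bound rigorous and uniform in $h$. One needs the series $\sum_k\lambda^kq_k$ to converge in $L^2(p_{\theta^\star,\beta})$, with the tail after order $L$ bounded by $C\lambda^{L+1}$, while $p_{\theta^\star,\beta}$ itself depends on $\lambda$. I would avoid expanding the logarithm directly. Instead, write $s_h=\partial_h p/p$ and use that, by boundedness of $A$, the ratio $p_{\theta^\star,\beta}/\varphi_\sigma\ge\exp(-\lambda a_{\max}\|\theta^\star\|\|x\|-\lambda^2C)$ via Jensen. Then take a Taylor remainder in $\lambda$ of $\partial_h p/p$ at order $L$: it is an explicit function bounded by $C\lambda^{L+1}(1+\|x\|)^{L+1}e^{c\lambda\|x\|}$, uniformly for $\|h\|\le1$. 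Finally, integrate against $p_{\theta^\star,\beta}\le\varphi_\sigma e^{c\lambda\|x\|}$ to get the $L^2$ bound, with constants uniform for $\beta<\beta_0$ using the compactness of $\Theta$.
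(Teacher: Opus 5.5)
Your proposal is correct and follows essentially the same route as the paper. Via $DM(\theta^\star;\beta)h=\mathbb{E}[\psi(Y)S_{\theta^\star,\beta}(Y;h)]$, you identify $\mathcal{I}_{\mathrm{GMoM}}^{(W)}$ as the Gram form of the $L^2(p_{\theta^\star,\beta})$-projection of the score onto the centered feature span, use the Pythagorean split to get $R_\beta\succeq 0$, and bound the residual by the degree-$>L$ tail of the score's polynomial (Hermite) expansion, as in the paper's score-truncation and score-projection lemmas. Two of your choices, the direct Taylor expansion of $\partial_h R/R$ in $\lambda$ with the Jensen lower bound on $R$, and the explicit observation that $\mathbb{E}s_h=0$ lets you project onto all polynomials of degree at most $L$ (constants included), justify steps the paper treats more loosely.
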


We next combine Theorem~\ref{thm:H_fixed_beta_fisher_moment_identity} with the layerwise expansion of the observed Fisher information from Proposition~\ref{prop:fisher_layer_block_asymptotics}. This yields the corresponding blockwise structure for the GMoM information matrix. 

\begin{corollary}
[Layerwise block asymptotics of the GMoM information matrix]
\label{cor:GMoM_layer_block_asymptotics}
Let $L\ge r_{\mathrm{loc}}$. Then, for every $1\leq k,\ell\leq r_{\mathrm{loc}}$, every $h\in U_k$, and every $g \in U_\ell$,
\begin{align}
    \label{eq:block_GMoM_layered}
    \big\langle h,\mathcal{I}_{\mathrm{GMoM}}^{(W)}(\theta^\star;\beta) g \big\rangle =
    \begin{cases}
        \displaystyle
        \frac{\mathrm{SNR}^{k}}{k!}\, \big\langle DT_k(\theta^\star)[h],DT_k(\theta^\star)[g]\big\rangle_F + O\bigl(\mathrm{SNR}^{k+1}\bigr),
        & k=\ell,\\[2ex]       O\bigl(\mathrm{SNR}^{(k+\ell+1)/2}\bigr),
        & |k-\ell|=1,\\[1.25ex]   O\bigl(\mathrm{SNR}^{(k+\ell+2)/2}\bigr),
        & |k-\ell|\geq 2.
    \end{cases}
\end{align}
as $\mathrm{SNR}\to 0$.
\end{corollary}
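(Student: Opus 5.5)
The plan is to derive the corollary directly from Theorem~\ref{thm:H_fixed_beta_fisher_moment_identity} and Proposition~\ref{prop:fisher_layer_block_asymptotics}, by subtracting the remainder blockwise. Fix $L\ge r_{\mathrm{loc}}$, indices $1\le k,\ell\le r_{\mathrm{loc}}$, and vectors $h\in U_k$, $g\in U_\ell$. Theorem~\ref{thm:H_fixed_beta_fisher_moment_identity} gives
\begin{align}
\big\langle h,\mathcal{I}_{\mathrm{GMoM}}^{(W)}(\theta^\star;\beta)g\big\rangle = \big\langle h,\mathcal{I}_{\mathrm{obs}}^{(W)}(\theta^\star;\beta)g\big\rangle - \langle h,R_\beta g\rangle .
\end{align}
By Cauchy--Schwarz, the last term satisfies $|\langle h,R_\beta g\rangle|\le \|R_\beta\|_{\mathrm{op}}\|h\|\|g\|=O(\mathrm{SNR}^{L+1})$, and $h,g$ are fixed vectors. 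So it suffices to check that $\mathrm{SNR}^{L+1}$ is no larger than each error term in~\eqref{eq:block_fisher_layered}. The corollary then follows by substituting the expansion from Proposition~\ref{prop:fisher_layer_block_asymptotics}.

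We compare exponents using $k,\ell\le r_{\mathrm{loc}}\le L$.
\begin{itemize}
\item If $k=\ell$, the error in~\eqref{eq:block_fisher_layered} is $O(\mathrm{SNR}^{k+1})$. Since $k+1\le L+1$, we have $\mathrm{SNR}^{L+1}\le \mathrm{SNR}^{k+1}$ for $\mathrm{SNR}\le 1$. The leading term $\frac{\mathrm{SNR}^k}{k!}\langle DT_k(\theta^\star)[h],DT_k(\theta^\star)[g]\rangle_F$ is therefore unchanged.
\item If $|k-\ell|=1$, then $(k+\ell+1)/2=\max(k,\ell)\le L<L+1$.
\item If $|k-\ell|\ge 2$, then $(k+\ell+2)/2\le \max(k,\ell)\le L$, because $\min(k,\ell)\le\max(k,\ell)-2$.
\end{itemize}
In every case the remainder $O(\mathrm{SNR}^{L+1})$ is absorbed into the stated error term.

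This argument does not use the positive semidefiniteness of $R_\beta$; the operator-norm bound alone suffices. The only place where care is needed is the uniformity of the constants. Because the statement is for fixed $h,g$ and the layers are finite-dimensional, we can bound uniformly over unit vectors in $U_k$ and $U_\ell$. The $O(\cdot)$ constants then depend only on $\theta^\star$ and the model, not on the particular pair $(h,g)$.

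None of these steps is a real obstacle, since all the analytic work sits in Theorem~\ref{thm:H_fixed_beta_fisher_moment_identity} and Proposition~\ref{prop:fisher_layer_block_asymptotics}.
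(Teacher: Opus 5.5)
Your proposal is correct and follows the same route as the paper. The paper obtains the corollary by combining the $O(\mathrm{SNR}^{L+1})$ discrepancy of Theorem~\ref{thm:H_fixed_beta_fisher_moment_identity} with the Fisher block expansion of Proposition~\ref{prop:fisher_layer_block_asymptotics}, and your exponent checks, which use $k,\ell\le r_{\mathrm{loc}}\le L$, supply the absorption details the paper leaves implicit.
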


These two results identify the precise sense in which optimally weighted GMoM is asymptotically efficient in low SNR. Theorem~\ref{thm:H_fixed_beta_fisher_moment_identity} shows that the GMoM information matrix and the Fisher information differ only at a strictly higher order, while Corollary~\ref{cor:GMoM_layer_block_asymptotics} shows that they therefore share the same leading blockwise asymptotics. In particular, on every layer $U_k$, the optimally weighted GMoM estimator has the same leading local information as maximum likelihood. Thus, in the low-SNR regime, GMoM does not merely recover identifiability: with the correct weighting, it matches the first-order statistical efficiency of likelihood-based estimation.

\section{Algorithm and numerical validation}
\label{sec:algorithm_numerics}

We now turn the asymptotic theory of Section~\ref{sec:GMoM_efficiency} into a concrete estimator and validate its main predictions numerically. We validate the theory at two complementary levels: first, at the population level, by directly comparing the Fisher and GMoM information operators; and second, in finite samples, by comparing the resulting estimator with maximum likelihood.

\subsection{Construction of a data-driven weighting matrix}
\label{subsec:weighting_matrix}
The theory is formulated with the population-optimal weighting matrix $\Sigma_L(\theta^\star;\beta)^{-1}$, where $\Sigma_L(\theta^\star;\beta)$ is the covariance of the single-observation feature vector $\psi(Y)$~\eqref{eqn:def_Sigma_L} under the unknown data-generating law $Y\sim p_{\theta^\star,\beta}$. Although this covariance is unknown, the feature vector itself is observable: it is computed from the normalized, noise-debiased moment features used to form $M_n(\mathcal{Y})$. Thus, the oracle theory can be converted into a feasible data-driven estimator by replacing $\Sigma_L(\theta^\star;\beta)$ with the empirical covariance of the same features. Indeed, Theorem~\ref{thm:GMoM_model_normality} only requires $\Omega_n \xrightarrow[]{\mathbb{P}} \Sigma_L(\theta^\star;\beta)^{-1}$. Since $M_n(\mathcal{Y})=\frac{1}{n}\sum_{i=1}^n \psi(y_i)$, where $\psi(y)$ is the stacked moment-feature vector up to order $L$ defined in~\eqref{eq:def_Mn_maintext}, we estimate
$\Sigma_L(\theta^\star;\beta)$ by
\begin{align}
    \label{eqn:def_sample_covariance}
    \widehat{\Sigma}_{L,n}(\mathcal{Y}) \triangleq \frac{1}{n}\sum_{i=1}^n \bigl(\psi(y_i)-M_n(\mathcal{Y})\bigr)\bigl(\psi(y_i)-M_n(\mathcal{Y})\bigr)^\top.
\end{align}
To ensure stable inversion in finite samples, we regularize this matrix by adding a vanishing ridge term $\lambda_n$, as specified in the following lemma.

\begin{lem}[Consistent empirical weighting matrix]
\label{lem:weighting_matrix}
Assume the conditions of Proposition~\ref{prop:empirical_feature_clt_main}. Let $\lambda_n\ge 0$ satisfy $\lambda_n\to 0$, and define
\begin{align}
    \Omega_n\triangleq    \bigl(\widehat{\Sigma}_{L,n}(\mathcal{Y})+\lambda_n I\bigr)^{-1}.
    \label{eq:def_omega_n}
\end{align}
Then, $\Omega_n\xrightarrow[]{\mathbb{P}}\Sigma_L(\theta^\star;\beta)^{-1}$.
\end{lem}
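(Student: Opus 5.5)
The argument is the law of large numbers for the sample covariance, followed by continuity of matrix inversion at an invertible limit. We work at fixed $\beta>0$ small enough that Lemma~\ref{lem:sigma_beta_positive_definite_small_beta} gives $\Sigma_L(\theta^\star;\beta)\succ 0$, which is implicit in the conclusion since otherwise $\Sigma_L^{-1}$ would not exist.

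\textbf{Step 1: consistency of $\widehat\Sigma_{L,n}$.} Write
\begin{align}
    \widehat{\Sigma}_{L,n}(\mathcal{Y})=\frac1n\sum_{i=1}^n\psi(y_i)\psi(y_i)^\top - M_n(\mathcal{Y})M_n(\mathcal{Y})^\top .
\end{align}
Each entry of $\psi(y)\psi(y)^\top$ is a polynomial in $y$ of degree at most $2L$. Under Assumption~\ref{ass:integrability_max}, $Y=\beta A(Z)\theta^\star+\sigma\xi$ with $\|A(Z)\theta^\star\|\le a_{\max}\sup_{\Theta}\|\theta\|<\infty$, because $\Theta$ is compact, and $\xi$ is Gaussian. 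Hence $Y$ has finite moments of every order, and $\mathbb{E}\|\psi(Y)\psi(Y)^\top\|<\infty$. The weak law of large numbers then gives
\begin{align}
    \frac1n\sum_{i=1}^n\psi(y_i)\psi(y_i)^\top\xrightarrow{\mathbb P}\mathbb{E}[\psi(Y)\psi(Y)^\top],
    \qquad
    M_n(\mathcal{Y})\xrightarrow{\mathbb P}M(\theta^\star;\beta).
\end{align}
By the continuous mapping theorem, $\widehat\Sigma_{L,n}\xrightarrow{\mathbb P}\mathbb{E}[\psi\psi^\top]-MM^\top=\Sigma_L(\theta^\star;\beta)$. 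Since $\lambda_n\to0$ deterministically, it follows that $\widehat\Sigma_{L,n}+\lambda_nI\xrightarrow{\mathbb P}\Sigma_L(\theta^\star;\beta)$.

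\textbf{Step 2: inversion.} Matrix inversion is continuous on the open set of invertible matrices, and the limit $\Sigma_L(\theta^\star;\beta)$ lies in that set. Let $\lambda_0>0$ be its smallest eigenvalue. By Weyl's inequality, on the event $\{\|\widehat\Sigma_{L,n}+\lambda_nI-\Sigma_L\|_{\mathrm{op}}<\lambda_0/2\}$ the matrix $\widehat\Sigma_{L,n}+\lambda_nI$ is invertible, and this event has probability tending to one. On the complementary event, of vanishing probability, $\Omega_n$ may be defined arbitrarily; this does not affect convergence in probability. The continuous mapping theorem then yields $\Omega_n\xrightarrow{\mathbb P}\Sigma_L(\theta^\star;\beta)^{-1}$.

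\textbf{Main obstacle.} The only delicate point is the possible singularity of the finite-sample matrix when $\lambda_n=0$. This is handled by the high-probability invertibility event in Step 2, so the limiting invertibility from Lemma~\ref{lem:sigma_beta_positive_definite_small_beta} is the essential input. The moment finiteness in Step 1 is routine given boundedness of $A$ and compactness of $\Theta$.
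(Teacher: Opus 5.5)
Your proof is correct and follows the same route as the paper: the law of large numbers gives $\widehat{\Sigma}_{L,n}(\mathcal{Y})+\lambda_n I\xrightarrow{\mathbb{P}}\Sigma_L(\theta^\star;\beta)$, and continuity of matrix inversion at the nonsingular limit then yields $\Omega_n\xrightarrow{\mathbb{P}}\Sigma_L(\theta^\star;\beta)^{-1}$. You also fill in details the paper leaves implicit: the finite-moment check behind the law of large numbers, the appeal to Lemma~\ref{lem:sigma_beta_positive_definite_small_beta} for invertibility of the limit, and the vanishing-probability event on which the finite-sample matrix may be singular when $\lambda_n=0$.
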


\begin{proof}[Proof of Lemma~\ref{lem:weighting_matrix}]
By the law of large numbers, $\widehat{\Sigma}_{L,n}(\mathcal{Y})\xrightarrow[]{\mathbb{P}}\Sigma_L(\theta^\star;\beta)$. Since $\lambda_n\to 0$, we also have $\widehat{\Sigma}_{L,n}(\mathcal{Y})+\lambda_n I\xrightarrow[]{\mathbb{P}}\Sigma_L(\theta^\star;\beta)$. Because $\Sigma_L(\theta^\star;\beta)$ is nonsingular, matrix inversion is continuous at that point, and the conclusion follows.
\end{proof}

This yields a feasible GMoM estimator, obtained by replacing the population-optimal weighting matrix with the inverse of its empirical covariance estimator. Algorithm~\ref{alg:feasible_GMoM} summarizes this feasible construction.

\begin{algorithm}[t]
\caption{Feasible low-SNR GMoM}
\label{alg:feasible_GMoM}
\textbf{Input:} observations $y_1,\dots,y_n$, known $\sigma$, signal level $\beta$, truncation order $L \triangleq  L_{\mathrm{mom}} \geq r_{\mathrm{loc}}$, and ridge parameter $\lambda_n\ge 0$.

\textbf{Output:} feasible  GMoM estimator $\widehat{\theta}_{\mathrm{GMoM}}$.

\begin{enumerate}
    \item Form the empirical feature vector $M_n(\mathcal{Y})=\frac{1}{n}\sum_{i=1}^n \psi(y_i)$.
    \item Estimate the covariance of the feature vector $\widehat{\Sigma}_{L,n}(\mathcal{Y})$ by~\eqref{eqn:def_sample_covariance}.
    
    \item Set $\Omega_n=(\widehat{\Sigma}_{L,n}(\mathcal{Y})+\lambda_n I)^{-1}$.
    \item Compute $\widehat{\theta}_{\mathrm{GMoM}}$ according to~\eqref{eq:def_GMoM_criterion_main}-\eqref{eq:def_GMoM_estimator_main}.
\end{enumerate}
\end{algorithm}

\subsection{Population-level validation of the information mechanism}
\label{subsec:population_information_validation}

Before turning to finite-sample behavior, we validate directly at the population level the operator identities underlying the asymptotic theory. The purpose of this experiment is not to assess estimation error, but to affirm that the GMoM information matrix reproduces the same low-SNR structure as the observed Fisher information. By working entirely at the population level, it isolates the operator identities from finite-sample effects such as optimization error, initialization, and sampling fluctuations. This validation underlies Figure~\ref{fig:1}.

The prediction of Theorem~\ref{thm:H_fixed_beta_fisher_moment_identity} is that, when the GMoM feature vector contains all moments up to order $L$, the projected Fisher and GMoM information operators satisfy
\begin{align}
    \Bigl\| \mathcal{I}_{\mathrm{obs}}^{(W)}(\theta^\star;\beta) -\mathcal{I}_{\mathrm{GMoM}}^{(W)}(\theta^\star;\beta) \Bigr\|_{\mathrm{op}} = O\left(\mathrm{SNR}^{L+1}\right).
\end{align}
Thus, each additional moment order included in the GMoM construction  improves the Fisher-GMoM discrepancy by one further power of $\mathrm{SNR}$.

We examine this prediction for three representative models: an equal-weight Gaussian mixture model, the permutation-group model, and cyclic MRA. In each case, we compare the projected observed Fisher information $\mathcal{I}_{\mathrm{obs}}^{(W)}$, defined in~\eqref{eq:def_I_obs}, with the projected GMoM information operator $\mathcal{I}_{\mathrm{GMoM}}^{(W)}$, defined in~\eqref{eq:def_GMoM_information}, over a grid of small SNR levels. The GMoM feature map is the stacked multivariate Hermite tensor feature vector $\psi(y)$ from~\eqref{eq:def_Mn_maintext}, truncated at moment order $L$.

For each model and each SNR, the population feature mean and covariance were evaluated by exact averaging over the latent structure together with Gauss-Hermite quadrature for the Gaussian expectation. In the Gaussian mixture experiment, this averaging is over mixture components; in the permutation and cyclic MRA experiments, it is over the corresponding finite group elements. The Jacobian of the population feature mean with respect to the parameter was computed numerically by centered finite differences, and the GMoM information matrix was then formed from the resulting Jacobian and feature covariance. Independently, the observed Fisher information was computed from the population score covariance under the same model.

Figure~\ref{fig:1} confirms the predicted low-SNR scaling. In all three models, the operator-norm discrepancy $\bigl\|\mathcal{I}_{\mathrm{obs}}^{(W)}(\theta^\star;\beta) -\mathcal{I}_{\mathrm{GMoM}}^{(W)}(\theta^\star;\beta) \bigr\|_{\mathrm{op}}$ decays with the slope predicted by the theory, namely $O(\mathrm{SNR}^{L+1})$ for moment cutoff $L$. Moreover, increasing the cutoff from $L$ to $L+1$ yields exactly one additional power of $\mathrm{SNR}$, in agreement with Theorem~\ref{thm:H_fixed_beta_fisher_moment_identity}.

\subsection{Finite-sample efficiency comparison}
\label{subsec:finite_sample_efficiency}

We now examine the finite-sample manifestation of the theory. This experiment has three goals. First, we test whether the feasible GMoM estimator in Algorithm~\ref{alg:feasible_GMoM} matches the finite-sample performance of the MLE in the low-SNR regime. Second, we quantify the loss incurred by replacing the inverse-covariance weighting prescribed by the theory with the naive identity weighting, a common default in moment-matching methods. Third, we assess the effect of truncating the moment feature vector below the minimal locally informative order required for efficient estimation.

Let $M_n^{(L)}(\mathcal{Y})$ and $M^{(L)}(\theta;\beta)$ denote the stacked empirical and population Hermite moment-feature vectors up to order $L$, respectively. We consider a $K=3$, $d=4$ Gaussian mixture model under Model~\ref{model:meanscaling}. For a range of SNR levels and sample sizes $n$, we generate repeated Monte Carlo trials and compare four estimators: (i) the maximum-likelihood estimator $\widehat{\theta}_{\mathrm{MLE}}$, (ii) the feasible \emph{third-order} GMoM estimator $\widehat{\theta}^{(L = 3)}_{\mathrm{GMoM}}$, (iii) the feasible \emph{second-order} GMoM estimator $\widehat{\theta}^{(L=2)}_{\mathrm{GMoM}}$; and (iv) the identity-weighted third-order GMoM estimator (that is $\Omega_n = I$),
\begin{align}
    \widehat{\theta}^{(\mathrm{id})}_{\mathrm{GMoM}} \in    \argmin_{\theta\in\Theta} \Bigl(M_{n}^{(L=3)}(\mathcal{Y})-M^{(L=3)}(\theta;\beta)\Bigr)^{\top} \Bigl(M_{n}^{(L=3)}(\mathcal{Y})-M^{(L=3)}(\theta;\beta)\Bigr).
\end{align}

In this model, the minimal local informative order is $r_{\mathrm{loc}}=3$~\cite{hsu2013learning,anandkumar2014tensor}. Accordingly, $L=3$ is the first moment cutoff that captures all locally identifiable directions. By contrast, $L=2$ lies below this threshold and is included as a deliberately truncated baseline. In light of Proposition~\ref{prop:GMoM_model_consistency}, this comparison is intended to illustrate the importance of using a moment cutoff that resolves all locally identifiable directions.

Since the model is identifiable only up to permutation of the mixture components, performance is evaluated after optimal alignment over permutations. For $N$ Monte Carlo repetitions, we report
\begin{align}
    \label{eq:def_norm_MSE}
    \mathrm{MSE}(\widehat{\theta}, \theta^\star) \triangleq \frac{1}{N\|\theta^\star\|^2} \sum_{i=1}^{N} \min_{\pi\in S_K} \bigl\|\pi\widehat{\theta}^{(i)}-\theta^\star\bigr\|^2 .
\end{align}
where $\widehat{\theta}^{(i)}$ denotes the estimate obtained in the $i$-th Monte Carlo repetition, $S_K$ is the permutation group on $K$ elements, and $\pi\widehat{\theta}^{(i)}$ denotes the estimator after permuting its $K$ mixture components according to $\pi\in S_K$. Throughout, we used Monte-Carlo $N = 200$ trials. 

In these numerical experiments, the MLE and GMoM objectives are minimized using MATLAB's \texttt{fminunc} with its quasi-Newton algorithm, \emph{initialized in a local neighborhood of the ground truth}. This local initialization allows us to isolate the asymptotic efficiency question studied here from the separate issue of global optimization.

Figure~\ref{fig:2} summarizes the main conclusions. Panel~(a) fixes the sample size at $n = 4 \times 10^5$ and varies the SNR. The feasible third-order estimator $\widehat{\theta}^{(L = 3)}_{\mathrm{GMoM}}$ closely tracks $\widehat{\theta}_{\mathrm{MLE}}$ throughout the tested range, whereas the identity-weighted estimator $\widehat{\theta}^{(\mathrm{id})}_{\mathrm{GMoM}}$ is less accurate. The gap between feasible and identity-weighted GMoM is more pronounced at higher SNR and decreases as the SNR becomes smaller. This behavior is consistent with the low-SNR covariance structure moment feature vector: as $\mathrm{SNR}\to 0$, $\Sigma_L(\theta^\star;\beta)$ approaches its pure-noise limit (see Proposition~\ref{prop:sigma0_closed_form_updated}), which is block diagonal and, under the normalization used here, close to the identity. In particular, the deviation of $\Sigma_L(\theta^\star;\beta)$ from this limiting covariance is of order $O(\mathrm{SNR})$. Thus, in the extreme low-SNR regime, even the identity weighting becomes an increasingly accurate proxy for optimal covariance weighting, while the advantage of the feasible optimal weight appears through higher-order low-SNR corrections and becomes more visible as the signal level increases.

Panel~(b) fixes a moderate low-SNR level ($\mathrm{SNR} = 0.16$) and varies the sample size. All three estimators exhibit the expected $n^{-1}$ decay of the MSE, but $\widehat{\theta}^{(L=3)}_{\mathrm{GMoM}}$ remains nearly indistinguishable from $\widehat{\theta}_{\mathrm{MLE}}$, while the identity-weighted estimator consistently performs worse. Thus, the loss from incorrect weighting is not a failure of the asymptotic rate, but rather a persistent deterioration in the constant, reflecting reduced statistical efficiency.

Panel~(c) compares the feasible estimators based on moments up to order two and three at the same signal level. The second-order construction remains substantially worse across the full range of sample sizes, and this gap does not vanish with increasing $n$. This is consistent with the structure of the model: for generic Gaussian mixture models with $K=3$, the minimal local informative order is typically $r_{\mathrm{loc}}=3$. Consequently, moments up to order two do not capture all locally identifiable directions, whereas moments up to order three do.

Overall, the finite-sample results are fully consistent with the theoretical picture. When the moment cutoff is large enough and the weighting matrix is chosen appropriately, feasible GMoM attains essentially the same performance as the MLE. By contrast, both incorrect weighting and an insufficient moment cutoff lead to a clear and persistent loss.

\begin{figure*}[t!]
    \centering
    \includegraphics[width=\linewidth]{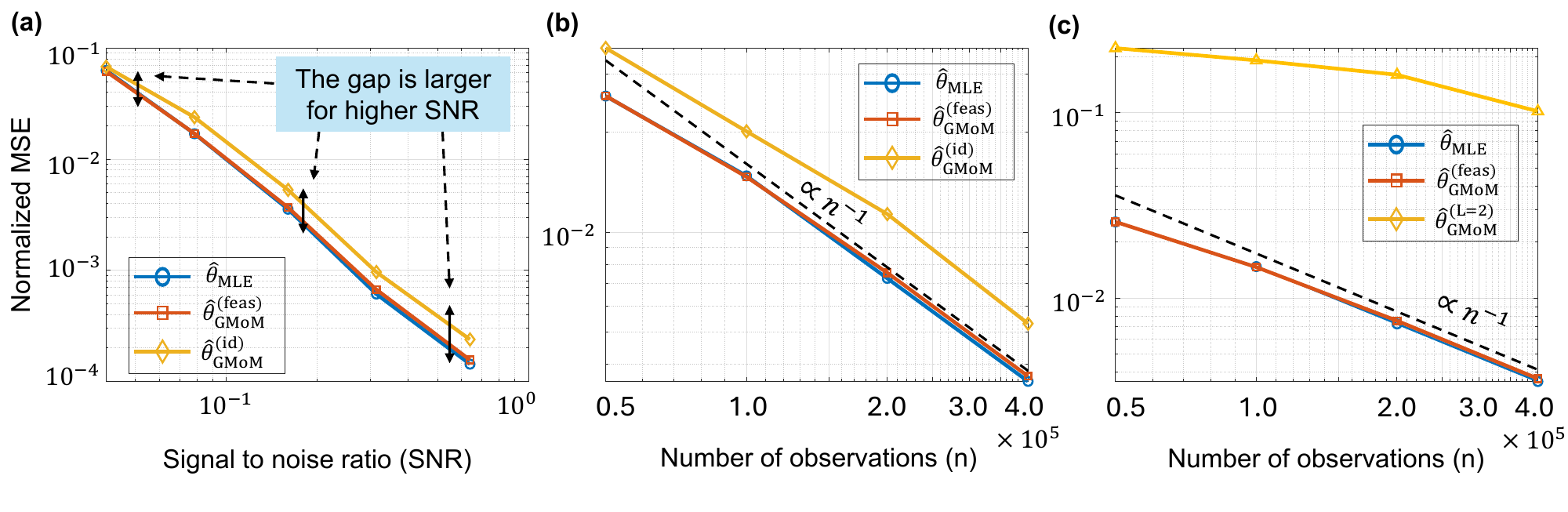}
    \caption{\textbf{Finite-sample comparison of MLE and GMoM in a low-SNR $K=3$, $d=4$ Gaussian mixture model.}
    Performance is measured by the permutation-aligned normalized mean-squared error~\eqref{eq:def_norm_MSE}, averaged over $N=200$ Monte Carlo trials.
    \textbf{(a)} MSE versus SNR at the largest tested sample size. The feasible third-order GMoM estimator closely tracks the MLE, whereas identity weighting yields a visible loss in accuracy.
    \textbf{(b)} MSE versus the number of observations $n$ at fixed $\mathrm{SNR}=0.16$. Both the MLE and the feasible third-order GMoM estimator display the expected $n^{-1}$ decay and remain nearly indistinguishable, while the identity-weighted estimator remains uniformly worse.
    \textbf{(c)} MSE versus $n$ at the same fixed signal level, comparing feasible GMoM with moment cutoff $L=2$ and $L=3$. The second-order estimator exhibits a substantial persistent gap, showing that in this example moments up to order three are needed to recover the local statistical information captured by the likelihood.}
    \label{fig:2}
\end{figure*}

\section{Discussion and outlook}
\label{sec:discussion}


The main statistical conclusion of this work is that, in low SNR, the usual first-order efficiency gap between likelihood-based and moment-based estimation disappears. While maximum likelihood remains the natural benchmark because its asymptotic covariance is determined by the inverse observed Fisher information, our result shows that, in the low-SNR regime, this benchmark can be attained to leading order by a GMoM estimator built from finitely many optimally weighted moments. Consequently, when the selected moments include all informative layers and the weighting matrix is chosen as the inverse covariance of the empirical feature vector, the resulting GMoM estimator has the same leading layerwise covariance as the MLE.
This perspective is especially relevant in applications such as cryo-EM, cryo-ET, and related latent-variable imaging problems with low-SNR data, where likelihood-based methods are often computationally demanding while moment-based methods can be substantially simpler.

We note that this conclusion is local in nature: it concerns the Fisher geometry and asymptotic covariance in a neighborhood of the true equivalence class, and does not imply that the global optimization landscapes of the likelihood and the GMoM objective coincide throughout the parameter space.
More broadly, the theory accentuates a structural distinction between two notions of informativeness. The \emph{global} informative moment order governs distinguishability and low-SNR sample complexity, whereas the \emph{local} informative differential order governs Fisher information and asymptotic efficiency. These notions are related but conceptually distinct: the former controls whether estimation is possible, while the latter controls the local covariance of an efficient estimator. 

\paragraph{Future work.}
Several directions remain open. First, extending the theory beyond the present Gaussian mean-scaling model to broader settings with non-Gaussian noise, model misspecification, or richer latent structure. 
A second direction is the joint asymptotic regime in which $\mathrm{SNR}_n\to 0$ simultaneously with $n\to\infty$. As clarified in Remark~\ref{rem:order_limits_lowSNR}, the present work takes these limits sequentially: the asymptotic normality theory is first developed for fixed $\beta$ as $n\to\infty$, and the resulting information operators are then analyzed in the low-SNR limit. Treating the coupled regime directly would provide a sharper bridge between low-SNR asymptotics, statistical efficiency, and sample-complexity scaling.

Another natural direction is to extend the framework beyond linear latent actions to nonlinear observation models of the form
\[
    Y=\beta f(Z,\theta^\star)+\sigma \xi,
\]
where the present setting corresponds to the special case $f(Z,\theta)=A(Z)\theta$. Under suitable regularity assumptions on $f$, one expects much of the present local theory to extend. In particular, the moment-based filtration, the layered low-SNR geometry, and the comparison between the Fisher information and the GMoM information operator should continue to be governed by the first nonvanishing differential moments of the model. 

\section*{Data Availability}
The detailed implementation and code are available at \href{https://github.com/AmnonBa/statistical-efficiency-GMoM}{https://github.com/AmnonBa/statistical-efficiency-GMoM}.

\section*{Acknowledgment}
T.B. and D.E. are supported in part by BSF under Grant 2020159. T.B. is also supported in part by NSF-BSF under Grant 2024791, and in part by ISF under Grant 1924/21.

\bibliographystyle{plain}

\begin{appendices}

{\centering{\section*{Appendix}}}

\paragraph{Appendix organization.}
Appendix~\ref{app:fundamental_properties} collects the basic analytical properties of the Gaussian latent-variable model used throughout the paper, including likelihood regularity, Fisher-information identities, and quotient-adapted MLE asymptotics. Appendix~\ref{sec:hermite_low_snr_expansions} develops the Hermite expansion tools underlying the low-SNR analysis. Appendix~\ref{app:gmom_efficiency} contains the GMoM-specific consistency, asymptotic normality, and efficiency arguments.

\section{Fundamental properties of the Gaussian latent-variable model}
\label{app:fundamental_properties}

This appendix collects foundational material for the Gaussian latent-variable model (Model~\ref{model:meanscaling}) used throughout the paper. 
We define $\phi(x)$ to be the standard Gaussian density on $\mathbb{R}^d$,
\begin{align}    
    \phi(x) \triangleq (2\pi)^{-d/2} \exp\!\left(-\frac{\|x\|_2^2}{2}\right).   \label{eq:def_standard_gaussian_density_phi}
\end{align}
and we define
\begin{align}
    \varphi_\sigma(u) \triangleq (2\pi\sigma^2)^{-d/2}\exp\!\Big(-\frac{\|u\|_2^2}{2\sigma^2}\Big),
\end{align}
for the centered Gaussian density with covariance $\sigma^2 I_d$.

\subsection{Regularity of the Fisher-information identities}
\label{subsec:Fisher_regular_meanscaling}

The next lemma collects the regularity properties of the marginal likelihood needed for the Fisher-information analysis below. These facts provide the analytic foundation for the low-SNR expansions derived in the following subsections.

\begin{lem}[Analytic regularity and Fisher-information identities]
\label{lem:likelihood_fisher_regular}
Consider Model~\ref{model:meanscaling} and fix $\sigma>0$, $\beta>0$, and $\theta^\star\in\Theta$. Assume Assumption~\ref{ass:integrability_max} holds. Let $U\subset\Theta$ be a neighborhood of $\theta^\star$. Then, the following hold.

\begin{enumerate}
    \item For every $y\in\mathbb{R}^d$, the map $\theta\mapsto p_{\theta,\beta}(y) = \int_{\mathcal{Z}}\varphi_\sigma\!\big(y-\beta A(z)\theta\big)\,\mu(dz)$ is $C^\infty$ on $U$.

    \item There exists a measurable function $G_U:\mathbb{R}^d\to[0,\infty)$ such that $ \mathbb{E}_{Y\sim p_{\theta^\star,\beta}}[G_U(Y)]<\infty$ and, for all $y\in\mathbb{R}^d$,
    \begin{align}
        \sup_{\theta\in U}|\log p_{\theta,\beta}(y)| + \sup_{\theta\in U}\|\nabla_\theta\log p_{\theta,\beta}(y)\| + \sup_{\theta\in U}\|\nabla_\theta^2\log p_{\theta,\beta}(y)\|_{\mathrm{op}} \le G_U(y).
    \end{align}

    \item The observed-data Fisher information matrix satisfies
    \begin{align}     \mathcal{I}_{\mathrm{obs}}(\theta;\beta)
        &=         \mathbb{E}_{Y\sim p_{\theta,\beta}}\!\big[\nabla_\theta \log p_{\theta,\beta}(Y)\,        \nabla_\theta \log p_{\theta,\beta}(Y)^\top \big]
        \\ & = - \mathbb{E}_{Y\sim p_{\theta,\beta}}\!\big[\nabla_\theta^2 \log p_{\theta,\beta}(Y)\big].
    \end{align}
\end{enumerate}
\end{lem}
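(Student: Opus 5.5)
The plan is to work directly with the explicit mixture form of the density, $p_{\theta,\beta}(y)=\int_{\mathcal{Z}}\varphi_\sigma(y-\beta A(z)\theta)\,\mu(dz)$, and to use the uniform operator bound $\|A(z)\|_{\mathrm{op}}\le a_{\max}$ together with compactness of $\Theta$ to get envelopes that are polynomial in $\|y\|$. Set $B\triangleq \beta a_{\max}\sup_{\theta\in\Theta}\|\theta\|<\infty$. Then $\|\beta A(z)\theta\|\le B$ for a.e. $z$ and every $\theta\in\Theta$. All bounds below are uniform over $\theta$ in all of $\Theta$, so any neighborhood $U$ will do.

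\emph{Smoothness (item 1).} For fixed $y$, every $\theta$-derivative of $\varphi_\sigma(y-\beta A(z)\theta)$ has the form $P(y,z,\theta)\,\varphi_\sigma(y-\beta A(z)\theta)$. Here $P$ is a polynomial in the entries of $y-\beta A(z)\theta$ and of $\beta A(z)$, and its coefficients are bounded uniformly in $z$. Hence every derivative of order $j$ is bounded by $C_j(1+\|y\|+B)^j\varphi_\sigma(0)$, uniformly in $(z,\theta)$. Since $\mu$ is a probability measure, dominated convergence lets us differentiate under the integral to every order, so $\theta\mapsto p_{\theta,\beta}(y)$ is $C^\infty$. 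The same argument shows that $p_{\theta,\beta}(y)$ is jointly smooth in $(y,\theta)$.

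\emph{Envelopes (item 2).} The key ingredient is two-sided Gaussian control of the density. Using $\|y-u\|^2\le(\|y\|+B)^2$ and $\|y-u\|^2\ge(\|y\|-B)_+^2$ for $\|u\|\le B$, we get
\begin{align}
(2\pi\sigma^2)^{-d/2}e^{-(\|y\|+B)^2/(2\sigma^2)}\le p_{\theta,\beta}(y)\le(2\pi\sigma^2)^{-d/2},
\end{align}
and therefore $|\log p_{\theta,\beta}(y)|\le c_0+(\|y\|+B)^2/(2\sigma^2)$.

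For the score, write
\begin{align}
\nabla_\theta\log p_{\theta,\beta}(y)=\sigma^{-2}\beta\,\mathbb{E}_{z\mid y,\theta}\bigl[A(z)^\top(y-\beta A(z)\theta)\bigr],
\end{align}
where $z\mid y,\theta$ denotes the posterior with weights proportional to $\varphi_\sigma(\cdot)\,\mu(dz)$. Because it is a posterior average, its norm is at most $\sigma^{-2}\beta a_{\max}(\|y\|+B)$.

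For the Hessian, write it as the posterior second moment of the per-component score minus the outer product of the posterior mean score, minus the posterior average of $\sigma^{-2}\beta^2A^\top A$. Each term is bounded by $C(1+\|y\|)^2$. So we may take
\begin{align}
G_U(y)=C(1+\|y\|)^2,
\end{align}
which is integrable because $Y=\beta A(Z)\theta^\star+\sigma\xi$ has all moments finite.

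\emph{Information identity (item 3).} Write $\nabla^2_\theta\log p=\nabla^2_\theta p/p-\nabla_\theta\log p\,\nabla_\theta\log p^\top$ and take expectations under $p_{\theta,\beta}$. It then suffices to show that
\begin{align}
\int\nabla^2_\theta p_{\theta,\beta}(y)\,dy=\nabla^2_\theta\int p_{\theta,\beta}(y)\,dy=0,
\end{align}
and similarly that $\int\nabla_\theta p_{\theta,\beta}\,dy=0$. These interchanges use the integrable dominating function from item 1 in the variable $y$. Uniformly over $\theta$ in a small ball, the derivatives are bounded by $C(1+\|y\|)^2\sup_{\|u\|\le B}\varphi_\sigma(y-u)$, and this function is integrable over $\mathbb{R}^d$ since it has Gaussian tails once $\|y\|>B$.

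The main obstacle is the lower bound on $p_{\theta,\beta}$, which controls $\log p$ and the ratios $\nabla p/p$. The posterior-average representation is what makes the ratios harmless, since it bounds them without ever dividing by a small density. Boundedness of $A$ is essential at exactly this step.
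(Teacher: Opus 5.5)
Your proposal is correct and follows essentially the same route as the paper. You bound $\theta$-derivatives of $\varphi_\sigma(y-\beta A(z)\theta)$ by a polynomial in $\|y\|$ times the Gaussian factor, use two-sided Gaussian bounds on $p_{\theta,\beta}$ to control $|\log p_{\theta,\beta}|$, take $G_U(y)=C(1+\|y\|)^2$, and differentiate $\int p_{\theta,\beta}\,dy=1$ twice under the integral; your posterior-average formulas for $\nabla_\theta p/p$ and $\nabla_\theta^2 p/p$ are just the paper's step of integrating the pointwise bounds over $z$ and dividing by $p$, while your explicit lower bound on $p_{\theta,\beta}$ and Lebesgue-dominating function $C(1+\|y\|)^2\sup_{\|u\|\le B}\varphi_\sigma(y-u)$ make precise two points the paper only asserts.
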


\begin{proof}[Proof of Lemma~\ref{lem:likelihood_fisher_regular}]
By Assumption~\ref{ass:integrability_max}, $\Theta$ is compact and $\|A(z)\|_{\mathrm{op}}\le a_{\max}$ for a.e. $z\in\mathcal{Z}$. Let $B_\Theta\triangleq \sup_{\theta\in\Theta}\|\theta\|<\infty$ and $R\triangleq \beta a_{\max}B_\Theta$. Then $\|\beta A(z)\theta\|\le R$ for all $\theta\in\Theta$ and a.e. $z\in\mathcal{Z}$, so $\beta A(z)\theta$ lie in the fixed compact ball $\mathcal{B}(0,R)\subset\mathbb{R}^d$.

For fixed $y\in\mathbb{R}^d$ and $z\in\mathcal{Z}$, define $g(\theta;y,z)\triangleq \varphi_\sigma\!\big(y-\beta A(z)\theta\big)$.
Since $g$ is a Gaussian density composed with an affine map, it is $C^\infty$ in $\theta$. Its first two derivatives are
\begin{align}
    \nabla_\theta g(\theta;y,z)
    &=
    \frac{\beta}{\sigma^2} A(z)^\top\!\big(y-\beta A(z)\theta\big)\,g(\theta;y,z),    \label{eq:grad_g_likelihood_regular}
    \\
    \nabla_\theta^2 g(\theta;y,z)
    &=    \left[\frac{\beta^2}{\sigma^4} A(z)^\top\!\big(y-\beta A(z)\theta\big) \big(y-\beta A(z)\theta\big)^\top A(z) - \frac{\beta^2}{\sigma^2}A(z)^\top A(z) \right]g(\theta;y,z).    \label{eq:hess_g_likelihood_regular}
\end{align}
Hence, uniformly in $\theta\in\Theta$,
\begin{align}
    \|\nabla_\theta g(\theta;y,z)\|
    &\le C_1(1+\|y\|)\,g(\theta;y,z),  \label{eq:grad_g_bound_likelihood_regular}
    \\
    \|\nabla_\theta^2 g(\theta;y,z)\|_{\mathrm{op}}
    &\le C_2(1+\|y\|^2)\,g(\theta;y,z),\label{eq:hess_g_bound_likelihood_regular}
\end{align}
for constants $C_1,C_2<\infty$.

Similarly, for higher orders, we define for a multi-index $\alpha=(\alpha_1,\ldots,\alpha_m)\in\mathbb{N}^m$, with $|\alpha|=\sum_{j=1}^m \alpha_j$, the partial derivative of order $|\alpha |$ by $\partial_\theta^\alpha = \frac{\partial^{|\alpha|}}{\partial\theta_1^{\alpha_1}\cdots\partial\theta_m^{\alpha_m}}$.
For each fixed $\alpha$, $\partial_\theta^\alpha g(\theta;y,z)$ is a polynomial of degree at most $|\alpha|$ in $A(z)$ and $y-\beta A(z)\theta$, multiplied by $g(\theta;y,z)$. Therefore, using the bounds on $A(z)$ and $\theta$, there exists $C_\alpha<\infty$ such that, uniformly over $\theta\in\Theta$ and a.e. $z\in\mathcal{Z}$,
\begin{align}
    \left|\partial_\theta^\alpha g(\theta;y,z)\right|
    \le C_\alpha(1+\|y\|)^{|\alpha|}g(\theta;y,z).
    \label{eq:multi_derivative_g_bound_likelihood_regular}
\end{align}
Since the means $\beta A(z)\theta$ are uniformly bounded in $\mathcal{B}(0,R)$, there exist constants $C_\alpha'<\infty$ and $c>0$ such that
\begin{align}
    \left|\partial_\theta^\alpha g(\theta;y,z)\right|
    \le C_\alpha'(1+\|y\|)^{|\alpha|}e^{-c\|y\|^2},
    \label{eq:multi_derivative_g_integrable_bound_likelihood_regular}
\end{align}
again uniformly over $\theta\in\Theta$ and a.e. $z\in\mathcal{Z}$. For each fixed $y$, this bound is independent of $z$ and hence is $\mu$-integrable; thus dominated convergence justifies differentiation under the $\mu$-integral at every order, proving item~(1).

We next establish the domination bounds for the log-likelihood derivatives. Since all means lie in $\mathcal{B}(0,R)$, the mixture density admits uniform Gaussian upper and lower tail bounds: there exists $C<\infty$ such that, for all $y\in\mathbb{R}^d$,
\begin{align}
    \sup_{\theta\in U}|\log p_{\theta,\beta}(y)|
    \le C(1+\|y\|^2).
    \label{eq:log_p_growth_bound_likelihood_regular}
\end{align}
Moreover, the first two derivatives of $g$ satisfy~\eqref{eq:grad_g_bound_likelihood_regular}--\eqref{eq:hess_g_bound_likelihood_regular}, uniformly over $\theta\in\Theta$ and a.e. $z\in\mathcal{Z}$.
Integrating these bounds over $z$ gives
\begin{align}
    \|\nabla_\theta p_{\theta,\beta}(y)\|
    &\le C(1+\|y\|)p_{\theta,\beta}(y),
    \label{eq:grad_p_bound_likelihood_regular}
    \\
    \|\nabla_\theta^2 p_{\theta,\beta}(y)\|_{\mathrm{op}}
    &\le C(1+\|y\|^2)p_{\theta,\beta}(y),
    \label{eq:hess_p_bound_likelihood_regular}
\end{align}
uniformly over $\theta\in U$. Hence
\begin{align}
    \sup_{\theta\in U}\|\nabla_\theta\log p_{\theta,\beta}(y)\|
    &\le C(1+\|y\|),
    \label{eq:score_growth_bound_likelihood_regular}
    \\
    \sup_{\theta\in U}\|\nabla_\theta^2\log p_{\theta,\beta}(y)\|_{\mathrm{op}}
    &\le C(1+\|y\|^2),
    \label{eq:hessian_growth_bound_likelihood_regular}
\end{align}
where the second bound follows from $\nabla_\theta^2\log p = \nabla_\theta^2p / p -  (\nabla_\theta p)(\nabla_\theta p)^\top/ p^2$.
Combining \eqref{eq:log_p_growth_bound_likelihood_regular}, \eqref{eq:score_growth_bound_likelihood_regular}, and \eqref{eq:hessian_growth_bound_likelihood_regular}, item~(2) follows with $G_U(y)=C(1+\|y\|^2)$. This function is integrable under $p_{\theta^\star,\beta}$, since $Y=\beta A(Z)\theta^\star+\sigma\xi$ has finite second moment.

It remains to prove the Fisher-information identity. The bounds above on $\nabla_\theta p_{\theta,\beta}(y)$ and $\nabla_\theta^2 p_{\theta,\beta}(y)$ provide integrable Lebesgue-dominating functions locally uniformly in $\theta$, so differentiation may be passed under the integral in $\int_{\mathbb{R}^d}p_{\theta,\beta}(y)\,dy=1$. Differentiating once gives $0 = \int_{\mathbb{R}^d}\nabla_\theta p_{\theta,\beta}(y)\,dy = \mathbb{E}_{Y\sim p_{\theta,\beta}} \big[\nabla_\theta\log p_{\theta,\beta}(Y)\big]$. Differentiating this identity once more yields
\begin{align}
    0 = \mathbb{E}_{Y\sim p_{\theta,\beta}} \big[\nabla_\theta^2\log p_{\theta,\beta}(Y)\big] + \mathbb{E}_{Y\sim p_{\theta,\beta}} \big[ \nabla_\theta\log p_{\theta,\beta}(Y) \nabla_\theta\log p_{\theta,\beta}(Y)^\top \big]. \label{eq:fisher_identity_likelihood_regular}
\end{align}
Rearranging gives
\begin{align}
    \mathcal{I}_{\mathrm{obs}}(\theta;\beta)
    &=
    \mathbb{E}_{Y\sim p_{\theta,\beta}}
    \big[
    \nabla_\theta\log p_{\theta,\beta}(Y)
    \nabla_\theta\log p_{\theta,\beta}(Y)^\top
    \big]
    \notag\\
    &=
    -
    \mathbb{E}_{Y\sim p_{\theta,\beta}}
    \big[
    \nabla_\theta^2\log p_{\theta,\beta}(Y)
    \big],
    \label{eq:observed_fisher_identity_likelihood_regular}
\end{align}
which proves item~(3).
\end{proof}

\subsection{Proof of Proposition~\ref{prop:mle_quotient_efficiency}}
\label{app:proof_mle_quotient_efficiency}

\paragraph{Step 1: Consistency on the quotient.}
For the consistency claim, we observe that
\begin{align}
    \mathcal{L}(\theta^\star;\theta^\star,\beta)-\mathcal{L}(\theta;\theta^\star,\beta) = \mathbb{E}_{Y\sim p_{\theta^\star,\beta}}\!\left[\log\frac{p_{\theta^\star,\beta}(Y)}{p_{\theta,\beta}(Y)} \right] = D_{\mathrm{KL}}\!\bigl(p_{\theta^\star,\beta}\,\|\,p_{\theta,\beta}\bigr) \ge 0 . \label{eq:mle_consistency_KL_gap}
\end{align}
where $D_{\mathrm{KL}}(p\|q)$ denotes the Kullback--Leibler divergence from $p$ to $q$. Equality holds if and only if $p_{\theta,\beta}=p_{\theta^\star,\beta}$ almost everywhere, that is, if and only if $\theta\sim\theta^\star$. Thus, by Assumption~\ref{ass:mle_identifiable_nondegenerate}(1), the population log-likelihood is uniquely maximized on the quotient at $[\theta^\star]$. 

By the uniform law of large numbers for parametric classes with compact parameter space, pointwise continuity in the parameter, and an integrable envelope, we have
\begin{align}
    \sup_{\theta\in\Theta} \bigl| \mathcal{L}_n(\theta;\mathcal{Y}) - \mathcal{L}(\theta;\theta^\star,\beta) \bigr| \xrightarrow[]{\mathbb{P}}0;
\end{align}
see, for example,~\cite[Sec.~2]{newey1994large}. Since the population criterion is maximized precisely on the equivalence class $[\theta^\star]$, the argmax theorem~\cite{van2000asymptotic} yields consistency in quotient loss:
\begin{align}
    d_{\mathrm{eq}}\!\bigl(\widehat\theta_{\mathrm{MLE}},\theta^\star\bigr) \xrightarrow[]{\mathbb{P}}0. \label{eq:mle_quotient_consistency_proof}
\end{align}

\paragraph{Step 2: Asymptotic normality and efficiency.}
For the asymptotic normality claim, Assumption~\ref{ass:equiv_class_submanifold} implies that, after shrinking neighborhoods if necessary, there exists a $C^2$ local parametrization of identifiable perturbations, $\Psi:U_W\subset W\to\Theta$, such that $\Psi(0)=\theta^\star$ and for every $w\in W$, $D\Psi(0)[w]=w$. 
We choose $\Psi$ as a local transversal parametrization, so that each nearby equivalence class has a unique representative in $\Psi(U_W)$. By the quotient consistency in~\eqref{eq:mle_quotient_consistency_proof}, we may therefore choose representatives $\widetilde{\theta}_{\mathrm{MLE}}\in[\widehat\theta_{\mathrm{MLE}}]$ such that $\widetilde{\theta}_{\mathrm{MLE}} = \Psi(\widehat{w}_n)$, with $\widehat{w}_n\in W$ and $\widehat{w}_n\xrightarrow[]{\mathbb{P}}0$.

Now consider the locally parametrized likelihood $\mathcal{L}_n(\Psi(w);\mathcal{Y})$, with $w\in W$. By construction, $\widehat w_n$ is, with probability tending to one, a local maximizer of this criterion in a neighborhood of $0$. The local model $w\mapsto p_{\Psi(w),\beta}$ is differentiable in quadratic mean at $w=0$. This follows from the standard differentiability-in-quadratic-mean theory for smooth dominated parametric models, together with the chain rule for differentiable-in-quadratic-mean families; see, for example,~\cite[Sec.~7]{le2000asymptotics} and~\cite[Chs.~7--8]{van2000asymptotic}. The required domination and smoothness conditions are verified in Lemma~\ref{lem:likelihood_fisher_regular}(1)--(2). Its Fisher information is
\begin{align}
    D\Psi(0)^\top \mathcal{I}_{\mathrm{obs}}(\theta^\star;\beta) D\Psi(0) = \Pi_W \mathcal{I}_{\mathrm{obs}}(\theta^\star;\beta) \Pi_W\big|_W = \mathcal{I}_{\mathrm{obs}}^{(W)}(\theta^\star;\beta).    \label{eq:local_fisher_information_equals_restricted}
\end{align}
Since $\mathcal{I}_{\mathrm{obs}}^{(W)}(\theta^\star;\beta)$ is positive-definite by Assumption~\ref{ass:mle_identifiable_nondegenerate}(2), the standard asymptotic normality theorem for MLEs in differentiable-in-quadratic-mean models applies to the locally parametrized model. Hence
\begin{align}
    \sqrt{n}\,\widehat{w}_n \xrightarrow[]{\mathcal{D}} \mathcal{N}\!\left(0,\, \mathcal{I}_{\mathrm{obs}}^{(W)}(\theta^\star;\beta)^{-1} \right). \label{eq:mle_local_w_asymptotic_normality}
\end{align}

Finally, since $\Psi$ is $C^2$ and satisfies $D\Psi(0)[w]=w$, then as $w \to 0$,
\begin{align}
    \Pi_W\bigl(\Psi(w)-\theta^\star\bigr) = w+O(\|w\|^2). \label{eq:local_param_projection_expansion}
\end{align}
Applying~\eqref{eq:local_param_projection_expansion} with $w=\widehat{w}_n$, and using $\widehat{w}_n=O_{\mathbb{P}}(n^{-1/2})$ from~\eqref{eq:mle_local_w_asymptotic_normality}, gives
\begin{align}
    \Pi_W\bigl(\widetilde{\theta}_{\mathrm{MLE}}-\theta^\star\bigr) = \widehat{w}_n+o_{\mathbb{P}}(n^{-1/2}). \label{eq:mle_projection_equals_local_coordinate}
\end{align}
Combining~\eqref{eq:mle_local_w_asymptotic_normality} and~\eqref{eq:mle_projection_equals_local_coordinate}, we obtain
\begin{align}
    \sqrt{n}\, \Pi_W\!\bigl(\widetilde{\theta}_{\mathrm{MLE}}-\theta^\star\bigr) \xrightarrow[]{\mathcal{D}} \mathcal{N}\!\left( 0,\, \mathcal{I}_{\mathrm{obs}}^{(W)}(\theta^\star;\beta)^{-1} \right). \label{eq:mle_projected_asymptotic_normality}
\end{align}
This proves asymptotic normality and efficiency on the quotient.

\subsection{Proof of Proposition~\ref{prop:generic_rloc_rglob}}
\label{sec:proof_generic_rloc_rglob}

To prove the proposition we must show that if $T_{\leq k}$ is injective on $(\Theta \setminus \Theta_{\mathrm{exc}})/\sim$ then after replacing $\Theta_{\mathrm{exc}}$ by a possibly bigger set which still has total measure zero we have that $\rank(DT_{\leq k}(\theta^*)) = \dim W$ for 
all $\theta^* \in \Theta \setminus \Theta^*$. The map 
\begin{align}        
    T_{\leq k } \colon \Theta \to \mathbb{R}^D = \mathbb{R}^d \times (\mathbb{R}^{d})^{\otimes 2} \ldots (\mathbb{R}^d)^{\otimes k}
\end{align}
is a polynomial map and the image of $\Theta \setminus \Theta_{\mathrm{exc}}$ in $\mathbb{R}^D$ has dimension equal to $\dim W$.
Let $Z$ be the Zariski closure of $T_{\leq k}(\Theta)$ in $\mathbb{R}^D$. By definition this is the smallest subvariety of
$\mathbb{R}^D$ containing $T_{\leq k}(\Theta)$ and it has dimension equal to $\dim W$. Then the map $T_{\leq k} \colon \mathbb{R}^m \to Z$ is dominant meaning it has dense image. The theorem of generic smoothness~\cite[Corollary 10.7]{hartshorne1977algebraic}  implies that there is a Zariski open set in $U \subset \mathbb{R}^m$ where this map is smooth. This means on the complement of $U$ (which necessarily has measure zero because the complement of all Zariski open sets has measure zero) $DT_{\leq k}(\theta)$ has rank equal to $\dim Z = W$.
Intersecting the $U$ with $\Theta \setminus \Theta_{\mathrm{exc}}$ produces a subset of $\Theta$ whose complement has full rank and $r_{\mathrm{loc}}(\theta^*) = k$ for all $\theta^*$ in this set.

\section{Hermite expansions and low-SNR likelihood asymptotics}
\label{sec:hermite_low_snr_expansions}

In this section we develop the analytic expansions around the Gaussian noise baseline that underlie the low-SNR theory. We first recall the Hermite-tensor identities, then derive the Hermite-moment expansion of the likelihood ratio and its uniform low-SNR consequences. These results yield the leading score and local log-likelihood expansions.

\subsection{Hermite tensors for Gaussian expansions}
\label{subsec:hermite_prelims}

We collect standard definitions and identities for multivariate Hermite polynomials. These tools will be used repeatedly to expand likelihood ratios and score functions around a Gaussian reference measure. The presentation below follows standard treatments of multivariate Hermite polynomials and Wiener-It\^o chaos; see, e.g., \cite{nualart2006malliavin,peccati2011wiener,geng2020wiener,nualart2019malliavin}.

\begin{definition}[$k$-fold tensor derivative]
\label{def:kfold_tensor_derivative}
Let $f:\mathbb{R}^d\to\R$ be $k$ times differentiable. The $k$-fold tensor derivative of $f$ at $x\in\mathbb{R}^d$, denoted $\nabla^{\otimes k} f(x)$, is the order-$k$ tensor in $(\mathbb{R}^d)^{\otimes k}$ whose entries are
\begin{align}
    \label{eq:kfold_tensor_derivative_entries}
    \big(\nabla^{\otimes k} f(x)\big)_{i_1,\ldots,i_k} \;=\; \frac{\partial^k f(x)}{\partial x_{i_1}\cdots \partial x_{i_k}},
    \qquad i_1,\ldots,i_k\in\{1,\ldots,d\}.
\end{align}
If $f\in C^k$, then $\nabla^{\otimes k} f(x)$ is symmetric in its indices (i.e., mixed partial derivatives commute).
\end{definition}

\begin{definition}[Hermite tensors]\label{def:hermite-tensors}
Define the \emph{Hermite tensor} $H_k:\mathbb{R}^d\to(\mathbb{R}^d)^{\otimes k}$ by the Rodrigues formula~\cite{geng2020wiener}
\begin{align}
    \label{eq:Hermite_tensor_def_prelim}
    H_k(x)\;\triangleq\;(-1)^k\,e^{\|x\|^2/2}\,\nabla_x^{\otimes k}\,e^{-\|x\|^2/2}.
\end{align}
Here $(\mathbb{R}^d)^{\otimes k}$ is equipped with the natural tensor inner product $\langle\cdot,\cdot\rangle$ and Frobenius norm $\|\cdot\|_F$.
    
\end{definition}

The next result is a basic identity in the theory of Hermite polynomials and Wiener-Ito chaos; it is the exponential generating function of the Hermite tensors~\cite{geng2020wiener}.

\begin{lem}[Exponential generating function]
\label{lem:Wick_identity_prelim}
For every $u\in\mathbb{R}^d$, $x\in\mathbb{R}^d$, and $t\in\R$,
\begin{align}
    \label{eq:Wick_expansion_prelim}
    \exp\!\left( t\langle x,u\rangle-\frac{t^2}{2}\|u\|^2 \right) = \sum_{k=0}^\infty \frac{t^k}{k!}\,\big\langle H_k(x),\,u^{\otimes k}\big\rangle,
\end{align}
where the series converges absolutely for every fixed $(x,u,t)$.
\end{lem}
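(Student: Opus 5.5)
The plan is to derive the identity from the moment generating function of the standard Gaussian, with the Rodrigues formula \eqref{eq:Hermite_tensor_def_prelim} supplying the link to $H_k$. Fix $x,u\in\mathbb{R}^d$. First I will write the left-hand side of \eqref{eq:Wick_expansion_prelim} as a ratio of Gaussian factors:
\begin{align}
    \exp\!\Big(t\langle x,u\rangle-\frac{t^2}{2}\|u\|^2\Big) = e^{\|x\|^2/2}\,e^{-\|x-tu\|^2/2}.
\end{align}
This holds because $\|x-tu\|^2=\|x\|^2-2t\langle x,u\rangle+t^2\|u\|^2$. Next I set $f(x)\triangleq e^{-\|x\|^2/2}$. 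This function is entire on $\mathbb{C}^d$, so for fixed $x$ the map $s\mapsto f(x-su)$ is an entire function of $s\in\mathbb{C}$, and its Taylor series at $s=0$ converges absolutely for every $s$.

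The Taylor coefficients come from the multivariate chain rule, iterated $k$ times along the fixed direction $-u$:
\begin{align}
    \frac{d^k}{ds^k}f(x-su)\Big|_{s=0}=(-1)^k\big\langle \nabla^{\otimes k}f(x),u^{\otimes k}\big\rangle .
\end{align}
Here I use that $\nabla^{\otimes k}f$ is symmetric, so contracting it against $u^{\otimes k}$ gives the $k$-th directional derivative. Multiplying by $e^{\|x\|^2/2}$ and comparing with Definition~\ref{def:hermite-tensors} then gives
\begin{align}
    e^{\|x\|^2/2}\,\frac{d^k}{ds^k}f(x-su)\Big|_{s=0}=\big\langle H_k(x),u^{\otimes k}\big\rangle .
\end{align}
Summing the Taylor series at $s=t$ produces \eqref{eq:Wick_expansion_prelim}.

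Absolute convergence follows from entireness, since a power series of an entire function converges absolutely everywhere. Alternatively, I can expand the left-hand side directly as the product $e^{t\langle x,u\rangle}e^{-t^2\|u\|^2/2}$ of two absolutely convergent exponential series. The Cauchy product of these is absolutely convergent, and regrouping it by powers of $t$ yields the same coefficients by uniqueness of power-series coefficients. The only point needing care is the chain-rule identification of the $k$-th derivative with the tensor contraction, including the sign $(-1)^k$. This is routine, and I expect no real obstacle.
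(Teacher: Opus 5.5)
Your proof is correct, and it takes a different route from the paper only in the sense that the paper gives no proof at all. The paper states the identity as the standard exponential generating function of the Hermite tensors and cites \cite{geng2020wiener}, which keeps the exposition short but leaves the reader to trust that the cited normalization matches \eqref{eq:Hermite_tensor_def_prelim}.

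You write the left-hand side as $e^{\|x\|^2/2}f(x-tu)$, Taylor-expand in the shift, and match the $k$-th directional derivative with the Rodrigues formula. This is the classical self-contained argument. It also checks that the sign $(-1)^k$ yields the probabilists' normalization used downstream: $H_1(x)=x$, $H_2(x)=x^{\otimes 2}-I_d$, the factor $k!$ in Lemma~\ref{lem:Hermite_orthogonality_prelim}, and \eqref{eq:Hermite_signal_identity}.

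Two small remarks.

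First, you need not pass to $\mathbb{C}^d$. There you would have to use $\sum_i z_i^2$ rather than $\|z\|^2$ to get a holomorphic function. For fixed $x,u$, the map $s\mapsto f(x-su)=\exp\bigl(-\tfrac12\|x\|^2+s\langle x,u\rangle-\tfrac{s^2}{2}\|u\|^2\bigr)$ is the exponential of a polynomial in $s$, hence entire.

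Second, keep your Cauchy-product route as part of the proof rather than as an aside. It gives the explicit coefficient formula $\frac{1}{k!}\langle H_k(x),u^{\otimes k}\rangle=\sum_{a+2b=k}\frac{\langle x,u\rangle^a(-\|u\|^2/2)^b}{a!\,b!}$, and hence the majorant
\begin{align}
\sum_{k\ge 0}\frac{|t|^k}{k!}\bigl|\langle H_k(x),u^{\otimes k}\rangle\bigr|\le \exp\Bigl(|t|\,\|x\|\,\|u\|+\tfrac{t^2}{2}\|u\|^2\Bigr).
\end{align}
This bound is uniform over bounded $u$. It is exactly the domination needed, via Fubini--Tonelli with $\|A(Z)\theta\|$ bounded, to justify the term-by-term expectation over $Z$ in Lemma~\ref{lem:hermite_moment_lr_expansion}. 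The paper asserts that interchange without an explicit bound. Pointwise absolute convergence of an entire function's Taylor series does not by itself give this uniformity.
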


\begin{remark}[Convention for symmetric tensor representatives]
\label{rem:symmetric_tensor_representatives}
Throughout, order-$k$ tensors are understood as symmetric tensors whenever they are paired with Hermite tensors or moment tensors, i.e., all  inner products involving $H_k(x)$, $T_k(\theta)$, or $DT_k(\theta^\star)[h]$ depend only on the symmetric part of the corresponding tensor. Thus, we identify tensors that have the same symmetrization and write $(\mathbb{R}^d)^{\otimes k}$ for the resulting symmetric tensor space, to avoid introducing additional notation.
\end{remark}

To use Hermite tensors for Gaussian $L^2$ expansions, it is useful to group the scalar polynomial functions obtained by contracting $H_k(x)$ with order-$k$ tensors according to their Hermite order.

\begin{definition}[Hermite chaoses]
\label{def:Hermite_chaoses}
Recall that $\phi(x)=(2\pi)^{-d/2}\exp(-\|x\|^2/2)$ is the standard Gaussian density on $\mathbb{R}^d$~\eqref{eq:def_standard_gaussian_density_phi}, and let $L^2(\phi) \triangleq  L^2(\mathbb{R}^d,\phi(x)\,dx)$.
For each $k\ge 0$, the \emph{$k$-th Hermite chaos} is the finite-dimensional subspace
$\mathcal{H}_k\subset L^2(\phi)$ defined by
\begin{align}
    \mathcal{H}_k \triangleq \left\{ x\mapsto \big\langle H_k(x),a\big\rangle \;:\; a\in(\mathbb{R}^d)^{\otimes k} \right\}.
\end{align}
For $k=0$, we use the convention that $\mathcal{H}_0$ is the space of constant functions.
\end{definition}

The elements of $\mathcal{H}_k$ are scalar-valued polynomial functions on $\mathbb{R}^d$. For example, the zeroth Hermite chaos is
\begin{align}
    \mathcal{H}_0 = \{x\mapsto c:\ c\in\mathbb{R}\},
\end{align}
namely, the space of all constant functions.
Since $H_1(x)=x$, the first Hermite chaos is
\begin{align}
    \mathcal{H}_1  = \{x\mapsto a^\top x:\ a\in\mathbb{R}^d\},
\end{align}
namely, the space of all linear functions.
Moreover, since $H_2(x)=x^{\otimes 2}-I_d$, the second Hermite chaos is
\begin{align}
    \mathcal{H}_2
    =
    \{x\mapsto x^\top A x-\operatorname{tr}(A):\ A=A^\top\},
\end{align}
namely, the space of centered quadratic functions.

The following example illustrates that the Hermite-chaos decomposition separates a
function into orthogonal components of different chaos orders. 

\begin{example}[A simple Hermite-chaos decomposition]
\label{ex:simple_Hermite_decomposition}
The polynomial $x_1^2$ has degree two, but it is not purely second-order in
the Hermite-chaos decomposition. Indeed,
\begin{align}
    x_1^2 = (x_1^2-1)+1,
\end{align}
where $x_1^2-1 \in \mathcal{H}_2$, and $1\in\mathcal{H}_0$.
Thus, $x_1^2$ has a second-order Hermite component and a zeroth-order Hermite
component.
\end{example}

\begin{lem}[Orthogonality of Hermite chaoses]
\label{lem:Hermite_orthogonality_prelim}
Let $X\sim\mathcal{N}(0,I_d)$. For any $k,\ell\ge 0$, $a\in(\mathbb{R}^d)^{\otimes k}$, and $b\in(\mathbb{R}^d)^{\otimes \ell}$,
\begin{align}
    \label{eq:Hermite_orthogonality_prelim}    \mathbb{E}\Big[\big\langle H_k(X),a\big\rangle\,\big\langle H_\ell(X),b\big\rangle\Big] = \delta_{k,\ell}\,k!\,\langle a,b\rangle.
\end{align}
\end{lem}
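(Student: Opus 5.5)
Use the exponential generating function of Lemma~\ref{lem:Wick_identity_prelim} as the main tool. Fix $u,v\in\mathbb{R}^d$ and $s,t\in\mathbb{R}$, and compute $\mathbb{E}\bigl[\exp(s\langle X,u\rangle-\tfrac{s^2}{2}\|u\|^2)\exp(t\langle X,v\rangle-\tfrac{t^2}{2}\|v\|^2)\bigr]$ in closed form with the Gaussian moment generating function $\mathbb{E}[e^{\langle X,w\rangle}]=e^{\|w\|^2/2}$, applied to $w=su+tv$. The quadratic terms cancel and leave
\[
\exp\bigl(st\langle u,v\rangle\bigr)=\sum_{k\ge 0}\frac{(st)^k}{k!}\langle u,v\rangle^k=\sum_{k\ge 0}\frac{(st)^k}{k!}\,\langle u^{\otimes k},v^{\otimes k}\rangle .
\]

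On the other side, insert the series from Lemma~\ref{lem:Wick_identity_prelim} for each exponential and expand the product. Interchanging expectation with the double sum gives
\[
\sum_{k,\ell\ge 0}\frac{s^k t^\ell}{k!\,\ell!}\,\mathbb{E}\bigl[\langle H_k(X),u^{\otimes k}\rangle\langle H_\ell(X),v^{\otimes \ell}\rangle\bigr].
\]
Both sides are power series in $(s,t)$ that converge on all of $\mathbb{R}^2$, so I can compare coefficients of $s^k t^\ell$. Terms with $k\neq\ell$ vanish, and for $k=\ell$ I get $\mathbb{E}[\langle H_k(X),u^{\otimes k}\rangle\langle H_k(X),v^{\otimes k}\rangle]=k!\,\langle u^{\otimes k},v^{\otimes k}\rangle$. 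This is the claim for rank-one tensors $a=u^{\otimes k}$ and $b=v^{\otimes \ell}$.

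To get general $a,b$, use bilinearity of both sides of \eqref{eq:Hermite_orthogonality_prelim} in $(a,b)$. Under the symmetric-tensor convention of Remark~\ref{rem:symmetric_tensor_representatives}, both sides depend only on the symmetric parts of $a$ and $b$: $H_k(x)$ is symmetric by Definition~\ref{def:kfold_tensor_derivative}, and for $k=\ell$ the pairing $\langle a,b\rangle$ is taken between symmetrized tensors. Symmetric tensors are spanned by pure powers $u^{\otimes k}$ (polarization), so linearity extends the identity to all $a,b$. The case $k=0$ or $\ell=0$ is trivial: $H_0=1$ and $\mathbb{E}\langle H_k(X),a\rangle=0$ for $k\ge1$, which is the $t=0$ coefficient identity.

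The only real obstacle is justifying the interchange of expectation and double series. I will handle it by dominated convergence, bounding the partial sums by $\sum_k \frac{|s|^k}{k!}|\langle H_k(X),u^{\otimes k}\rangle|$ times the analogous sum in $t$. An alternative is to treat both sides as entire functions of $(s,t)$ and differentiate under the expectation at $s=t=0$. Each derivative is a polynomial in $X$ times a Gaussian-type exponential, which is integrable, and $\partial_s^k\partial_t^\ell$ at $0$ extracts exactly $\mathbb{E}[\langle H_k,u^{\otimes k}\rangle\langle H_\ell,v^{\otimes\ell}\rangle]$. I would use this second route because it avoids bounding Hermite growth.
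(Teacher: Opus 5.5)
The paper does not prove this lemma; it quotes it as a standard fact from the Wiener--It\^o chaos literature. Your generating-function argument is therefore not matched against anything in the paper, and it is a correct, self-contained proof.

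Your second justification route is the right one. For $|s|,|t|\le 1$, every $(s,t)$-derivative of $\exp\bigl(s\langle x,u\rangle+t\langle x,v\rangle-\tfrac{s^2}{2}\|u\|^2-\tfrac{t^2}{2}\|v\|^2\bigr)$ is bounded by $C(1+\|x\|)^N e^{(\|u\|+\|v\|)\|x\|}$, which is $\phi$-integrable. Also, $\partial_s^k$ at $s=0$ returns $\langle H_k(x),u^{\otimes k}\rangle$ by Lemma~\ref{lem:Wick_identity_prelim}.

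Your appeal to Remark~\ref{rem:symmetric_tensor_representatives} is essential, not cosmetic. With the literal Frobenius pairing, the identity fails for nonsymmetric tensors: for $d=2$, $a=e_1\otimes e_2$, $b=e_2\otimes e_1$, one has $\langle a,b\rangle=0$ but $\mathbb{E}[\langle H_2(X),a\rangle\langle H_2(X),b\rangle]=\mathbb{E}[X_1^2X_2^2]=1$. What your polarization step actually establishes is $\delta_{k\ell}\,k!\,\langle\operatorname{Sym}a,\operatorname{Sym}b\rangle$, which is the correct reading of the statement.

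The proof usually given in the cited references uses Gaussian integration by parts through the Rodrigues formula instead: $\mathbb{E}[\langle H_k(X),a\rangle f(X)]=\mathbb{E}[\langle\nabla^{\otimes k}f(X),a\rangle]$ for polynomial $f$. Taking $f=\langle H_\ell,b\rangle$, whose top-degree part is $\langle x^{\otimes\ell},b\rangle$, gives $0$ for $k>\ell$ (and for $k<\ell$ by symmetry) and $k!\langle a,\operatorname{Sym}b\rangle$ for $k=\ell$. That route handles arbitrary tensors directly, so the symmetrization appears on its own, but it needs the leading term of $H_\ell$. Yours needs only the Gaussian moment generating function and the already-stated generating function, at the cost of the polarization step.
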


The next standard result states that the Hermite chaoses defined above form a complete orthogonal decomposition of the Gaussian Hilbert space $L^2(\phi)$; see, e.g., \cite{janson1997gaussian, nualart2006malliavin}.

\begin{lem}[Gaussian Hermite chaos decomposition]
\label{lem:L2_Hermite_expansion_prelim}
The Hilbert space $L^2(\phi)$ admits the orthogonal decomposition
\begin{align}
    L^2(\phi) = \bigoplus_{k=0}^{\infty} \mathcal{H}_k .
\end{align}
In particular, every $R\in L^2(\phi)$ admits a unique expansion
\begin{align}
    R(x) = \sum_{k=0}^{\infty} \frac{1}{k!} \big\langle H_k(x),\alpha_k\big\rangle,
    \qquad \alpha_k\in(\mathbb{R}^d)^{\otimes k},
\end{align}
with convergence in $L^2(\phi)$. The coefficients are given by
\begin{align}
    \alpha_k = \mathbb{E}_\phi\!\left[R(X)H_k(X)\right],
    \qquad X\sim \mathcal{N}(0,I_d),
\end{align}
and Parseval's identity takes the form $\|R\|_{L^2(\phi)}^2 = \sum_{k=0}^{\infty} \frac{1}{k!}\|\alpha_k\|_F^2$.
\end{lem}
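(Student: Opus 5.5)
The statement to prove is Lemma~\ref{lem:L2_Hermite_expansion_prelim}, the Hermite chaos decomposition of $L^2(\phi)$. The plan has two parts. First I will show that the spaces $\mathcal{H}_k$ are mutually orthogonal and that their algebraic sum is dense in $L^2(\phi)$. Then I will read off the coefficients and Parseval's identity from Lemma~\ref{lem:Hermite_orthogonality_prelim}.

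Orthogonality is exactly Lemma~\ref{lem:Hermite_orthogonality_prelim}. Taking $k\neq\ell$ in \eqref{eq:Hermite_orthogonality_prelim} gives $\mathcal{H}_k\perp\mathcal{H}_\ell$. Each $\mathcal{H}_k$ is finite-dimensional, hence closed, so the orthogonal direct sum $\bigoplus_k\mathcal{H}_k$ is a well-defined closed subspace of $L^2(\phi)$.

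Next I show that every polynomial lies in the algebraic span of the chaoses. From the Rodrigues formula \eqref{eq:Hermite_tensor_def_prelim}, each entry of $H_k(x)$ is $x_{i_1}\cdots x_{i_k}$ plus a polynomial of degree at most $k-2$; this follows by induction on $k$, since $H_{k+1}=x\otimes H_k-\nabla H_k$. Triangular inversion then expresses every monomial of degree $k$ as a combination of entries of $H_j$ with $j\le k$. Hence $\bigoplus_{j\le k}\mathcal{H}_j$ equals the space of polynomials of degree at most $k$.

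The main step is density of polynomials in $L^2(\phi)$. Suppose $R\in L^2(\phi)$ is orthogonal to all polynomials. Consider $F(u)=\mathbb{E}_\phi[R(X)e^{\langle u,X\rangle}]$. This is finite for all complex $u$ by Cauchy--Schwarz, because $e^{\langle u, X\rangle}\in L^2(\phi)$. It is entire, and its Taylor coefficients at $0$ are moments $\mathbb{E}_\phi[R(X)X^{\alpha}]$, which all vanish. So $F\equiv0$. In particular the Fourier transform of the finite signed measure $R\phi\,dx$ vanishes, which forces $R=0$ almost everywhere. An alternative route avoiding the Fourier argument is to apply Lemma~\ref{lem:Wick_identity_prelim}: the exponentials $e^{\langle u,x\rangle}$ are $L^2(\phi)$-limits of the partial sums of \eqref{eq:Wick_expansion_prelim}. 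This convergence can be checked via Lemma~\ref{lem:Hermite_orthogonality_prelim}, since the norms of the tails are $\sum_{k>K}\|u\|^{2k}/k!\to0$. Either way, $R$ is orthogonal to all exponentials, and one concludes as above. I expect this density step to be the only nonroutine point.

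For the expansion itself, write $\Pi_k$ for the orthogonal projection onto $\mathcal{H}_k$. Completeness gives $R=\sum_k\Pi_kR$ in $L^2(\phi)$. To identify the coefficients, write $\Pi_kR=\frac1{k!}\langle H_k,\alpha_k\rangle$ with $\alpha_k$ symmetric, following Remark~\ref{rem:symmetric_tensor_representatives}. For any symmetric $b$, pairing with $\langle H_k,b\rangle$ and applying \eqref{eq:Hermite_orthogonality_prelim} gives
\begin{align}
\mathbb{E}_\phi\bigl[R\,\langle H_k,b\rangle\bigr]=\langle\alpha_k,b\rangle .
\end{align}
Hence $\alpha_k=\mathbb{E}_\phi[R(X)H_k(X)]$ as symmetric tensors, and the expansion is unique. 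Finally, Lemma~\ref{lem:Hermite_orthogonality_prelim} gives $\|\Pi_kR\|^2=\frac{1}{(k!)^2}\,k!\,\|\alpha_k\|_F^2=\frac{1}{k!}\|\alpha_k\|_F^2$, and summing over $k$ yields Parseval's identity.
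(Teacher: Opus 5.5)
Your proof is correct, and it is more than the paper provides: the paper gives no proof of this lemma and simply quotes it as a standard fact, citing Janson and Nualart. What you wrote is essentially the textbook argument from those sources, made self-contained. It runs as follows: orthogonality from Lemma~\ref{lem:Hermite_orthogonality_prelim}; the recursion $H_{k+1}=x\otimes H_k-\nabla\otimes H_k$ to identify $\bigoplus_{j\le k}\mathcal{H}_j$ with the polynomials of degree at most $k$ (this also supplies the missing justification for Lemma~\ref{lem:polynomial_degree_chaos}); density of polynomials via uniqueness of the Fourier transform of the finite signed measure $R\phi\,dx$; and finally coefficients and Parseval by orthogonal projection.

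Your version also makes explicit something the paper leaves buried in Remark~\ref{rem:symmetric_tensor_representatives}. The pairing identity \eqref{eq:Hermite_orthogonality_prelim} holds only for symmetric tensors: for instance $a=e_1\otimes e_2$, $b=e_2\otimes e_1$ give $\mathbb{E}[X_1^2X_2^2]=1\neq 2!\langle a,b\rangle=0$. Hence the uniqueness of $\alpha_k$ and the constant $1/k!$ in Parseval also depend on symmetry, and you correctly restrict both $\alpha_k$ and the test tensor $b$ to be symmetric.

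Two small points should be tightened if you write this out in full.
\begin{enumerate}
\item The claim that $F$ is entire needs a domination argument to justify expanding under the expectation. The bound $\sum_n|\langle u,X\rangle|^n/n!\le e^{\|u\|\|X\|}\in L^2(\phi)$, combined with Cauchy--Schwarz against $R$, suffices.
\item In your alternative route, Lemma~\ref{lem:Wick_identity_prelim} as stated covers only real $u$. You therefore get vanishing of the Laplace transform on $\mathbb{R}^d$, and you must pass to imaginary arguments by analytic continuation before invoking Fourier uniqueness; uniqueness of the Laplace transform on an open set would also do. You should also identify the $L^2$ limit of the Wick partial sums with their pointwise limit, using an a.e.\ convergent subsequence.
\end{enumerate}
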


The following elementary lemma records a simple bookkeeping fact that will be used below: products of finite-order Hermite components cannot generate chaos orders larger than their total ordinary polynomial degree.

\begin{lem}[Polynomial degree and Hermite chaoses]
\label{lem:polynomial_degree_chaos}
Let $p:\mathbb{R}^d\to\mathbb{R}$ be a polynomial of degree at most $m$.
Then $p \in \bigoplus_{q=0}^{m}\mathcal{H}_q$.
Consequently, if $f_j\in\mathcal{H}_j$ for $j=1,\ldots,m$, then every finite product $f_{j_1}\cdots f_{j_r}$ with $j_1+\cdots+j_r\le m$ satisfies
\begin{align}
    f_{j_1}\cdots f_{j_r} \in \bigoplus_{q=0}^{m}\mathcal{H}_q .
\end{align}
\end{lem}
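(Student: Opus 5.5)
The plan is to prove Lemma~\ref{lem:polynomial_degree_chaos} in two steps: first the general statement that a polynomial of degree at most $m$ lies in $\bigoplus_{q=0}^{m}\mathcal{H}_q$, then the consequence about products.

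\textbf{Step 1: invert the Hermite basis.} The tensor $H_k(x)$ is itself a polynomial of degree $k$ whose top-degree part is $x^{\otimes k}$. This follows by induction from the Rodrigues formula~\eqref{eq:Hermite_tensor_def_prelim}, using the recursion $H_{k+1}(x)=x\otimes H_k(x)-\nabla\otimes H_k(x)$ (suitably symmetrized). Differentiating reduces the degree, so
\begin{align}
    H_k(x)=x^{\otimes k}+(\text{tensor polynomial of degree}\le k-2).
\end{align}
Now let $p$ have degree at most $m$, and write its homogeneous degree-$j$ part as $\langle x^{\otimes j},a_j\rangle$ with $a_j$ symmetric. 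Induct downward on the top degree. The difference $p-\langle H_m(x),a_m\rangle$ has degree at most $m-1$, and $\langle H_m,a_m\rangle\in\mathcal{H}_m$. Repeating the step expresses $p$ as a finite sum of elements of $\mathcal{H}_q$ with $q\le m$, the base case being the constants in $\mathcal{H}_0$. An equivalent route uses Lemma~\ref{lem:L2_Hermite_expansion_prelim}: the coefficient $\alpha_k=\mathbb{E}[p(X)H_k(X)]$ vanishes for $k>m$. To see this, write $H_k\phi=(-1)^k\nabla^{\otimes k}\phi$ and integrate by parts $k$ times; this moves $k$ derivatives onto $p$, which kills it. Boundary terms are absent because of the Gaussian decay.

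\textbf{Step 2: products.} Each $f_j\in\mathcal{H}_j$ is a polynomial of degree at most $j$, since $H_j$ is. Hence the product $f_{j_1}\cdots f_{j_r}$ is a polynomial of degree at most $j_1+\cdots+j_r\le m$, and Step 1 gives the claim.

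\textbf{Main obstacle.} The only real point is the degree and leading-term structure of $H_k$. I would check it directly from the Rodrigues formula: each derivative of $e^{-\|x\|^2/2}$ either brings down a factor $-x_i$, raising the degree by one, or differentiates an existing polynomial factor, lowering the degree by one. The only way to reach degree $k$ is to bring down $x$ every time, and this produces exactly $x^{\otimes k}$ after the sign $(-1)^k$. The integration-by-parts route avoids even this check, so I would present that one as the primary argument.
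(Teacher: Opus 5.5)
Your proposal is correct. Your first route in Step 1 is exactly the paper's ``triangular basis'' argument, with the details written out: the leading term of $H_k$ is $x^{\otimes k}$ by the recursion $H_{k+1}=x\otimes H_k-\nabla\otimes H_k$, and you peel off $\langle H_m,a_m\rangle$ and induct downward on degree. Your Step 2 is the paper's degree count verbatim.

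The integration-by-parts route, which you say you would make primary, is genuinely different. It computes the chaos coefficients explicitly as $\alpha_k=\mathbb{E}[p(X)H_k(X)]=\mathbb{E}[\nabla^{\otimes k}p(X)]$, and these vanish for $k>m$. What this buys you is a closed formula for the coefficients. What it costs is that it is not lighter, despite your remark that it ``avoids even this check.''

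\begin{itemize}
\item Vanishing of $\alpha_k$ for $k>m$ only shows that $p$ has no component in $\mathcal{H}_k$ for $k>m$. To conclude $p\in\bigoplus_{q\le m}\mathcal{H}_q$ you must invoke the completeness half of Lemma~\ref{lem:L2_Hermite_expansion_prelim}. Without completeness, the residual $p-\sum_{k\le m}\frac{1}{k!}\langle H_k,\alpha_k\rangle$ is only known to be orthogonal to every chaos. Completeness is a substantially stronger fact than triangularity, and the paper cites it rather than proves it.
\item The resulting identity holds in $L^2(\phi)$. You should add that two polynomials agreeing $\phi$-a.e.\ agree everywhere.
\item Step 2 still needs $\deg H_j\le j$, so you cannot avoid inspecting $H_k$ altogether. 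You only avoid identifying the exact leading term.
\end{itemize}

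For a self-contained proof, the triangular argument (the paper's) is the more elementary choice.
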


\begin{proof}[Proof of Lemma~\ref{lem:polynomial_degree_chaos}]
The multivariate Hermite polynomials form a triangular basis for ordinary polynomials: the $q$-th Hermite chaos $\mathcal{H}_q$ consists of polynomial functions whose leading degree is $q$. Hence every polynomial of degree at most $m$ admits a finite Hermite expansion involving only chaos orders $0,\ldots,m$, which proves the first claim. For the second claim, each $f_j\in\mathcal{H}_j$ is a polynomial of degree at most $j$. Therefore $f_{j_1}\cdots f_{j_r}$ is a polynomial of degree at most $j_1+\cdots+j_r\le m$, and the first claim applies.
\end{proof}

\subsection{Hermite-moment expansion of the likelihood}
\label{subsec:hermite_moment_lr}

We now record a convenient Hermite expansion for the likelihood ratio with respect to the Gaussian noise baseline.
This expansion expresses the likelihood ratio coefficients in terms of the signal moment tensors
$\{T_k(\theta)\}_{k\ge 1}$, and will be used repeatedly to derive low-SNR asymptotics.

For each $k\ge 1$, recall the order-$k$ signal moment tensor $T_k(\theta)$ from Definition~\ref{def:signal_population_moment_tensor}, and that $\varphi_\sigma$ denote the $\mathcal{N}(0,\sigma^2 I_d)$ density on $\mathbb{R}^d$, and let $p_{\theta,\beta}$ be the marginal density of $Y$ under Model~\ref{model:meanscaling}. Throughout we write $t\;\triangleq\;\beta/\sigma$.
Define the likelihood ratio
\begin{align}
    \label{eq:lr_def}
    R_{\theta,t}(y) \;\triangleq\; \frac{p_{\theta,\beta}(y)}{\varphi_\sigma(y)},
    \qquad y\in\mathbb{R}^d.
\end{align}
Setting $x=y/\sigma$ and marginalizing over $Z\sim\mu$ gives
\begin{align}
    \label{eq:lr_exp_mixture_identity}
    R_{\theta,t}(y) &= \int_{\mathcal{Z}} \exp\!\left( t\langle x,A(z)\theta\rangle -\frac{t^2}{2}\|A(z)\theta\|^2 \right)\,\mu(dz) \nonumber\\
    &=
    \mathbb{E}_{Z\sim\mu} \left[ \exp\!\left( t\langle x,A(Z)\theta\rangle -\frac{t^2}{2}\|A(Z)\theta\|^2 \right) \right].
\end{align}

\begin{lem}[Hermite-moment expansion of the likelihood ratio]
\label{lem:hermite_moment_lr_expansion}
Consider Model~\ref{model:meanscaling} and assume Assumption~\ref{ass:integrability_max} holds.
Let $\{H_k\}_{k\ge 0}$ be the Hermite tensors defined in \eqref{eq:Hermite_tensor_def_prelim}.
Then for every $y\in\mathbb{R}^d$,
\begin{align}
    \label{eq:lr_hermite_series}
    R_{\theta,t}(y) \;=\; \sum_{k=0}^\infty \frac{t^k}{k!}\, \big\langle H_k(y/\sigma),\,T_k(\theta)\big\rangle,
\end{align}
where $T_0(\theta)\equiv 1$ and the series is absolutely convergent for each fixed $y$.
\end{lem}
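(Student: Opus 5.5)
The plan is to plug the Hermite generating function, Lemma~\ref{lem:Wick_identity_prelim}, into the mixture representation \eqref{eq:lr_exp_mixture_identity} and then exchange the latent expectation with the infinite sum. First, with $x=y/\sigma$ and $t=\beta/\sigma$, the Gaussian factors combine to
\[
\frac{\varphi_\sigma(y-\beta A(z)\theta)}{\varphi_\sigma(y)}=\exp\!\Big(t\langle x,A(z)\theta\rangle-\tfrac{t^2}{2}\|A(z)\theta\|^2\Big).
\]
This justifies \eqref{eq:lr_exp_mixture_identity}. For each fixed $z$, Lemma~\ref{lem:Wick_identity_prelim} with $u=A(z)\theta$ then gives
\[
\exp\!\Big(t\langle x,u\rangle-\tfrac{t^2}{2}\|u\|^2\Big)=\sum_{k\ge0}\frac{t^k}{k!}\langle H_k(x),u^{\otimes k}\rangle .
\]

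Next we integrate term by term against $\mu$. Each term is linear in $u^{\otimes k}$, so $\mathbb{E}_Z\langle H_k(x),(A(Z)\theta)^{\otimes k}\rangle=\langle H_k(x),T_k(\theta)\rangle$ by Definition~\ref{def:signal_population_moment_tensor}. Here $T_k(\theta)$ is finite because $\|A(Z)\theta\|\le a_{\max}\|\theta\|$ (Assumption~\ref{ass:integrability_max}). The $k=0$ term is $1=T_0$.

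The main step is the interchange of $\mathbb{E}_Z$ and $\sum_k$. We will use Fubini/dominated convergence, which requires a $z$-uniform summable bound on $\frac{t^k}{k!}|\langle H_k(x),u^{\otimes k}\rangle|$ for $\|u\|\le \rho\triangleq a_{\max}\|\theta\|$. The contraction $\langle H_k(x),u^{\otimes k}\rangle$ equals $\|u\|^k h_k(\langle x,u\rangle/\|u\|)$, where $h_k$ is the univariate probabilists' Hermite polynomial. This follows from the generating function itself, since the left side of \eqref{eq:Wick_expansion_prelim} depends on $u$ only through $\langle x,u\rangle$ and $\|u\|$. We then use the explicit expansion $h_k(s)=\sum_{j\le k/2}\frac{k!(-1)^j}{j!(k-2j)!2^j}s^{k-2j}$. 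Bounding each coefficient in absolute value gives
\[
|h_k(s)|\le \sum_j \frac{k!}{j!(k-2j)!2^j}|s|^{k-2j},
\]
which is the $k$-th coefficient (times $k!$) of $e^{|s|\tau+\tau^2/2}$. Hence
\[
\sum_k\frac{t^k}{k!}\,\rho^k\max_{|s|\le\|x\|}|h_k(s)|\le \exp\!\big(t\rho\|x\|+t^2\rho^2/2\big)<\infty .
\]
This bound is independent of $z$ (for a.e.\ $z$). It therefore gives both the absolute convergence of the series for fixed $y$ and the justification for exchanging expectation and sum, which completes the proof.
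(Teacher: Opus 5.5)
Your proposal is correct and follows the same route as the paper: you apply the generating-function identity (Lemma~\ref{lem:Wick_identity_prelim}) pointwise in $z$ with $u=A(z)\theta$, then integrate term by term against $\mu$. The only difference is that you make the interchange rigorous. Via the reduction $\langle H_k(x),u^{\otimes k}\rangle=\|u\|^k h_k(\langle x,u\rangle/\|u\|)$, you obtain the explicit $z$-uniform dominating bound $\exp\bigl(t\rho\|x\|+t^2\rho^2/2\bigr)$, whereas the paper only asserts that boundedness of $\|A(Z)\theta\|$ justifies the interchange.
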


\begin{proof}[Proof of Lemma~\ref{lem:hermite_moment_lr_expansion}]
Write $x\triangleq y/\sigma$. Applying the Wick identity  (Lemma~\ref{lem:Wick_identity_prelim}) on the expression within the expectation in~\eqref{eq:lr_exp_mixture_identity} with $u=A(Z)\theta$ gives, for each realization of the $Z$,
\begin{align}
    \exp\!\Big(t\langle x,A(Z)\theta\rangle-\tfrac{t^2}{2}\|A(Z)\theta\|^2\Big) = \sum_{k=0}^\infty \frac{t^k}{k!}\, \big\langle H_k(x),\,(A(Z)\theta)^{\otimes k}\big\rangle .
\end{align}
Taking $\mathbb{E}_{Z\sim\mu}[\cdot]$ term-by-term and using $T_k(\theta)=\mathbb{E}_{Z\sim\mu}[(A(Z)\theta)^{\otimes k}]$ yields \eqref{eq:lr_hermite_series}. The term-by-term interchange is justified under Assumption~\ref{ass:integrability_max} (uniform moment control of $\|A(Z)\theta\|$ on compact $\Theta$), which ensures absolute convergence of the resulting power series for each fixed $y$.
\end{proof}

We now pass from the exact Hermite expansion of the likelihood ratio to estimates that are uniform on compact sets. Recall that $x=y/\sigma$. We write $\phi$ for the standard Gaussian density on $\mathbb{R}^d$, so that if $Y\sim \varphi_\sigma$, then $X\triangleq Y/\sigma \sim \phi$.

\begin{lem}[Uniform Hermite expansion of the likelihood ratio]
\label{lem:uniform_Hermite_likelihood_ratio}
Assume Assumption~\ref{ass:integrability_max}, and let $U\subset\Theta$ be compact. Then, for every integer $L\ge 0$ and every $p<\infty$, there exist constants $C_{U,L,p}<\infty$ and $t_0>0$ such that, for all $|t|\le t_0$,
\begin{align}
    \sup_{\theta\in U} \left\| R_{\theta,t} - \sum_{k=0}^{L} \frac{t^k}{k!} \big\langle H_k, T_k(\theta)\big\rangle \right\|_{L^p(\phi)}   \le C_{U,L,p}|t|^{L+1}.
\end{align}
Moreover,
\begin{align}
    \sup_{\theta\in U,\ \|h\|\le 1} \left\| \partial_\varepsilon R_{\theta+\varepsilon h,t}\big|_{\varepsilon=0} - \sum_{k=1}^{L} \frac{t^k}{k!} \big\langle H_k, DT_k(\theta)[h]\big\rangle \right\|_{L^p(\phi)} \le C_{U,L,p}|t|^{L+1}.
\end{align}
\end{lem}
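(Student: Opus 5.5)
The most direct route avoids the Hermite series altogether and bounds the Taylor remainder of the exponential pointwise in $z$, using the $L^p(\phi)$ norm of a Gaussian exponential. Fix $\theta\in U$ and write $u=A(z)\theta$, so that $\|u\|\le a_{\max}B_U$ with $B_U\triangleq\sup_{\theta\in U}\|\theta\|$. Define $f(s)\triangleq\exp\bigl(s\langle x,u\rangle-\tfrac{s^2}{2}\|u\|^2\bigr)$. By Lemma~\ref{lem:Wick_identity_prelim}, the Taylor coefficients of $f$ at $s=0$ are $\langle H_k(x),u^{\otimes k}\rangle/k!$. Taylor's theorem with integral remainder then gives
\begin{align}
 f(t)-\sum_{k=0}^{L}\frac{t^k}{k!}\langle H_k(x),u^{\otimes k}\rangle=\frac{t^{L+1}}{L!}\int_0^1(1-\tau)^L\frac{\langle H_{L+1}(x-\tau t u),u^{\otimes(L+1)}\rangle}{1}\,f(\tau t)\,d\tau .
\end{align}
This uses $f^{(k)}(s)=\langle H_k(x-su),u^{\otimes k}\rangle f(s)$, which follows by completing the square, i.e.\ from the Rodrigues formula applied to $e^{-\|x-su\|^2/2}$.

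Next I would bound the integrand. Since $H_{L+1}$ is a polynomial of degree $L+1$, the integrand is at most $C(1+\|x\|)^{L+1}\|u\|^{L+1}e^{|t|\|x\|\|u\|}$ uniformly in $\tau\in[0,1]$. Integrating over $z$ (Jensen/Minkowski in $\mu$) and taking the $L^p(\phi)$ norm, I would use $\mathbb{E}_\phi\bigl[(1+\|X\|)^{p(L+1)}e^{p|t|R\|X\|}\bigr]<\infty$ uniformly for $|t|\le t_0$, where $R=a_{\max}B_U$; any $t_0$ works, say $t_0=1$. Averaging the pointwise expansion over $z\sim\mu$ turns $\mathbb{E}_\mu[u^{\otimes k}]$ into $T_k(\theta)$, and Fubini is justified by the same domination. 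This gives the first bound with $C_{U,L,p}\propto R^{L+1}$, uniformly in $\theta\in U$.

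For the derivative statement I would differentiate under $\mathbb{E}_\mu$, which is justified by the domination bounds of Lemma~\ref{lem:likelihood_fisher_regular}. Set $v=A(z)h$, with $\|v\|\le a_{\max}$. Then $\partial_\varepsilon R_{\theta+\varepsilon h,t}=\mathbb{E}_\mu\bigl[\partial_\varepsilon f_{u+\varepsilon v}(t)\bigr]$, and
$\partial_\varepsilon$ of $\langle H_k(x),(u+\varepsilon v)^{\otimes k}\rangle$ equals $k\langle H_k(x),u^{\otimes(k-1)}\otimes v\rangle$, whose $\mu$-average is $\langle H_k,DT_k(\theta)[h]\rangle$ under the symmetric-tensor convention of Remark~\ref{rem:symmetric_tensor_representatives}. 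The cleanest way to handle the remainder is to apply the same Taylor-remainder argument to the function $s\mapsto\partial_\varepsilon f_{u+\varepsilon v}(s)|_{\varepsilon=0}=s\langle x-su,v\rangle f(s)$. Its $k$-th Taylor coefficient is $\tfrac{1}{k!}k\langle H_k(x),u^{\otimes(k-1)}\otimes v\rangle$, by differentiating the Wick identity in $u$; this is valid because the series is entire in $(s,u)$. Its $(L+1)$-st derivative is again a polynomial in $(x,s,u,v)$ times $f(s)$, with degree bounded in terms of $L$, so it is bounded by $C(1+\|x\|)^{L+2}e^{|t|R\|x\|}$ uniformly over $\|h\|\le1$ and $\theta\in U$. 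The same $L^p(\phi)$ estimate then finishes the proof. An equivalent alternative is to treat the remainder as a function analytic in $(s,\varepsilon)$ and use Cauchy estimates on a small polydisc.

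The main obstacle is the uniform control of the exponential factor $e^{|t|\|x\|R}$ in $L^p(\phi)$, combined with the justification of the term-by-term interchanges. Both reduce to the Gaussian integrability of $e^{c\|X\|}$, so neither is a real difficulty. The nontrivial bookkeeping is verifying that the Taylor coefficients of the derivative function coincide exactly with $\langle H_k,DT_k(\theta)[h]\rangle/k!$, and this follows from the symmetry of $H_k$.
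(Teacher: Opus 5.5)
Your proposal is correct and follows essentially the paper's route: a Taylor expansion in $t$ of the Gaussian exponential for each fixed latent $z$, a remainder bound of the form (polynomial in $\|x\|$) times $e^{c|t|\|x\|}$ uniform in $z$ and $\theta\in U$, then Minkowski over $\mu$ and the Gaussian integrability of the exponential factor. Your explicit remainder via $f^{(k)}(s)=\langle H_k(x-su),u^{\otimes k}\rangle f(s)$ fills in detail the paper only gestures at. So does your handling of the derivative claim through $s\mapsto s\langle x-su,v\rangle f(s)$, where the paper merely says "the same orthogonality argument" applies.
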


\begin{proof}[Proof of Lemma~\ref{lem:uniform_Hermite_likelihood_ratio}]
Fix $U\subset\Theta$ compact, and define $M_U \triangleq a_{\max}\sup_{\theta\in U}\|\theta\|<\infty$, where $a_{\max}$ is the uniform bound from Assumption~\ref{ass:integrability_max}. Then, for $\mu$-a.e. $z$ and every $\theta\in U$, $\|A(z)\theta\|\le M_U$. Hence, uniformly over $\theta\in U$,
\begin{align}
    \|T_k(\theta)\| = \left\| \mathbb{E}_{Z\sim\mu}\big[(A(Z)\theta)^{\otimes k}\big] \right\| \le \mathbb{E}_{Z\sim\mu}\|A(Z)\theta\|^k \le M_U^k . \label{eq:Tk_uniform_bound}
\end{align}

For fixed $z$ and $\theta$, set $v=A(z)\theta$ and define $F_{v,t}(x) \triangleq \exp\!\left(t\langle x,v\rangle-t^2\|v\|^2/2\right)$. By the Hermite generating function (Lemma~\ref{lem:Wick_identity_prelim}), $F_{v,t}(x) = \sum_{k=0}^{\infty} \frac{t^k}{k!} \big\langle H_k(x),v^{\otimes k}\big\rangle$. Since $\|v\|\le M_U$, Taylor's theorem applied to $t\mapsto F_{v,t}(x)$ gives, uniformly over $\|v\|\le M_U$ and $|t|\le t_0$,
\begin{align}
    \left| F_{v,t}(x) - \sum_{k=0}^{L} \frac{t^k}{k!} \big\langle H_k(x),v^{\otimes k}\big\rangle \right| \le C_{U,L}|t|^{L+1} (1+\|x\|^{L+1}) \exp(c_U t_0\|x\|), \label{eq:Taylor_remainder_F_v_t}
\end{align}
for constants $C_{U,L},c_U<\infty$. The right-hand side belongs to $L^p(\phi)$ for every fixed $p<\infty$.

Now, since $R_{\theta,t}(x) = \mathbb{E}_{Z \sim \mu} [F_{A(Z)\theta,t}(x)]$, and $T_k(\theta) = \mathbb{E}_{Z \sim \mu}[(A(Z)\theta)^{\otimes k}]$, we may subtract the truncated series and write
\begin{align}
    &R_{\theta,t}(x)
    - \sum_{k=0}^{L} \frac{t^k}{k!} \big\langle H_k(x),T_k(\theta)\big\rangle
    \notag\\
    &\qquad =
    \mathbb{E}_{Z \sim \mu} \left[ F_{A(Z)\theta,t}(x) - \sum_{k=0}^{L} \frac{t^k}{k!} \big\langle H_k(x),(A(Z)\theta)^{\otimes k}\big\rangle \right]. \label{eq:R_truncated_remainder_as_expectation}
\end{align}
Therefore, by Minkowski's inequality and the bound in~\eqref{eq:Taylor_remainder_F_v_t},
\begin{align}
    \sup_{\theta\in U} \left\| R_{\theta,t} - \sum_{k=0}^{L} \frac{t^k}{k!} \big\langle H_k,T_k(\theta)\big\rangle \right\|_{L^p(\phi)} \le C_{U,L,p}|t|^{L+1}. \label{eq:uniform_Lp_likelihood_ratio_remainder}
\end{align}
This proves the first claim.

For the directional derivative, differentiate
$T_k(\theta) = \mathbb{E}_{Z\sim\mu}\big[(A(Z)\theta)^{\otimes k}\big]$
to obtain
\begin{align}
    DT_k(\theta)[h] = \sum_{j=1}^{k} \mathbb{E}_{Z\sim\mu}\Big[(A(Z)\theta)^{\otimes(j-1)} \otimes A(Z)h \otimes (A(Z)\theta)^{\otimes(k-j)}\Big].
\end{align}
Therefore, $\|DT_k(\theta)[h]\| \le k\,a_{\max}\,\|h\|\,M_U^{k-1}$,
uniformly in $\theta\in U$. Applying the same orthogonality argument to the derivative series gives the second estimate.
\end{proof}

\begin{remark}[Change of measure from $\phi$ to $p_{\theta,t}$]
\label{rem:change_measure_phi_to_p}
Throughout, Hermite remainders are first controlled under the pure-noise measure $\phi$~\eqref{eq:def_standard_gaussian_density_phi}, where the Hermite tensors are orthogonal. This is sufficient for the Fisher-information calculations under $p_{\theta,t}=R_{\theta,t}\phi$. Indeed, by the same boundedness argument used in Lemma~\ref{lem:uniform_Hermite_likelihood_ratio}, compactness of $\Theta$ and boundedness of $A(z)$ imply that there exists $B<\infty$ such that $\|A(z)\theta\|\le B$, uniformly over $\theta\in\Theta$ and a.e. $z$. Hence $R_{\theta,t}(x)\le \exp(tB\|x\|)$, so $R_{\theta,t}$ is uniformly bounded in $L^q(\phi)$, for every fixed $q<\infty$ and all sufficiently small $t$. Consequently, by Hölder, for every fixed $p>2$ and every measurable $r_t\in L^p(\phi)$, one has $\|r_t\|_{L^2(p_{\theta,t})}\le C_p\|r_t\|_{L^p(\phi)}$, uniformly over $\theta\in\Theta$ and all sufficiently small $t$. Since Lemma~\ref{lem:uniform_Hermite_likelihood_ratio} gives the required Taylor remainder bounds in $L^p(\phi)$ for every fixed $p<\infty$, these bounds transfer directly to $L^2(p_{\theta,t})$. We will therefore use the same low-SNR order estimates for Hermite remainders under $\phi$ and under $p_{\theta,t}$.
\end{remark}

\subsection{Finite-order expansion of the reciprocal likelihood ratio}
Recall the likelihood ratio $R_{\theta,t}(y)$ from~\eqref{eq:lr_def}, together with its Hermite expansion in~\eqref{eq:lr_hermite_series}, and recall the observed Fisher information from~\eqref{eq:def_I_obs}. It is convenient to introduce the directional observed score
\begin{align}
    \label{eq:score_dir2}
    S_{\theta,\beta}(y;h) \triangleq    \partial_\varepsilon\Big|_{\varepsilon=0} \log p_{\theta+\varepsilon h,\beta}(y),
\end{align}
where $y\in\mathbb{R}^d$ and $h\in\mathbb{R}^m$ is a parameter-space direction. This is the scalar score associated with the direction $h$, and its second moment yields the Fisher-information quadratic form in that direction. Indeed, for any $h\in\mathbb{R}^m$,
\begin{align}
    \mathbb{E}_{Y\sim p_{\theta^\star,\beta}}\!\left[S_{\theta^\star,\beta}(Y;h)^2 \right]
    &= \mathbb{E}_{Y\sim p_{\theta^\star,\beta}}\!\left[ \bigl(h^\top \nabla_\theta \log p_{\theta^\star,\beta}(Y)\bigr)^2\right] \\
    &=
    h^\top \mathcal{I}_{\mathrm{obs}}(\theta^\star;\beta)\, h.
\end{align}
Therefore,
\begin{align}
    \label{eqn:Fisher_matrix_to_directional_score_relation}
    h^\top \mathcal{I}_{\mathrm{obs}}(\theta^\star;\beta)\, h = \mathbb{E}_{Y\sim p_{\theta^\star,\beta}}\!\left[S_{\theta^\star,\beta}(Y;h)^2 \right],
    \qquad \forall h\in\mathbb{R}^m.
\end{align}

For the Hermite analysis, we write $y=\sigma x$. Since
$R_{\theta,t}(y)=p_{\theta,\beta}(y)/\varphi_\sigma(y)$ and
$\varphi_\sigma$ does not depend on $\theta$, it follows that
\begin{align}
    \label{eq:score_lr_relation_sigma_x}
    S_{\theta,\beta}(\sigma x;h)
    &= \partial_\varepsilon\Big|_{\varepsilon=0} \log R_{\theta+\varepsilon h,t}(\sigma x) \nonumber\\
    &= \frac{\partial_\varepsilon\big|_{\varepsilon=0} R_{\theta+\varepsilon h,t}(\sigma x)}{R_{\theta,t}(\sigma x)} .
\end{align}

To expand the score, we will also need a finite-order expansion of the reciprocal factor $R_{\theta^\star,t}^{-1}$. The following lemma is a direct Taylor expansion of $(1+u)^{-1}$, applied to the Hermite-moment expansion of the likelihood ratio. 

\begin{lem}[Finite-order expansion of the reciprocal likelihood ratio]
\label{lem:lr_inverse_expansion}
Assume Assumption~\ref{ass:integrability_max}. Fix $\theta^\star\in\Theta$ and let $L\ge 1$. Then, there exists $t_0>0$ such that for all $|t|<t_0$,
\begin{align}
    R_{\theta^\star,t}(y)^{-1} = \sum_{m=0}^{L} t^m\,b_m(y/\sigma) + O_{L^2(p_{\theta,t})}(t^{L+1}),
\end{align}
where $b_0\equiv 1$, and for each $m\ge 1$,
\begin{align}
    \label{eqn:def_b_m}
    b_m(x) = \sum_{r=1}^m (-1)^r \sum_{\substack{j_1+\cdots+j_r=m\\ j_i\ge 1}} c_{j_1}(x)\cdots c_{j_r}(x),
\end{align}
with
$c_m(x) \triangleq \frac{1}{m!}\,\big\langle H_m(x),T_m(\theta^\star)\big\rangle$.
In particular, each $b_m\in \bigoplus_{q=0}^m\mathcal{H}_q$ belongs to the direct sum of Hermite expansion of orders at most $m$, and the first-order term is $b_1(x) = -\big\langle H_1(x),T_1(\theta^\star)\big\rangle$.
\end{lem}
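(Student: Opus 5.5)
The plan is to write $R_{\theta^\star,t}=1+u_t$, where $u_t(x)\triangleq\sum_{k\ge1}\frac{t^k}{k!}\langle H_k(x),T_k(\theta^\star)\rangle$, and to expand $(1+u)^{-1}$ to finite order. The algebraic identity
\begin{align}
    (1+u)^{-1}=\sum_{r=0}^{L}(-u)^r+(-u)^{L+1}(1+u)^{-1}
\end{align}
holds pointwise wherever $1+u>0$, which is always the case since $R_{\theta^\star,t}>0$. Substituting the truncation $u_t=\sum_{j=1}^{L}t^jc_j+\rho_t$, we expand the powers $(-u_t)^r$ for $r\le L$ as multinomials in $t$. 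Collecting the coefficient of $t^m$ for $m\le L$ gives exactly $b_m=\sum_{r=1}^m(-1)^r\sum_{j_1+\cdots+j_r=m}c_{j_1}\cdots c_{j_r}$. Every remaining term either carries a power $t^{m'}$ with $m'\ge L+1$ (cross terms of the $c_j$) or contains at least one factor of $\rho_t$.

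For the polynomial terms, each $c_j$ is a polynomial of degree $j$ whose coefficients are bounded via \eqref{eq:Tk_uniform_bound}. Hence all finite products of $c_j$'s lie in every $L^p(\phi)$ with bounded norm, and the terms with $m'\ge L+1$ are $O(t^{L+1})$ in $L^p(\phi)$. For the terms involving $\rho_t$, Lemma~\ref{lem:uniform_Hermite_likelihood_ratio} gives $\|\rho_t\|_{L^p(\phi)}\le C|t|^{L+1}$ for every $p$. Combining this with H\"older's inequality and the $L^p$ bounds on the other factors, every product containing $\rho_t$ is $O(t^{L+1})$ in $L^p(\phi)$. The membership $b_m\in\bigoplus_{q\le m}\mathcal{H}_q$ follows directly from Lemma~\ref{lem:polynomial_degree_chaos}, since $c_j\in\mathcal{H}_j$ and $\sum j_i=m$. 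The case $b_1=-c_1=-\langle H_1,T_1\rangle$ can be read off the formula.

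The main obstacle is the tail term $(-u_t)^{L+1}R_{\theta^\star,t}^{-1}$, because $R^{-1}$ is unbounded: under the pure-noise measure, $R^{-1}$ can grow like $e^{tB\|x\|}$. I would first bound it uniformly over the mixture. By Jensen's inequality, $R_{\theta^\star,t}(x)\ge\exp\bigl(t\langle x,T_1(\theta^\star)\rangle-t^2B^2/2\bigr)$, so $R^{-1}\le e^{t^2B^2/2}e^{tB\|x\|}$, which lies in $L^q(\phi)$ uniformly for small $t$. Also $\|u_t\|_{L^p(\phi)}=O(t)$ by Lemma~\ref{lem:uniform_Hermite_likelihood_ratio} with $L=0$. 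H\"older's inequality then gives $\|u_t^{L+1}R^{-1}\|_{L^p(\phi)}=O(t^{L+1})$. (Alternatively, measured in $L^2(p_{\theta^\star,t})$, the factor $R^{-1}$ partially cancels against the density $R\phi$, which makes this step even easier.)

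Finally, I would transfer all $L^p(\phi)$ bounds with $p>2$ to $L^2(p_{\theta,t})$ via Remark~\ref{rem:change_measure_phi_to_p}, choosing $t_0$ small enough for all the uniform bounds above to hold.
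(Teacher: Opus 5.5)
Your proposal is correct and follows the same route as the paper. You write $R_{\theta^\star,t}=1+u$, expand $(1+u)^{-1}$ geometrically, substitute the truncated Hermite expansion, collect powers of $t$ to get $b_m$, and invoke Lemma~\ref{lem:polynomial_degree_chaos} for chaos membership. Your remainder handling is more careful than the paper's pointwise ``$O(u^{L+1})$'', which is not literally valid because $u$ is not uniformly small in $x$: the exact tail term $(-u)^{L+1}(1+u)^{-1}$, controlled via the Jensen bound $R_{\theta^\star,t}(x)\ge \exp\bigl(t\langle x,T_1(\theta^\star)\rangle - t^2B^2/2\bigr)$, together with explicit tracking of the $\rho_t$ cross terms, makes that step rigorous.
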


\begin{proof}[Proof of Lemma~\ref{lem:lr_inverse_expansion}]
Recall that $x=y/\sigma$. By Lemma~\ref{lem:hermite_moment_lr_expansion},
\begin{align}
    R_{\theta^\star,t}(y) = 1+\sum_{m=1}^{L} t^m\,c_m(x) + O_{L^2(p_{\theta,t})}(t^{L+1}),
\end{align}
where $c_m(x) \triangleq \frac{1}{m!}\,\big\langle H_m(x),T_m(\theta^\star)\big\rangle$, so each $c_m \in \mathcal{H}_m$ lies in Hermite chaos $m$. Since $R_{\theta^\star,t}(y)=p_{\theta^\star,\beta}(y)/\varphi_\sigma(y)$, it is strictly positive for every $y\in\mathbb{R}^d$. Using the Taylor expansion $(1+u)^{-1}=\sum_{r=0}^{L}(-1)^r u^r+O(u^{L+1})$ and substituting $u=\sum_{m=1}^{L} t^m c_m(x)$, we obtain
\begin{align}
    R_{\theta^\star,t}(y)^{-1} = \sum_{m=0}^{L} t^m\,b_m(x) + O_{L^2(p_{\theta,t})}(t^{L+1}),
\end{align}
where $b_0\equiv 1$ and, for each $m\ge 1$, the coefficient of $t^m$ is
\begin{align}
    b_m(x) = \sum_{r=1}^m (-1)^r \sum_{\substack{j_1+\cdots+j_r=m\\ j_i\ge 1}} c_{j_1}(x)\cdots c_{j_r}(x).
\end{align}
Thus $b_m$ is a finite polynomial in $c_1,\ldots,c_m$. Since $c_j\in\mathcal{H}_j$, Lemma~\ref{lem:polynomial_degree_chaos} implies that every monomial $c_{j_1}\cdots c_{j_r}$ with $j_1+\cdots+j_r=m$ belongs to $\bigoplus_{q=0}^m\mathcal{H}_q$. Therefore, $b_m\in \bigoplus_{q=0}^m\mathcal{H}_q$.
Finally,
\begin{align}
    b_1(x) = -c_1(x) = -\big\langle H_1(x),T_1(\theta^\star)\big\rangle.
\end{align}
This proves the claim.
\end{proof}

\subsection{Finite-order Hermite truncation of the score}

Recall the definition of the directional observed score from~\eqref{eq:score_lr_relation_sigma_x}. The next lemma gives a finite-order Hermite expansion of the score around the ground truth. 

\begin{lem}[Finite-order Hermite truncation of the score]
\label{lem:score_Hermite_truncation}
Assume Assumption~\ref{ass:integrability_max}, and let $L\ge 1$. Fix $\theta^\star\in\Theta$. Then, there exists $t_0>0$ such that for all $|t|<t_0$ and all $h\in W$,
\begin{align}
    S_{\theta^\star,\beta}(\sigma x;h) = \sum_{q=1}^{L} t^q \sum_{j=1}^{q} a_j(x;h)\,b_{q-j}(x) + r_{L,t}(x;h),
\end{align}
where
\begin{align}
    a_j(x;h) \triangleq \frac{1}{j!}\, \big\langle H_j(x),DT_j(\theta^\star)[h]\big\rangle,
    \qquad j\ge 1,
\end{align}
$b_0\equiv 1$, and $b_m$~\eqref{eqn:def_b_m} is the coefficient of $t^m$ in the expansion of $R_{\theta^\star,t}(\sigma x)^{-1}$ from Lemma~\ref{lem:lr_inverse_expansion}. In particular, for each $q$, the coefficient 
\begin{align}
    \sum_{j=1}^{q}a_j(x;h)b_{q-j}(x) \in \bigoplus_{r=0}^{q}\mathcal{H}_r.
\end{align}
depends linearly on $h$ and belongs to the direct sum of Hermite chaoses of orders at most $q$. Moreover, uniformly for $h$ in bounded subsets of $W$,
\begin{align}
    \|r_{L,t}(x;h)\|_{L^2(p_{\theta, \beta})} =    O\left(t^{L+1}\|h\|\right).
\end{align}
\end{lem}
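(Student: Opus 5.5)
The plan is to write the score as the product of the directional derivative of the likelihood ratio and the reciprocal likelihood ratio, expand each factor to order $t^L$ using the earlier lemmas, multiply, and put every cross term beyond order $L$ into the remainder.

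\textbf{Step 1 (the two expansions).} By \eqref{eq:score_lr_relation_sigma_x},
\[
S_{\theta^\star,\beta}(\sigma x;h)=N_t(x;h)\,R_{\theta^\star,t}(\sigma x)^{-1},
\qquad N_t(x;h)\triangleq\partial_\varepsilon R_{\theta^\star+\varepsilon h,t}(\sigma x)\big|_{\varepsilon=0}.
\]
The second estimate of Lemma~\ref{lem:uniform_Hermite_likelihood_ratio}, applied with $U$ a compact neighborhood of $\theta^\star$, gives
\[
N_t(x;h)=\sum_{j=1}^L t^j a_j(x;h)+\rho_t(x;h),
\qquad \|\rho_t\|_{L^p(\phi)}\le C\,t^{L+1}\|h\|,
\]
for every $p<\infty$. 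Linearity in $h$ lets us rescale to $\|h\|\le 1$. Lemma~\ref{lem:lr_inverse_expansion} gives
\[
R^{-1}_{\theta^\star,t}=\sum_{m=0}^L t^m b_m+\eta_t .
\]

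\textbf{Step 2 (multiply and collect).} Multiplying the two expansions and grouping the products $t^{j+m}a_jb_m$ with $j+m=q\le L$ gives exactly the stated coefficient $\sum_{j=1}^q a_j b_{q-j}$. This coefficient is linear in $h$, since each $a_j$ is linear in $h$ and the $b_m$ do not depend on $h$. Each $a_j$ is a polynomial of degree $j$ and each $b_{q-j}$ lies in $\bigoplus_{r\le q-j}\mathcal H_r$, so it is a polynomial of degree at most $q-j$. Lemma~\ref{lem:polynomial_degree_chaos} then places the product in $\bigoplus_{r=0}^q\mathcal H_r$.

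The remainder $r_{L,t}$ collects three kinds of terms:
\begin{itemize}
\item[(i)] the products $t^{j+m}a_jb_m$ with $j+m>L$;
\item[(ii)] $\rho_t\,R^{-1}_{\theta^\star,t}$;
\item[(iii)] $\bigl(\sum_{j} t^j a_j\bigr)\eta_t$.
\end{itemize}
Terms of type (i) are polynomials with coefficients bounded by $C\|h\|$, because $\|DT_j(\theta^\star)[h]\|\le j\,a_{\max}M_U^{j-1}\|h\|$. They are therefore $O(t^{L+1}\|h\|)$ in every $L^p(\phi)$.

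\textbf{Step 3 (norm control, the main obstacle).} The difficulty is that $R^{-1}$ is not polynomially bounded a priori, and the bound on $\eta_t$ in Lemma~\ref{lem:lr_inverse_expansion} is stated only in $L^2(p_{\theta,t})$. Products must instead be controlled by H\"older's inequality in higher $L^p$ norms, which calls for a direct estimate of $R^{-1}$ rather than relying on its Taylor series. Using $\|A(z)\theta^\star\|\le B$, we get the pointwise bound
\[
R_{\theta^\star,t}(\sigma x)\ge \exp\bigl(-tB\|x\|-t^2B^2/2\bigr).
\]
Hence
\[
R^{-1}_{\theta^\star,t}\le e^{tB\|x\|+t^2B^2/2},
\]
which lies in every $L^q(\phi)$ uniformly for small $t$.

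With this, the term (ii) satisfies
\[
\bigl\|\rho_t R^{-1}\bigr\|_{L^2(p)}\le C\,\|\rho_t\|_{L^{4}(\phi)}\,\|R^{-1}\|_{L^{8}(\phi)}\,\|R\|^{1/2}_{L^{4}(\phi)}=O\bigl(t^{L+1}\|h\|\bigr),
\]
by Remark~\ref{rem:change_measure_phi_to_p}.

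For (iii) I will re-derive the remainder of $R^{-1}$ in $L^p(\phi)$. Write $R=1+u$ with $u=\sum_{m=1}^L t^m c_m+\tilde\rho$, where $\|\tilde\rho\|_{L^p}=O(t^{L+1})$. The exact identity
\[
(1+u)^{-1}=\sum_{r=0}^L(-u)^r+(-u)^{L+1}(1+u)^{-1}
\]
expresses $\eta_t$ through finitely many pieces. The first kind is the polynomial-in-$c_m$ terms of total $t$-order $>L$. The second kind is products of $\tilde\rho$ with powers of $u$. The third is $u^{L+1}R^{-1}$. Each of these is $O(t^{L+1})$ in every $L^p(\phi)$, using $\|u\|_{L^p}=O(t)$, the $L^q$ bound on $R^{-1}$, and H\"older's inequality.

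Multiplying $\eta_t$ by the polynomial $\sum_{j} t^j a_j$, whose norm is $O(t\|h\|)$, and transferring from $L^p(\phi)$ to $L^2(p_{\theta^\star,\beta})$ via Remark~\ref{rem:change_measure_phi_to_p}, gives the bound $O(t^{L+1}\|h\|)$ for (iii). The constant is uniform over bounded $h$. This completes the estimate on $r_{L,t}$.
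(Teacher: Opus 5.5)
Your proposal is correct and takes the paper's route: write the score as $\partial_\varepsilon R_{\theta^\star+\varepsilon h,t}\cdot R_{\theta^\star,t}^{-1}$, multiply the truncations from Lemma~\ref{lem:uniform_Hermite_likelihood_ratio} and Lemma~\ref{lem:lr_inverse_expansion}, collect powers of $t$, and place each coefficient in $\bigoplus_{r\le q}\mathcal{H}_r$ via Lemma~\ref{lem:polynomial_degree_chaos}. The only difference is that your Step~3 proves the remainder bound the paper says ``follows immediately'': the pointwise bound $R_{\theta^\star,t}\ge e^{-tB\|x\|-t^2B^2/2}$, the exact identity $(1+u)^{-1}=\sum_{r\le L}(-u)^r+(-u)^{L+1}(1+u)^{-1}$, and H\"older in $L^p(\phi)$ are what is needed here, because products of remainders controlled only in $L^2(p_{\theta^\star,\beta})$ are not controlled in $L^2$, and $u$ is unbounded, so a pointwise Taylor remainder for $(1+u)^{-1}$ is unavailable.
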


\begin{proof}[Proof of Lemma~\ref{lem:score_Hermite_truncation}]
By the second part of Lemma~\ref{lem:uniform_Hermite_likelihood_ratio}, evaluated at $\theta=\theta^\star$, the directional derivative of the likelihood ratio admits the finite-order expansion
\begin{align}
    \partial_\varepsilon\Big|_{\varepsilon=0}R_{\theta^\star+\varepsilon h,t}(\sigma x) = \sum_{j=1}^{L} t^j\,a_j(x;h) + O_{L^2(p_{\theta,t})}(t^{L+1}\|h\|),
    \label{eqn:app_B40}
\end{align}
uniformly for $h$ in bounded subsets of $W$, where $a_j(x;h) = \frac{1}{j!}\, \big\langle H_j(x),DT_j(\theta^\star)[h]\big\rangle$.
Likewise, by Lemma~\ref{lem:lr_inverse_expansion},
\begin{align}
    R_{\theta^\star,t}(\sigma x)^{-1} = \sum_{m=0}^{L} t^m\,b_m(x) + O_{L^2(p_{\theta,t})}(t^{L+1}).
    \label{eqn:app_B41}
\end{align}

Since
\begin{align}
    S_{\theta^\star,\beta}(\sigma x;h) = \frac{\partial_\varepsilon|_{\varepsilon=0} R_{\theta^\star+\varepsilon h,t}(\sigma x)}{ R_{\theta^\star,t}(\sigma x)},
    \label{eqn:app_B42}
\end{align}
multiplying the two truncated expansions and collecting powers of $t$ yields
\begin{align}
    S_{\theta^\star,\beta}(\sigma x;h) = \sum_{q=1}^{L} t^q \sum_{j=1}^{q} a_j(x;h)\,b_{q-j} + r_{L,t}(x;h),
\end{align}
where $r_{L,t}$ collects terms of total order at least $L+1$.

Because $a_j(\cdot;h)\in\mathcal{H}_j$, it is a scalar Hermite polynomial of 
degree at most $j$. Moreover, by Lemma~\ref{lem:lr_inverse_expansion}, $b_{q-j}\in\bigoplus_{r=0}^{q-j}\mathcal{H}_r$, and hence $b_{q-j}$ is a polynomial of degree at most $q-j$. Therefore $a_j(\cdot;h)b_{q-j}$ is a polynomial of degree at most $q$. By Lemma~\ref{lem:polynomial_degree_chaos}, $a_j(x;h)b_{q-j} \in \bigoplus_{r=0}^{q}\mathcal{H}_r$.
Consequently,
\begin{align}
    \sum_{j=1}^{q}a_j(x;h)b_{q-j}(x) \in \bigoplus_{r=0}^{q}\mathcal{H}_r .
\end{align}
Its dependence on $h$ is linear because each $a_j(\cdot;h)$ is linear in $h$, while
$b_{q-j}$ is independent of $h$.
Finally, the $L^2(p_{\theta, \beta})$ remainder follows immediately from combining the remainders in~\eqref{eqn:app_B40}--\eqref{eqn:app_B42}.
\end{proof}

\subsection{Proof of Proposition~\ref{prop:fisher_layer_block_asymptotics}}
\label{app:proof_fisher_layer_block_asymptotics}

Write $t\triangleq \beta/\sigma$, so $\mathrm{SNR}=t^2$. Let $\phi$ denote the $\mathcal{N}(0,I_d)$ density, set $X\triangleq Y/\sigma$, and recall $T_k(\theta)=\mathbb{E}_{Z\sim\mu}[(A(Z)\theta)^{\otimes k}]$.

\paragraph{Step 1: Likelihood ratio and Hermite-moment expansion.}
Fix $y=\sigma x$. Recall the definition of the likelihood ratio $R_{\theta,t}(y) = p_{\theta,\beta}(y)/\varphi_\sigma(y)$ from~\eqref{eq:lr_def}-\eqref{eq:lr_exp_mixture_identity},
\begin{align}
    \label{eq:R_def_dirIobs}
    R_{\theta,t}(y)
    &=
    \int_{\mathcal{Z}} \exp\!\left( t\langle x,A(z)\theta\rangle -\frac{t^2}{2}\|A(z)\theta\|^2 \right)\,\mu(dz) \nonumber\\
    &=
    \mathbb{E}_{Z\sim\mu} \left[ \exp\!\left( t\langle x,A(Z)\theta\rangle -\frac{t^2}{2}\|A(Z)\theta\|^2 \right) \right].
\end{align}
By Lemma~\ref{lem:hermite_moment_lr_expansion},
\begin{align}
    \label{eq:Rtheta_hermite_series}
    R_{\theta,t}(y) \;=\; \sum_{k=0}^\infty \frac{t^k}{k!}\, \big\langle H_k(y/\sigma),\,T_k(\theta)\big\rangle.
\end{align}
Under Assumption~\ref{ass:integrability_max}, $\theta\mapsto T_k(\theta)$ is differentiable and differentiation may be interchanged with the expectation defining $T_k$ (by dominated convergence). Hence, for any direction $h\in\mathbb{R}^m$,
\begin{align}
    \label{eq:dR_series}   \partial_\varepsilon\Big|_{\varepsilon=0}R_{\theta^\star+\varepsilon h,t}(y) = \sum_{k=1}^{\infty}\frac{t^k}{k!}\, \big\langle H_k(x),\,DT_k(\theta^\star)[h]\big\rangle .
\end{align}
Moreover, by Lemma~\ref{lem:uniform_Hermite_likelihood_ratio}, the series in \eqref{eq:Rtheta_hermite_series} and \eqref{eq:dR_series} converge in $L^2(p_{\theta, \beta})$ for $|t|$ sufficiently small, and truncation remainders admit uniform $L^2(p_{\theta, \beta})$ control under Assumption~\ref{ass:integrability_max}.

Since $h\in U_k$ and $g\in U_\ell$, the layer definition gives
$DT_j(\theta^\star)[h]=0$ for every $j<k$, and $DT_j(\theta^\star)[g]=0$ for every $j<\ell$.
Hence, by Lemma~\ref{lem:uniform_Hermite_likelihood_ratio},
\begin{align}
    \label{eq:dR_h_leading_blockIobs}   \partial_\varepsilon\Big|_{\varepsilon=0}    R_{\theta^\star+\varepsilon h,t}(y)
    &=
    \frac{t^k}{k!}\,
    \big\langle H_k(x),DT_k(\theta^\star)[h]\big\rangle
    +
    \frac{t^{k+1}}{(k+1)!}\,
    \big\langle H_{k+1}(x),DT_{k+1}(\theta^\star)[h]\big\rangle
    +
    O_{L^2(p_{\theta,t})}(t^{k+2}),
    \\    \label{eq:dR_g_leading_blockIobs}   \partial_\varepsilon\Big|_{\varepsilon=0}    R_{\theta^\star+\varepsilon g,t}(y)
    &=
    \frac{t^\ell}{\ell!}\,
    \big\langle H_\ell(x),DT_\ell(\theta^\star)[g]\big\rangle
    +
    \frac{t^{\ell+1}}{(\ell+1)!}\,
    \big\langle H_{\ell+1}(x),DT_{\ell+1}(\theta^\star)[g]\big\rangle
    +
    O_{L^2(p_{\theta,t})}(t^{\ell+2}).
\end{align}
Also, by the definition in~\eqref{eq:R_def_dirIobs},
\begin{align}
    \label{eq:Rstar_expansion_blockIobs}
    R_{\theta^\star,t}(y) = 1+t\,\big\langle H_1(x),T_1(\theta^\star)\big\rangle
    +O_{L^2(p_{\theta,t})}(t^2),
\end{align}
hence, by Lemma~\ref{lem:lr_inverse_expansion},
\begin{align}
    \label{eq:Rinv_expansion_blockIobs}
    R_{\theta^\star,t}(y)^{-1} = 1-t\,\big\langle H_1(x),T_1(\theta^\star)\big\rangle +O_{L^2(p_{\theta,t})}(t^2).
\end{align}

\paragraph{Step 2: Score expansion.}
Recall the definition of the observed score in direction $h$~\eqref{eq:score_lr_relation_sigma_x},
\begin{align}
    \label{eq:score_dir}
    S_{\theta,t}(y;h) \;\triangleq\; \partial_\varepsilon\Big|_{\varepsilon=0}\log p_{\theta+\varepsilon h,\beta}(\sigma x) = \partial_\varepsilon\Big|_{\varepsilon=0}\log R_{\theta+\varepsilon h,t}(y),
\end{align}
since $\varphi_\sigma(y)$ does not depend on $\theta$.
Combining~\eqref{eq:dR_h_leading_blockIobs}-\eqref{eq:dR_g_leading_blockIobs}, together with~\eqref{eq:Rinv_expansion_blockIobs}--\eqref{eq:score_dir}, the preceding expansions gives
\begin{align}
    \label{eq:score_h_expansion_blockIobs}
    S_{\theta^\star,t}(y;h)
    &=
    t^k A_h(x) + t^{k+1} B_h(x) + O_{L^2(p_{\theta,t})}(t^{k+2}),
    \\    \label{eq:score_g_expansion_blockIobs}
    S_{\theta^\star,t}(y;g)
    &=
    t^\ell A_g(x) + t^{\ell+1} B_g(x) + O_{L^2(p_{\theta,t})}(t^{\ell+2}),
\end{align}
where
\begin{align}
    \label{eq:Ah_def_blockIobs}
    A_h(x) &\triangleq  \frac{1}{k!}\, \big\langle H_k(x),DT_k(\theta^\star)[h]\big\rangle,
    \\    \label{eq:Ag_def_blockIobs}
    A_g(x) &\triangleq  \frac{1}{\ell!}\, \big\langle H_\ell(x),DT_\ell(\theta^\star)[g]\big\rangle,
\end{align}
and
\begin{align}
    \label{eq:Bh_def_blockIobs}
    B_h(x)
    &\triangleq 
    \frac{1}{(k+1)!}\, \big\langle H_{k+1}(x),DT_{k+1}(\theta^\star)[h]\big\rangle - \big\langle H_1(x),T_1(\theta^\star)\big\rangle A_h(x),
    \\    \label{eq:Bg_def_blockIobs}
    B_g(x)
    &\triangleq 
    \frac{1}{(\ell+1)!}\, \big\langle H_{\ell+1}(x),DT_{\ell+1}(\theta^\star)[g]\big\rangle - \big\langle H_1(x),T_1(\theta^\star)\big\rangle A_g(x).
\end{align}
By the product formula for Hermite chaoses, multiplication by the order-one chaos
$\langle H_1,T_1(\theta^\star)\rangle$ maps chaos $r$ into the direct sum of chaoses $r+1$ and $r-1$. Hence $B_h$ is a linear combination of chaoses $k+1$ and $k-1$, and $B_g$ is a linear combination of chaoses $\ell+1$ and $\ell-1$.

\paragraph{Step 3: Bilinear Fisher pairing.}
By definition of the observed Fisher information in Definition~\ref{def:I_obs_meanscaling}, and the observed score in~\eqref{eq:score_dir}
\begin{align}
    \label{eq:fisher_bilinear_blockIobs}
    \big\langle h,\mathcal{I}_{\mathrm{obs}}^{(W)}(\theta^\star;\beta)g\big\rangle
    &=     \mathbb{E}_{Y\sim p_{\theta^\star,\beta}}\!\big[S_{\theta^\star,t}(Y;h)\,S_{\theta^\star,t}(Y;g) \big]
    \\
    &=
     \int S_{\theta^\star,t}(\sigma x;h)\,S_{\theta^\star,t}(\sigma x;g)\,    \varphi_\sigma(\sigma x)R_{\theta^\star,t}(\sigma x)\,dx
     \\
    &=
    \int S_{\theta^\star,t}(x;h)\,S_{\theta^\star,t}(x;g)\, \phi( x)R_{\theta^\star,t}(x)\,dx.
    \nonumber
\end{align}
Using the expansions~\eqref{eq:score_h_expansion_blockIobs}-\eqref{eq:score_g_expansion_blockIobs}, together with~\eqref{eq:Rstar_expansion_blockIobs}, yields
\begin{align}
    \label{eq:fisher_expand_blockIobs}
    S_{\theta^\star,t}(y;h)\,S_{\theta^\star,t}(y;g)\,R_{\theta^\star,t}(y)
    &=
    t^{k+\ell}A_h(x)A_g(x)
    \\
    &\quad
    +t^{k+\ell+1} \Big( A_h(y)B_g(x)+B_h(x)A_g(x) +\langle H_1(x),T_1(\theta^\star)\rangle A_h(x)A_g(x) \Big)
    \nonumber\\
    &\quad
    +O(t^{k+\ell+2}).
    \nonumber
\end{align}
Hence,
\begin{align}
    \label{eq:fisher_expand_expectation_blockIobs}
    \big\langle h,\mathcal{I}_{\mathrm{obs}}^{(W)}(\theta^\star;\beta)g\big\rangle
    &= t^{k+\ell}\,\mathbb{E}_\phi[A_h(X)A_g(X)]
    \\
    &\quad
    +t^{k+\ell+1}\, \mathbb{E}_\phi\!\Big[ A_h(X)B_g(X)+B_h(X)A_g(X) +\langle H_1(X),T_1(\theta^\star)\rangle A_h(X)A_g(X) \Big]
    \nonumber\\
    &\quad
    +O(t^{k+\ell+2}).
    \nonumber
\end{align}

\paragraph{Step 4: Leading term and diagonal case.}
By orthogonality of Hermite chaoses (Lemma~\ref{lem:Hermite_orthogonality_prelim}),
\begin{align}
    \label{eq:AhAg_eval_blockIobs}    \mathbb{E}_\phi[A_h(X)A_g(X)] =
    \begin{cases}
        0, & k\neq \ell,\\[1ex]
        \displaystyle
        \frac{1}{k!}\,
        \big\langle DT_k(\theta^\star)[h],DT_k(\theta^\star)[g]\big\rangle_F,
        & k=\ell.
    \end{cases}
\end{align}
If $k=\ell$, then the coefficient of order $t^{2k+1}$ in
\eqref{eq:fisher_expand_expectation_blockIobs} vanishes. Indeed,
$A_h$ lies in chaos $k$, while $B_g$ and $B_h$ lie in chaoses $k+1$ and $k-1$, so
\begin{align}
    \mathbb{E}_\phi[A_hB_g]=\mathbb{E}_\phi[B_hA_g]=0
\end{align}
by orthogonality. Moreover, $A_hA_g$ is a sum of even chaoses only, and therefore $\langle H_1,T_1(\theta^\star)\rangle A_hA_g$ is a sum of odd chaoses only, whose expectation vanishes. Hence
\begin{align}
    \label{eq:diag_case_blockIobs}
    \big\langle h,\mathcal{I}_{\mathrm{obs}}^{(W)}(\theta^\star;\beta)g\big\rangle =  t^{2k}\,\frac{1}{k!}\, \big\langle DT_k(\theta^\star)[h],DT_k(\theta^\star)[g]\big\rangle_F +O(t^{2k+2}),
    \qquad k=\ell.
\end{align}

\paragraph{Step 5: Off-diagonal orders.}
Assume $k\neq \ell$. Then the leading term
$t^{k+\ell}\mathbb{E}_\phi[A_hA_g]$ vanishes by chaos orthogonality. It remains to analyze the coefficient of order $t^{k+\ell+1}$.
\begin{enumerate}
    \item $\mathbb{E}_\phi[A_hB_g]$ can be nonzero only if $k=\ell\pm1$;
    \item $\mathbb{E}_\phi[B_hA_g]$ can be nonzero only if $k=\ell\pm1$;
    \item $\mathbb{E}_\phi[\langle H_1,T_1(\theta^\star)\rangle A_hA_g]$ can be nonzero only if $|k-\ell|=1$, since the product of chaoses $1,k,\ell$ must contain the zero-th chaos.
\end{enumerate}
Therefore,
\begin{align}
    \label{eq:adjacent_case_blockIobs}
    \big\langle h,\mathcal{I}_{\mathrm{obs}}^{(W)}(\theta^\star;\beta)g\big\rangle = O(t^{k+\ell+1}),
    \qquad |k-\ell|=1,
\end{align}
while if $|k-\ell|\ge 2$, all order-$t^{k+\ell+1}$ terms vanish, so
\begin{align}
    \label{eq:separated_case_blockIobs}
    \big\langle h,\mathcal{I}_{\mathrm{obs}}^{(W)}(\theta^\star;\beta)g\big\rangle =  O(t^{k+\ell+2}),
    \qquad |k-\ell|\ge 2.
\end{align}

Combining \eqref{eq:diag_case_blockIobs}, \eqref{eq:adjacent_case_blockIobs}, and \eqref{eq:separated_case_blockIobs}, and using the arbitrariness of $h\in U_k$ and $g\in U_\ell$, yields \eqref{eq:block_fisher_layered}.

\subsection{Proof of Corollary~\ref{cor:dir_exponents_Iobs}}
\label{app:proof_dir_exponents_Iobs}

\paragraph{Step 1: Directional scaling.}
Write $h=\sum_{k=1}^{r_{\mathrm{loc}}} h_k$ with $h_k\in U_k$, and let $r=r(h)$~\eqref{eqn:def_r_h}. By definition of $r(h)$, we have $h\in V_r$ but $h\notin V_{r+1}$. Since the filtration decomposition gives $V_r=\bigoplus_{k\ge r} U_k$ and $V_{r+1}=\bigoplus_{k\ge r+1} U_k$, it follows that $h_k=0$ for $k<r$ and $h_r\neq 0$. Moreover, for every $k>r$, $h_k\in V_{r+1}$, and therefore $DT_r(\theta^\star)[h_k]=0$. Consequently,
\begin{align}
    DT_r(\theta^\star)[h]=DT_r(\theta^\star)[h_r].
\end{align}
By Proposition~\ref{prop:fisher_layer_block_asymptotics}, the only contribution at order $t^{2r}$ comes from the $(r,r)$-block, while all remaining terms are $O(t^{2r+2})$. Therefore,
\begin{align}
    h^\top \mathcal{I}_{\mathrm{obs}}^{(W)}(\theta^\star;\beta) h = \frac{t^{2r}}{r!}\, \|DT_r(\theta^\star)[h_r]\|_F^2 + O(t^{2r+2}).
\end{align}
Since $DT_r(\theta^\star)[h_r]=DT_r(\theta^\star)[h]$ and $t^2=\mathrm{SNR}$, this yields
\begin{align}
    h^\top \mathcal{I}_{\mathrm{obs}}^{(W)}(\theta^\star;\beta) h = \frac{\|DT_{r(h)}(\theta^\star)[h]\|_F^2}{r(h)!}\, \mathrm{SNR}^{\,r(h)} + O\bigl(\mathrm{SNR}^{\,r(h)+1}\bigr).
    \label{eqn:app_B72}
\end{align}

\paragraph{Step 2: Minimal eigenvalue scaling.}
Since $r_{\mathrm{loc}}<\infty$, by Definition~\ref{def:rloc}, there exists $\hat{h}\in W$ with $\|\hat{h}\|=1$ such that $r(\hat{h})=r_{\mathrm{loc}}$. By~\eqref{eqn:app_B72}, applied to $h=\hat{h}$, 
\begin{align}
    \label{eq:hhat_rayleigh}    \hat{h}^\top\mathcal{I}_{\mathrm{obs}}^{(W)}(\theta^\star;\beta)\hat{h} = \Theta(\mathrm{SNR}^{\,r(\hat{h})}) = \Theta(\mathrm{SNR}^{\,r_{\mathrm{loc}}}).
\end{align}
Since $\mathcal{I}_{\mathrm{obs}}(\theta^\star;\beta)\succeq 0$, the smallest eigenvalue of its restriction to $W$ satisfies
\begin{align}
    \label{eq:lmin_rayleigh_use}  \lambda_{\min}\!\Big(\mathcal{I}_{\mathrm{obs}}^{(W)}(\theta^\star;\beta)\Big) = \min_{\|h\|=1,\,h\in W} h^\top\mathcal{I}_{\mathrm{obs}}^{(W)}(\theta^\star;\beta)h \ \le\ \hat{h}^\top\mathcal{I}_{\mathrm{obs}}^{(W)}(\theta^\star;\beta)\hat{h}.
\end{align}

By the layer construction,
$DT_k(\theta^\star)$ is injective on $U_k$. Hence, since each $U_k$ is finite-dimensional,
there exists $c>0$ such that
\begin{align}
    \frac{1}{k!}\|DT_k(\theta^\star)[h_k]\|_F^2 \ge c\|h_k\|^2,
    \qquad h_k\in U_k,
\end{align}
for every $k=1,\ldots,r_{\mathrm{loc}}$

Using the uniform block expansion in Proposition~\ref{prop:fisher_layer_block_asymptotics}, the higher-order diagonal corrections and off-diagonal blocks can be absorbed into the leading diagonal terms for sufficiently small $t$. Therefore, for all $h\in W$,
\begin{align}
    h^\top \mathcal{I}_{\mathrm{obs}}^{(W)}(\theta^\star;\beta) h \ge c\sum_{k=1}^{r_{\mathrm{loc}}} t^{2k}\|h_k\|^2 .
\end{align}
Since $t^{2k}\ge t^{2r_{\mathrm{loc}}}$ for $0<t<1$ and $k\le r_{\mathrm{loc}}$, this gives
\begin{align}
    h^\top \mathcal{I}_{\mathrm{obs}}^{(W)}(\theta^\star;\beta) h \ge c\,t^{2r_{\mathrm{loc}}}\|h\|^2 = c\,\mathrm{SNR}^{\,r_{\mathrm{loc}}}\|h\|^2 .
\end{align}
Taking the minimum over all unit vectors $h\in W$ yields
\begin{align}
    \lambda_{\min}\!\big(\mathcal{I}_{\mathrm{obs}}^{(W)}(\theta^\star;\beta)\big) \ge c\,\mathrm{SNR}^{\,r_{\mathrm{loc}}}.    \label{eqn:minimial_eigen_lower_bound}
\end{align}
Combining \eqref{eq:lmin_rayleigh_use} with \eqref{eq:hhat_rayleigh} for the upper bound and~\eqref{eqn:minimial_eigen_lower_bound} yields
\begin{align}
    \lambda_{\min}\!\Big(\mathcal{I}_{\mathrm{obs}}^{(W)}(\theta^\star;\beta)\Big) = \Theta(\mathrm{SNR}^{\,r_{\mathrm{loc}}}),
\end{align}
as $\mathrm{SNR} \to 0$, which is \eqref{eq:lmin_Iobs_scale_main}.

\section{Statistical efficiency of generalized method of moments}
\label{app:gmom_efficiency}

\subsection{Noise-debiased moment features} \label{subsec:empirical_moments}

This subsection provides the Hermite-based construction underlying the empirical feature vector introduced in Section~\ref{subsec:empirical_features_main}. 
Let $Y$ denote an observation from Model~\ref{model:meanscaling}. Recall that $T_j(\theta)$ denotes the order-$j$ signal moment tensor. The key identity is that Hermite tensors exactly remove the explicit Gaussian contribution of the additive noise.
Indeed, conditioning on $Z$, applying Lemma~\ref{lem:Wick_identity_prelim} to the Gaussian shift $Y/\sigma=(\beta/\sigma)A(Z)\theta+\xi$, and comparing coefficients in the generating-function expansion yields, for every $j\ge 1$,
\begin{align}
    \label{eq:Hermite_signal_identity} \mathbb{E}_{Y\sim p_{\theta,\beta}}\!\left[H_j\!\left(\frac{Y}{\sigma}\right) \right] = \left(\frac{\beta}{\sigma}\right)^j T_j(\theta).
\end{align}
Thus, $H_j(Y/\sigma)$ is the canonical noise-debiased observable associated with the order-$j$ signal moment. This motivates the noise-debiased empirical moments used in the main text.

\begin{definition}[Noise-debiased empirical signal moments]
\label{def:empirical_signal_moments_hermite}
For each $j=1,\ldots,L$, define
\begin{align}
    \label{eq:Hermite_empirical_signal_moment_identity}    \left(\frac{\beta}{\sigma}\right)^j \widehat{T}_{j,n} \triangleq \frac{1}{n}\sum_{i=1}^n H_j(y_i/\sigma).
\end{align}
\end{definition}

To pass from the debiased tensors to the feature vector used in the GMoM criterion, we stack the Hermite observables up to order $L$,
\begin{align}
    \label{def:stacked_Hermite_feature_map}
    \psi(y)
    \triangleq
    \begin{bmatrix} H_1(y/\sigma)\\ H_2(y/\sigma)\\ \vdots\\    H_L(y/\sigma)
    \end{bmatrix}.
\end{align}
Then, the empirical feature vector from \eqref{eq:def_Mn_maintext} is exactly the empirical mean of $\psi(y_i)$, and its population counterpart in \eqref{eq:def_Mtheta_maintext} is $ \mathbb{E}_{Y\sim p_{\theta,\beta}}[\psi(Y)]$. In particular, \eqref{eq:Hermite_signal_identity} yields
\begin{align}
     \mathbb{E}_{Y\sim p_{\theta,\beta}}[\psi(Y)] =
    \begin{bmatrix}
        (\beta/\sigma)\,T_1(\theta)\\   (\beta/\sigma)^2\,T_2(\theta)\\
        \vdots\\        (\beta/\sigma)^L\,T_L(\theta)
    \end{bmatrix}.
\end{align}
This is precisely the normalized truncated moment map used throughout the GMoM construction.

\subsection{Low-SNR limiting covariance}
\label{subsec:low_snr_covariance_matrix}

The low-SNR comparison between the Fisher information and the GMoM information matrices is governed by the pure-noise limit of the moment covariance. Recall the definition of the moment covariance matrix $\Sigma_L(\theta^\star, \beta)$ from~\eqref{eqn:def_Sigma_L}. Then, its limit as $\beta \to 0$ is given explicitly by,
\begin{align}
    \label{eq:def_sigma_GMoM_updated}
    \Sigma_L^{(0)} & \triangleq \Cov\bigl(\psi(\sigma\xi)\bigr) 
    \\ & = \mathbb{E}\!\Big[\bigl(\psi(\sigma\xi)-\mathbb{E}[\psi(\sigma\xi)]\bigr)\bigl(\psi(\sigma\xi)-\mathbb{E}[\psi(\sigma\xi)]\bigr)^\top\Big] \in \mathbb{R}^{N_L\times N_L},
\end{align}
where $\xi\sim\mathcal{N}(0,I_d)$, and $\psi$ is the stacked Hermite feature map from~\eqref{def:stacked_Hermite_feature_map}. Since $\mathbb{E}[H_j(\xi)]=0$ for every $j\ge 1$, we have $\mathbb{E}[\psi(\sigma\xi)]=0$, and therefore
\begin{align}
    \Sigma_L^{(0)} = \mathbb{E}\big[\psi(\sigma\xi)\psi(\sigma\xi)^\top\big].
\end{align}

The next proposition gives a closed-form expression for $\Sigma_L^{(0)}$. 

\begin{proposition}[Closed form of the Gaussian limiting covariance]
\label{prop:sigma0_closed_form_updated}
Let $\xi\sim\mathcal{N}(0,I_d)$, and represent each order-$j$ tensor block in an orthonormal basis with respect to the Frobenius inner product, under the symmetric-tensor convention of Remark~\ref{rem:symmetric_tensor_representatives}. Then the Gaussian limiting covariance $\Sigma_L^{(0)}$ of the stacked Hermite feature vector is block diagonal with respect to $\bigoplus_{j=1}^L (\mathbb{R}^d)^{\otimes j}$, and is given by
\begin{align}
    \Sigma_L^{(0)} = \operatorname{diag}\bigl( 1!I_{d_1},\, 2!I_{d_2},\, \ldots,\, L!I_{d_L} \bigr), \label{eq:sigma0_closed_form_orthonormal}
\end{align}
where $d_j=\dim(\mathbb{R}^d)^{\otimes j}$.
\end{proposition}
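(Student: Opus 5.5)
The plan is to compute $\Sigma_L^{(0)}=\mathbb{E}[\psi(\sigma\xi)\psi(\sigma\xi)^\top]$ blockwise. First, note that $\psi(\sigma\xi)=(H_1(\xi),\ldots,H_L(\xi))$ because $(\sigma\xi)/\sigma=\xi$. Hence every block only involves Hermite tensors evaluated at a standard Gaussian vector. We have already observed that $\mathbb{E}[H_j(\xi)]=0$ for $j\ge1$, which is orthogonality with $\mathcal{H}_0$ in Lemma~\ref{lem:Hermite_orthogonality_prelim}. So the covariance equals the uncentered second moment, and it suffices to evaluate the bilinear forms
\begin{align}
    a^\top \Sigma_L^{(0)} b = \sum_{j,\ell=1}^L \mathbb{E}\big[\langle H_j(\xi),a_j\rangle\langle H_\ell(\xi),b_\ell\rangle\big],
\end{align}
for stacked vectors $a=(a_1,\ldots,a_L)$ and $b=(b_1,\ldots,b_L)$ with $a_j,b_j$ symmetric order-$j$ tensors.

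The key step is a direct application of Lemma~\ref{lem:Hermite_orthogonality_prelim}, which gives each summand as $\delta_{j,\ell}\,j!\,\langle a_j,b_j\rangle$. The vanishing of the off-diagonal blocks $j\neq\ell$ yields the block-diagonal structure with respect to $\bigoplus_j(\mathbb{R}^d)^{\otimes j}$. On the $j$-th diagonal block, the bilinear form is $j!\langle a_j,b_j\rangle_F$. Choose an orthonormal basis $\{e^{(j)}_1,\ldots,e^{(j)}_{d_j}\}$ of the symmetric order-$j$ tensor space with respect to the Frobenius inner product, following the convention of Remark~\ref{rem:symmetric_tensor_representatives}. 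The coordinates of the $j$-th block of $\psi$ are then $\langle H_j(\xi),e^{(j)}_p\rangle$, and their covariance matrix is $j!\,\langle e^{(j)}_p,e^{(j)}_q\rangle=j!\,\delta_{pq}$, i.e., $j!\,I_{d_j}$. Assembling the blocks gives \eqref{eq:sigma0_closed_form_orthonormal}.

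The only point needing care is the identification of the coordinate representation. Lemma~\ref{lem:Hermite_orthogonality_prelim} is stated for pairings of $H_k$ with arbitrary tensors $a,b$. Since $H_j(\xi)$ is symmetric, as it is the $j$-fold derivative of a smooth function (Definition~\ref{def:kfold_tensor_derivative}), pairing with any tensor depends only on its symmetric part. So restricting to an orthonormal basis of the symmetric space loses nothing, and the identity $\mathbb{E}[\langle H_j,a\rangle\langle H_j,b\rangle]=j!\langle a,b\rangle$ holds with $\langle\cdot,\cdot\rangle$ the Frobenius inner product of the symmetric representatives. Consequently, $d_j$ must be read as the dimension of the symmetric tensor space under this convention; with a non-orthonormal coordinate system, such as raw entries with repeated indices, the block would instead be $j!$ times a Gram matrix. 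Beyond this bookkeeping, no analytic obstacle arises: all moments are finite since $H_j$ are polynomials, so the expectations are well defined.
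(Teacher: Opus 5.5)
Your proposal is correct and follows essentially the same route as the paper: both apply the Hermite orthogonality lemma to get $\mathbb{E}\bigl[\langle H_j(\xi),u\rangle\langle H_k(\xi),v\rangle\bigr]=\delta_{jk}\,j!\,\langle u,v\rangle_F$, conclude block-diagonality, and read off $j!\,I_{d_j}$ from an orthonormal basis of each block. Your added observation is correct and consistent with Remark~\ref{rem:symmetric_tensor_representatives}: $d_j$ is the dimension of the symmetric tensor space. With raw coordinates of the full space $(\mathbb{R}^d)^{\otimes j}$, each block would instead be $j!$ times the symmetrization projector, which is singular.
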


\begin{proof}[Proof of Proposition~\ref{prop:sigma0_closed_form_updated}]
For $u\in(\mathbb{R}^d)^{\otimes j}$ and $v\in(\mathbb{R}^d)^{\otimes k}$, the Gaussian orthogonality of Hermite tensors (Lemma~\ref{lem:Hermite_orthogonality_prelim}) gives
\begin{align}
    \mathbb{E}\!\left[ \langle H_j(\xi),u\rangle \langle H_k(\xi),v\rangle \right] = \delta_{jk}\,j!\,\langle u,v\rangle_F .
\end{align}
Thus Hermite tensors of different orders are uncorrelated, and within the $j$-th block the covariance operator is $j!$ times the identity with respect to the Frobenius inner product. Since the block is represented in an orthonormal basis, its covariance matrix is $j!I_{d_j}$. This yields~\eqref{eq:sigma0_closed_form_orthonormal}.
\end{proof}

We next record the basic positivity properties of the limiting covariance. The previous proposition already shows that $\Sigma_L^{(0)}$ is positive definite, but it is useful to state this explicitly, since it will later imply invertibility of the finite-$\beta$ covariance for all sufficiently small $\beta$.

\begin{corollary}[Positivity of the Gaussian limiting covariance]
\label{cor:sigma_positive_definite_updated}
The matrix $\Sigma_L^{(0)}$ is positive definite.
\end{corollary}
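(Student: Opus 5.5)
This follows directly from the closed form in Proposition~\ref{prop:sigma0_closed_form_updated}. The plan is to observe that $\Sigma_L^{(0)}$ is, in the chosen orthonormal basis of each symmetric tensor block, the block-diagonal matrix $\operatorname{diag}(1!I_{d_1},\ldots,L!I_{d_L})$. Its eigenvalues are therefore exactly the factorials $j!$, each with multiplicity $d_j$. Since $j!\ge 1>0$ for every $j$, all eigenvalues are strictly positive. Equivalently, for any nonzero stacked vector $v=(v_1,\ldots,v_L)$ with $v_j\in(\mathbb{R}^d)^{\otimes j}$,
\begin{align}
    v^\top\Sigma_L^{(0)}v=\sum_{j=1}^L j!\,\|v_j\|_F^2\;\ge\;\|v\|^2>0 .
\end{align}

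To make this argument independent of the basis bookkeeping, I would verify the quadratic form directly, without first passing to the matrix. Expanding $\mathbb{E}\big[(\sum_j\langle H_j(\xi),v_j\rangle)^2\big]$ and applying Lemma~\ref{lem:Hermite_orthogonality_prelim} kills all cross terms between different orders and leaves the diagonal terms $j!\,\|v_j\|_F^2$. This yields the displayed identity immediately. Here $H_j(y/\sigma)$ evaluated at $y=\sigma\xi$ is simply $H_j(\xi)$, so no $\sigma$-dependence enters.

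The only subtle point is the symmetric-tensor convention in Remark~\ref{rem:symmetric_tensor_representatives}. The raw tensor $H_j(\xi)$ takes values in the symmetric subspace, so on the full space $(\mathbb{R}^d)^{\otimes j}$ its covariance is singular along antisymmetric directions. Positive definiteness therefore has to be understood on the symmetric tensor space $\mathrm{Sym}^j(\mathbb{R}^d)$, with $d_j$ being its dimension. That is exactly the space on which the feature vector $\psi$ is represented. With this identification, the statement follows at once. I expect no real obstacle beyond stating this convention clearly.
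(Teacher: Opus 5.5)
Your proposal is correct and follows the paper's own argument: read off the block-diagonal form $\operatorname{diag}(1!I_{d_1},\ldots,L!I_{d_L})$ from Proposition~\ref{prop:sigma0_closed_form_updated} and note that $j!>0$ for every $j$. Your direct check of the quadratic form via Hermite orthogonality, and your remark that positive definiteness is meant on the symmetric tensor space (the convention of Remark~\ref{rem:symmetric_tensor_representatives}), are accurate refinements of the same argument rather than a different route.
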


\begin{proof}[Proof of Corollary~\ref{cor:sigma_positive_definite_updated}]
By Proposition~\ref{prop:sigma0_closed_form_updated}, in the orthonormal tensor-block representation, $\Sigma_L^{(0)}$ is block diagonal with diagonal blocks $j!I_{d_j}$, for $j=1,\ldots,L$. Since $j!>0$ for every $j$, each block $j!I_{d_j}$ is positive definite. Therefore, $\Sigma_L^{(0)}$ is positive definite.
\end{proof}

We now return to the finite-$\beta$ covariance matrix $\Sigma_L(\theta^\star;\beta)$. Since $\Sigma_L(\theta^\star;\beta)\to \Sigma_L^{(0)}$ as $\beta\to 0$, the positivity of $\Sigma_L^{(0)}$ implies that $\Sigma_L(\theta^\star;\beta)$ remains invertible for all sufficiently small $\beta$.

\begin{proof}[Proof of Lemma~\ref{lem:sigma_beta_positive_definite_small_beta}]
Since $\Sigma_L^{(0)}\succ 0$, one has $\lambda_{\min}(\Sigma_L^{(0)})>0$. By Weyl's inequality,
\begin{align}
    \lambda_{\min}\bigl(\Sigma_L(\theta^\star;\beta)\bigr) \ge \lambda_{\min}\bigl(\Sigma_L^{(0)}\bigr) - \bigl\|\Sigma_L(\theta^\star;\beta)-\Sigma_L^{(0)}\bigr\|_{\mathrm{op}}.
\end{align}
Because $\Sigma_L(\theta^\star;\beta)\to \Sigma_L^{(0)}$ in operator norm as $\beta\to 0$, the right-hand side is strictly positive for all sufficiently small $\beta$. Therefore $\Sigma_L(\theta^\star;\beta)\succ 0$ for all sufficiently small $\beta$.
\end{proof}

\subsection{Score projection onto the selected GMoM span}
\label{subsec:score_projection_selected_features}

We now connect the low-SNR score expansion to the moment features used in the GMoM criterion. The key idea is that the selected Hermite feature vector captures the Hermite chaoses of the score up to order $L$. Thus, the component of the score visible to GMoM is obtained by orthogonally projecting the score onto the linear span of these selected features. 

Denote $t = \beta/\sigma$. Recall the definition of $\psi(y)$ from~\eqref{def:stacked_Hermite_feature_map} and of $ DM(\theta^\star;\beta) \triangleq DM(\theta^\star;\beta)\big|_W$ from~\eqref{eqn:def_Jacobian_moments}. 
Fix $L\triangleq  L_{\mathrm{mom}}\ge 1$, and let
\begin{align}
    g(Y,\theta) \triangleq \psi(Y)-M(\theta^\star;\beta) \quad \in \mathbb{R}^{N_L}
\end{align}
denote the centered Hermite feature vector.
The next lemma shows that the component of the directional score $S_{\theta^\star,\beta}(y;h)$ from~\eqref{eq:score_dir2} captured by the selected GMoM feature span is
\begin{align}
    \big\langle \Sigma_L(\theta^\star;\beta)^{-1}DM(\theta^\star;\beta)h,\, g(Y, \theta)\big\rangle
\end{align}
and that the remaining component is of order $O(t^{L+1})$.

\begin{lem}[Score projection onto the selected GMoM span]
\label{lem:score_projection_selected_moments}
Recall the definition of the directional score $S_{\theta^\star,\beta}(y;h)$ from~\eqref{eq:score_dir2}. Assume Assumption~\ref{ass:integrability_max}, let $L\ge 1$, and fix $\theta^\star\in\Theta$. Assume moreover that $\Sigma_L(\theta^\star;\beta)$~\eqref{eqn:def_Sigma_L} is invertible. Then, for every $h\in W$,
\begin{align}
    S_{\theta^\star,\beta}(Y;h) = \Big\langle \Sigma_L(\theta^\star;\beta)^{-1}DM(\theta^\star;\beta)h,\, g(Y, \theta) \Big\rangle + r_\beta(Y;h),
\end{align}
and
\begin{align}
     \mathbb{E}_{Y\sim p_{\theta^\star,\beta}}\!\Big[\Big\langle \Sigma_L(\theta^\star;\beta)^{-1}DM(\theta^\star;\beta)h,\, g(Y, \theta) \Big\rangle r_\beta(Y;h) \Big] =0.
\end{align}
Moreover, uniformly for $h$ in bounded subsets of $W$,
\begin{align}
    \mathbb{E}_{Y\sim p_{\theta^\star,\beta}}\!\left[r_\beta(Y;h)^2\right]^{1/2} = O\!\left(t^{L+1}\|h\|\right),
\end{align}
\end{lem}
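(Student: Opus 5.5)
The plan is to identify the first term as the $L^2(p_{\theta^\star,\beta})$-orthogonal projection of the score onto the span of the centered features, and then bound the residual by comparing with the Hermite truncation of the score.

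\emph{Step 1: the first term is the projection.} Let $\mathcal{G}\triangleq\{\langle c,g(Y,\theta^\star)\rangle: c\in\mathbb{R}^{N_L}\}\subset L^2(p_{\theta^\star,\beta})$. This is a finite-dimensional subspace of centered functions. Its Gram matrix is $\Sigma_L(\theta^\star;\beta)$, which is invertible by assumption. The orthogonal projection of $S\triangleq S_{\theta^\star,\beta}(Y;h)$ onto $\mathcal{G}$ is therefore $\langle \Sigma_L^{-1}\mathbb{E}[g\,S],\,g\rangle$.

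We compute the cross-moment by differentiating under the integral, which Lemma~\ref{lem:likelihood_fisher_regular} justifies. Since $\psi$ does not depend on $\theta$,
\begin{align}
\mathbb{E}_{p_{\theta^\star,\beta}}[g(Y,\theta^\star)S] = \partial_\varepsilon\big|_{\varepsilon=0}\mathbb{E}_{p_{\theta^\star+\varepsilon h,\beta}}[\psi(Y)] = DM(\theta^\star;\beta)h.
\end{align}
Here the centering term drops out because $\mathbb{E}[S]=0$. Hence the first term of the decomposition is exactly this projection. Defining $r_\beta$ as $S$ minus the projection, $r_\beta$ is orthogonal to all of $\mathcal{G}$, which gives the stated orthogonality relation.

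\emph{Step 2: the residual bound.} Since the projection minimizes the $L^2$ distance over $\mathcal{G}$, it suffices to exhibit some element of $\mathcal{G}$ within $O(t^{L+1}\|h\|)$ of $S$.

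By Lemma~\ref{lem:score_Hermite_truncation}, $S=P_L(X;h)+r_{L,t}$, where $\|r_{L,t}\|_{L^2(p)}=O(t^{L+1}\|h\|)$. The main part $P_L=\sum_{q\le L}t^q\sum_j a_jb_{q-j}$ lies in $\bigoplus_{r=0}^L\mathcal{H}_r$ and depends linearly on $h$. Every element of $\bigoplus_{r=1}^L\mathcal{H}_r$ is of the form $\langle c,\psi(Y)\rangle$, and the constants are in $\mathcal{H}_0$. So $P_L=c_0+\langle c,\psi\rangle$ for some $c_0\in\mathbb{R}$ and $c\in\mathbb{R}^{N_L}$, and hence
\begin{align}
P_L=\langle c,g\rangle+\bigl(c_0+\langle c,M(\theta^\star;\beta)\rangle\bigr).
\end{align}
The constant on the right equals $\mathbb{E}_p[P_L]=\mathbb{E}_p[S]-\mathbb{E}_p[r_{L,t}]=-\mathbb{E}_p[r_{L,t}]$, whose absolute value is $O(t^{L+1}\|h\|)$ by Cauchy--Schwarz. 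Therefore $\|S-\langle c,g\rangle\|_{L^2(p)}=O(t^{L+1}\|h\|)$, and the projection inherits the same bound.

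Uniformity over bounded sets of $h$ follows from linearity in $h$ and the uniform remainder bound in Lemma~\ref{lem:score_Hermite_truncation}.

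\emph{Main obstacle.} The delicate point is the remainder control in Step~2. The bounds of Lemma~\ref{lem:score_Hermite_truncation} must hold in $L^2(p_{\theta^\star,\beta})$, not merely in $L^2(\phi)$. This is handled via Remark~\ref{rem:change_measure_phi_to_p}.

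A subtler issue is that the truncation's chaos content must be no higher than $L$, so that it genuinely lies in $\mathrm{span}\{1,\psi\}$. The degree bookkeeping of Lemma~\ref{lem:polynomial_degree_chaos} guarantees this.
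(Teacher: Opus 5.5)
Your proof is correct and follows essentially the same route as the paper. You identify the first term as the $L^2(p_{\theta^\star,\beta})$-orthogonal projection onto the span of the centered features, using the normal equations and the score identity $\mathbb{E}[g\,S]=DM(\theta^\star;\beta)h$, and then bound the residual by the best-approximation property applied to the Hermite truncation of Lemma~\ref{lem:score_Hermite_truncation}. Your treatment of the constant ($\mathcal{H}_0$) component and the centering shift $\langle c,M(\theta^\star;\beta)\rangle$ is in fact more careful than the paper's, which simply asserts that the truncated score lies in the span of the coordinates of $g$; you instead show that this constant equals $-\mathbb{E}_p[r_{L,t}]=O(t^{L+1}\|h\|)$.
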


\begin{proof}[Proof of Lemma~\ref{lem:score_projection_selected_moments}]
Fix $h\in W$. By Lemma~\ref{lem:score_Hermite_truncation}, applied with $Y=\sigma X$, we have
\begin{align}
    S_{\theta^\star,\beta}(Y;h) = Q_{L,\beta}(Y;h)+\widetilde{R}_{L,\beta}(Y;h),
\end{align}
where
\begin{align}
    Q_{L,\beta}(Y;h) \triangleq \sum_{q=1}^{L} t^q \sum_{j=1}^{q} a_j(Y/\sigma;h)\,b_{q-j}(Y/\sigma),
\end{align}
and $\|\widetilde{R}_{L,\beta}(\cdot;h)\|_{L^2(\phi)} = O\left(t^{L+1}\|h\|\right)$, uniformly for $h$ in bounded subsets of $W$.

Moreover, by Lemma~\ref{lem:score_Hermite_truncation}, for each $q\le L$, the coefficient $\sum_{j=1}^{q} a_j(\cdot;h)\,b_{q-j} \; \in \;  \bigoplus_{r=0}^{q}\mathcal{H}_r$ belongs to the direct sum of Hermite chaoses of orders at most $q$. Hence $Q_{L,\beta}(Y;h) \in \bigoplus_{q=0}^{L}\mathcal{H}_q$ belongs to the direct sum of Hermite chaoses of orders at most $L$. Since $\psi(Y)$ contains the full coordinate representations of the Hermite tensors $H_1(Y/\sigma),\dots,H_L(Y/\sigma)$, every scalar Hermite polynomial of orders at most $L$ can be written as a linear combination of the coordinates of $\psi(Y)$ (see Lemma~\ref{lem:L2_Hermite_expansion_prelim}). Therefore $Q_{L,\beta}(Y;h)$ belongs to the linear span of the coordinates of $\psi(Y)$, and hence also of $g(Y, \theta)=\psi(Y)-M(\theta^\star;\beta)$.

We now compute the orthogonal projection of the full score onto the corresponding centered
feature span. Since this span is generated by the coordinates of $g(Y, \theta)$, the projection
has the form
\begin{align}
    \label{eq:score_projection_form}
    \big\langle \alpha_\beta(h),g(Y, \theta)\big\rangle
\end{align}
for some coefficient vector $\alpha_\beta(h)$. By the normal equations for orthogonal
projection in the mean-square inner product under $Y\sim p_{\theta^\star,\beta}$, the
residual must be orthogonal to every coordinate of $g(Y, \theta)$, that is,
\begin{align}
    \label{eq:score_projection_normal_equations}
    \mathbb{E}_{Y\sim p_{\theta^\star,\beta}}\!\left[g(Y, \theta)\Big(S_{\theta^\star,\beta}(Y;h) - \big\langle \alpha_\beta(h),g(Y, \theta)\big\rangle \Big) \right] = 0 .
\end{align}
Expanding \eqref{eq:score_projection_normal_equations} gives
\begin{align}
    \label{eq:score_projection_normal_equations_expanded}
    \mathbb{E}_{Y\sim p_{\theta^\star,\beta}}\!\big[g(Y, \theta)S_{\theta^\star,\beta}(Y;h) \big] = \mathbb{E}_{Y\sim p_{\theta^\star,\beta}}\!\big[ g(Y, \theta)g(Y, \theta)^\top \big]\alpha_\beta(h).
\end{align}
By the definition of the feature covariance matrix~\eqref{eqn:def_Sigma_L},
\begin{align}
    \label{eq:Z_covariance_equals_Sigma}
    \mathbb{E}_{Y\sim p_{\theta^\star,\beta}}\!\big[ g(Y, \theta)g(Y, \theta)^\top \big] =\Sigma_L(\theta^\star;\beta).
\end{align}
Substituting \eqref{eq:Z_covariance_equals_Sigma} into
\eqref{eq:score_projection_normal_equations_expanded} yields
\begin{align}
    \label{eq:normal_equations_Sigma_alpha}    \Sigma_L(\theta^\star;\beta)\,\alpha_\beta(h) = \mathbb{E}_{Y\sim p_{\theta^\star,\beta}}\!\big[g(Y, \theta)S_{\theta^\star,\beta}(Y;h) \big].
\end{align}

It remains to identify the right-hand side of
\eqref{eq:normal_equations_Sigma_alpha}. Since $M(\theta;\beta) = \mathbb{E}_{Y\sim p_{\theta,\beta}}[\psi(Y)]$, differentiating $M(\theta;\beta)$ in the direction $h \in W$ at
$\theta=\theta^\star$ gives the score identity
\begin{align}
    \label{eq:DM_score_identity_projection_proof}
    DM(\theta^\star;\beta)h = \mathbb{E}_{Y\sim p_{\theta^\star,\beta}}\!\big[ \psi(Y)S_{\theta^\star,\beta}(Y;h) \big].
\end{align}
On the other hand, using $g(Y, \theta)=\psi(Y)-M(\theta^\star;\beta)$, we have
\begin{align}
    \label{eq:Z_score_expand_projection_proof}
    \mathbb{E}_{Y\sim p_{\theta^\star,\beta}}\!\big[g(Y, \theta)S_{\theta^\star,\beta}(Y;h) \big]
    &=
    \mathbb{E}_{Y\sim p_{\theta^\star,\beta}}\!\big[\psi(Y)S_{\theta^\star,\beta}(Y;h)\big] \nonumber\\
    &\quad
    - M(\theta^\star;\beta) \mathbb{E}_{Y\sim p_{\theta^\star,\beta}}\!\big[ S_{\theta^\star,\beta}(Y;h)\big].
\end{align}
The score has mean zero:
\begin{align}
    \label{eq:score_mean_zero_projection_proof}
    \mathbb{E}_{Y\sim p_{\theta^\star,\beta}}\!\big[ S_{\theta^\star,\beta}(Y;h) \big] =  0.
\end{align}
Therefore, combining
\eqref{eq:Z_score_expand_projection_proof},
\eqref{eq:score_mean_zero_projection_proof}, and
\eqref{eq:DM_score_identity_projection_proof}, we obtain
\begin{align}
    \label{eq:Z_score_equals_DM_projection_proof}
    \mathbb{E}_{Y\sim p_{\theta^\star,\beta}}\!\big[g(Y, \theta)S_{\theta^\star,\beta}(Y;h) \big] = DM(\theta^\star;\beta)h.
\end{align}
Substituting \eqref{eq:Z_score_equals_DM_projection_proof} into
\eqref{eq:normal_equations_Sigma_alpha} gives
\begin{align}
    \label{eq:alpha_beta_solution_projection_proof}
    \alpha_\beta(h) = \Sigma_L(\theta^\star;\beta)^{-1} DM(\theta^\star;\beta)h.
\end{align}
Finally, substituting \eqref{eq:alpha_beta_solution_projection_proof} into
\eqref{eq:score_projection_form}, the orthogonal projection is
\begin{align}
    \label{eq:score_projection_final_form}
    \Big\langle \Sigma_L(\theta^\star;\beta)^{-1}DM(\theta^\star;\beta)h,\, g(Y, \theta) \Big\rangle .
\end{align}

For the remainder, since orthogonal projection is the best approximation in the mean-square norm under the true observation law, we have
\begin{align}
    \mathbb{E}_{Y\sim p_{\theta^\star,\beta}}\!\left[r_\beta(Y;h)^2\right]^{1/2} \le \mathbb{E}_{Y\sim p_{\theta^\star,\beta}} \!\left[\widetilde{R}_{L,\beta}(Y;h)^2\right]^{1/2} = O\!\left(t^{L+1}\|h\|\right),
\end{align}
uniformly for $h$ in bounded subsets of $W$.

\end{proof}

\subsection{Proof of Proposition~\ref{prop:GMoM_model_consistency}}
\label{sec:proof_prop_GMoM_model_consistency}

By construction, $M_n(\mathcal{Y})\xrightarrow[]{\mathbb{P}}M(\theta^\star;\beta)$. Under Assumption~\ref{ass:integrability_max}, the map $\theta\mapsto M(\theta;\beta)$ is continuous. Since $\Omega_n\xrightarrow[]{\mathbb{P}}\Sigma_L(\theta^\star;\beta)^{-1}$, the sample criterion converges uniformly in probability, on every compact subset of $\Theta$, to
\begin{align}
    Q(\theta) = \bigl(M(\theta^\star;\beta)-M(\theta;\beta)\bigr)^\top\Sigma_L(\theta^\star;\beta)^{-1} \bigl(M(\theta^\star;\beta)-M(\theta;\beta)\bigr). \label{eq:GMoM_population_criterion_consistency}
\end{align}
Since $\Sigma_L(\theta^\star;\beta)^{-1}$ is positive definite, $Q(\theta)=0$ if and only if $M(\theta;\beta)=M(\theta^\star;\beta)$.

For the global statement, assume $L\ge r_{\mathrm{glob}}$. Then the selected population moments identify the equivalence class of $\theta^\star$, so $M(\theta;\beta)=M(\theta^\star;\beta)$ implies $\theta\sim\theta^\star$. Hence the population criterion is minimized over $\Theta$ precisely on $[\theta^\star]$. Since $\Theta$ is compact and $Q_n$ converges uniformly in probability to $Q$, the standard extremum-estimator argument~\cite{van2000asymptotic} yields $d_{\mathrm{eq}}\bigl(\widehat\theta_{\mathrm{GMoM}},\theta^\star\bigr) \xrightarrow[]{\mathbb{P}}0$.

For the local statement, assume $L\ge r_{\mathrm{loc}}(\theta^\star)$. By definition of $r_{\mathrm{loc}}(\theta^\star)$, the derivative of the selected moment map is injective on the normal space $W$. Equivalently, after passing to a local transversal to the equivalence class, the differential of the moment map has full rank at $\theta^\star$. The inverse function theorem therefore implies that there exists a sufficiently small neighborhood $\mathcal{U}$ of $[\theta^\star]$ such that
\begin{align}
    M(\theta;\beta)=M(\theta^\star;\beta), \qquad \theta\in\mathcal{U},
    \quad\Longrightarrow\quad
    \theta\sim\theta^\star .
\end{align}
Thus, the population criterion $Q$ is minimized over $\mathcal{U}$ precisely on $[\theta^\star]\cap\mathcal{U}$. Applying the same extremum-estimator argument on the compact neighborhood $\mathcal{U}$ gives $d_{\mathrm{eq}}\bigl(\widehat\theta_{\mathrm{GMoM}}^{\mathrm{loc}},\theta^\star\bigr) \xrightarrow[]{\mathbb{P}}0$.
This proves both claims.

\subsection{Proof of Theorem~\ref{thm:GMoM_model_normality}}
\label{app:proof_GMoM_model_normality}

We begin with an auxiliary lemma.
\begin{lem}[Rate of the local GMoM estimator]
\label{lem:GMoM_root_n_rate}
Assume the conditions of Theorem~\ref{thm:GMoM_model_normality}. 
Then, 
\begin{align}
    \label{eq:GMoM_root_n_rate} \Pi_W(\widehat\theta_{\mathrm{GMoM}}-\theta^\star) = O_{\mathbb{P}}(n^{-1/2}).
\end{align}
\end{lem}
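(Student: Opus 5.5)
We prove the $\sqrt{n}$ rate by the usual extremum-estimator argument in local transversal coordinates, using the quadratic structure of the criterion. By Proposition~\ref{prop:GMoM_model_consistency}(2) we may choose representatives with $\widehat\theta_{\mathrm{GMoM}}\to\theta^\star$ in probability. As in the proof of Proposition~\ref{prop:mle_quotient_efficiency}, Assumption~\ref{ass:equiv_class_submanifold} provides a $C^2$ transversal parametrization $\Psi:U_W\subset W\to\Theta$ with $\Psi(0)=\theta^\star$ and $D\Psi(0)=\mathrm{id}_W$. We can therefore write the estimator as $\Psi(\widehat w_n)$ with $\widehat w_n\xrightarrow{\mathbb{P}}0$. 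By the expansion $\Pi_W(\Psi(w)-\theta^\star)=w+O(\|w\|^2)$, it then suffices to show $\widehat w_n=O_{\mathbb{P}}(n^{-1/2})$.

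Set $F(w)\triangleq M(\Psi(w);\beta)-M(\theta^\star;\beta)$. The map $w\mapsto M(\Psi(w);\beta)$ is $C^2$, since each $T_j$ is polynomial and $\Psi$ is $C^2$. Moreover, $DF(0)=D_WM(\theta^\star;\beta)$. The key geometric input is a local linear lower bound
\begin{align}
\|F(w)\|\ge c\|w\|,\qquad \|w\|\le\delta .
\end{align}
This comes from injectivity of $D_WM(\theta^\star;\beta)$ on $W$. Since $M$ stacks $t^jT_j$ with $t=\beta/\sigma>0$ fixed, injectivity is equivalent to injectivity of $DT_{\le L}(\theta^\star)$ on $W$, which holds because $L\ge r_{\mathrm{loc}}$. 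Writing $F(w)=DF(0)w+O(\|w\|^2)$ and taking $\delta$ small then gives the bound with $c=\sigma_{\min}(D_WM)/2$.

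Next we compare criteria. Let $\Delta_n\triangleq M_n(\mathcal{Y})-M(\theta^\star;\beta)$; by Proposition~\ref{prop:empirical_feature_clt_main}, $\Delta_n=O_{\mathbb{P}}(n^{-1/2})$. Then $Q_n(\Psi(w))=(\Delta_n-F(w))^\top\Omega_n(\Delta_n-F(w))$. Since $\Omega_n\xrightarrow{\mathbb{P}}\Sigma_L^{-1}\succ0$ (Lemma~\ref{lem:sigma_beta_positive_definite_small_beta}), with probability tending to one we have $\lambda_{\min}(\Omega_n)\ge\lambda_-$ and $\|\Omega_n\|_{\mathrm{op}}\le\lambda_+$ for fixed positive constants. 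Minimality gives $Q_n(\Psi(\widehat w_n))\le Q_n(\theta^\star)=\Delta_n^\top\Omega_n\Delta_n\le\lambda_+\|\Delta_n\|^2$. Hence $\lambda_-\|\Delta_n-F(\widehat w_n)\|^2\le\lambda_+\|\Delta_n\|^2$, and the triangle inequality yields
\begin{align}
\|F(\widehat w_n)\|\le\bigl(1+\sqrt{\lambda_+/\lambda_-}\bigr)\|\Delta_n\|=O_{\mathbb{P}}(n^{-1/2}).
\end{align}
On the event $\|\widehat w_n\|\le\delta$, whose probability tends to one by consistency, the lower bound gives $\|\widehat w_n\|\le c^{-1}\|F(\widehat w_n)\|=O_{\mathbb{P}}(n^{-1/2})$, which is the claim.

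The main obstacle is the first step, not the algebra. We must ensure the chosen representative actually lies in the image of the transversal slice near $\theta^\star$. We must also handle the case where the local minimizer over $\mathcal{U}$ sits on a nontrivial orbit, so that $Q_n$ is constant along classes. Constancy along classes is automatic, because $M$ depends only on the law of $A(Z)\theta$, so $F$ is class-invariant and reparametrizing by $\Psi$ loses nothing. The remaining care is to shrink $\mathcal{U}$ so that every class meeting it meets $\Psi(\{\|w\|\le\delta\})$. This is what the local submersion structure of Assumption~\ref{ass:equiv_class_submanifold} provides.
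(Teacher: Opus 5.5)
Your argument is correct, but it takes a different route from the paper's.

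\textbf{The paper's route.} It starts from the first-order condition $DM(\widehat\theta_{\mathrm{GMoM}};\beta)^\top\Omega_n\bigl(M_n(\mathcal{Y})-M(\widehat\theta_{\mathrm{GMoM}};\beta)\bigr)=0$. It then Taylor-expands $M$ along $W$ and replaces $DM(\widehat\theta_{\mathrm{GMoM}};\beta)$ by $DM(\theta^\star;\beta)+o_{\mathbb{P}}(1)$. It uses $DM(\theta^\star;\beta)^\top\Omega_n DM(\theta^\star;\beta)\xrightarrow{\mathbb{P}}\mathcal{I}_{\mathrm{GMoM}}^{(W)}(\theta^\star;\beta)\succ 0$ on $W$. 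Finally, it absorbs an $o_{\mathbb{P}}(\|\Pi_W(\widehat\theta_{\mathrm{GMoM}}-\theta^\star)\|)$ remainder into the left-hand side.

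\textbf{Your route.} You use only the zeroth-order comparison $Q_n(\widehat\theta_{\mathrm{GMoM}})\le Q_n(\theta^\star)$, together with the quantitative local injectivity bound $\|F(w)\|\ge c\|w\|$ on the slice. Both arguments rest on the same input: injectivity of $D_WM(\theta^\star;\beta)$ on $W$, which holds because $L\ge r_{\mathrm{loc}}$.

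\textbf{What your route buys.} It does not need the minimizer to be an interior critical point. It does not need continuity of the Jacobian at the estimator. It also makes the choice of representative explicit. The paper's expansion $M(\widehat\theta_{\mathrm{GMoM}};\beta)=M(\theta^\star;\beta)+DM(\theta^\star;\beta)\Pi_W(\widehat\theta_{\mathrm{GMoM}}-\theta^\star)+o_{\mathbb{P}}(\|\Pi_W(\widehat\theta_{\mathrm{GMoM}}-\theta^\star)\|)$ tacitly requires the tangential part of $\widehat\theta_{\mathrm{GMoM}}-\theta^\star$ to be negligible. Your slice representative $\Psi(\widehat w_n)$ guarantees exactly this. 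For a curved orbit, a slowly converging representative picks up a second-order $W$-component from its tangential offset, and the stated rate can then fail.

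\textbf{What the paper's route buys.} It produces the linearized normal equation itself, which is reused directly for the asymptotically linear representation in the proof of Theorem~\ref{thm:GMoM_model_normality}. With your approach you would still carry out that linearization afterwards. Since the rate is then already in hand, the remainders there are immediately $o_{\mathbb{P}}(n^{-1/2})$.
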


\begin{proof}[Proof of Lemma~\ref{lem:GMoM_root_n_rate}]
Since $\widehat\theta_{\mathrm{GMoM}}$ is a local minimizer of the GMoM target function $Q_n$~\eqref{eq:def_GMoM_criterion_main}, the first-order optimality condition along $W$ gives
\begin{align}
    DM(\widehat\theta_{\mathrm{GMoM}};\beta)^\top \Omega_n \bigl(M_n(\mathcal{Y})-M(\widehat\theta_{\mathrm{GMoM}};\beta)\bigr) = 0.
    \label{eq:GMoM_root_rate_FOC}
\end{align}
A Taylor expansion of $M(\cdot;\beta)$ around $\theta^\star$ along the normal directions $W$ gives
\begin{align}
    M(\widehat\theta_{\mathrm{GMoM}};\beta) = M(\theta^\star;\beta) + DM(\theta^\star;\beta) \Pi_W(\widehat\theta_{\mathrm{GMoM}}-\theta^\star) + o_{\mathbb{P}}\!\left( \big\| \Pi_W(\widehat\theta_{\mathrm{GMoM}}-\theta^\star)\big\| \right). \label{eq:GMoM_root_rate_Taylor}
\end{align}
Substituting~\eqref{eq:GMoM_root_rate_Taylor} into
\eqref{eq:GMoM_root_rate_FOC} gives
\begin{align}
    &DM(\widehat\theta_{\mathrm{GMoM}};\beta)^\top \Omega_n \Big[ M_n(\mathcal{Y})-M(\theta^\star;\beta) - DM(\theta^\star;\beta) \Pi_W(\widehat\theta_{\mathrm{GMoM}}-\theta^\star)
    \nonumber\\
    &\hspace{4.5cm}
    + o_{\mathbb{P}}\!\left( \big\|\Pi_W(\widehat\theta_{\mathrm{GMoM}}-\theta^\star)\big\| \right) \Big] = 0. \label{eq:GMoM_root_rate_after_substitution}
\end{align}
Rearranging~\eqref{eq:GMoM_root_rate_after_substitution}, we obtain
\begin{align}
    &DM(\widehat\theta_{\mathrm{GMoM}};\beta)^\top \Omega_n DM(\theta^\star;\beta) \Pi_W(\widehat\theta_{\mathrm{GMoM}}-\theta^\star)
    \nonumber\\
    &\qquad =    DM(\widehat\theta_{\mathrm{GMoM}};\beta)^\top \Omega_n \bigl(M_n(\mathcal{Y})-M(\theta^\star;\beta)\bigr) + o_{\mathbb{P}}\!\left( \big\|\Pi_W(\widehat\theta_{\mathrm{GMoM}}-\theta^\star)\big\| \right). \label{eq:GMoM_root_rate_rearranged}
\end{align}
By consistency and continuity of the Jacobian, $DM(\widehat\theta_{\mathrm{GMoM}};\beta) = DM(\theta^\star;\beta)+o_{\mathbb{P}}(1)$.
Therefore, the left-hand side of~\eqref{eq:GMoM_root_rate_rearranged} equals
\begin{align}
    DM(\theta^\star;\beta)^\top \Omega_n DM(\theta^\star;\beta) \Pi_W(\widehat\theta_{\mathrm{GMoM}}-\theta^\star) + o_{\mathbb{P}}\!\left( \big\|\Pi_W(\widehat\theta_{\mathrm{GMoM}}-\theta^\star)\big\| \right),
\end{align}
while, using $M_n(\mathcal{Y})-M(\theta^\star;\beta)=O_{\mathbb{P}}(n^{-1/2})$ (Proposition~\ref{prop:empirical_feature_clt_main}), the first term on the right-hand side of~\eqref{eq:GMoM_root_rate_rearranged} equals
\begin{align}
    DM(\theta^\star;\beta)^\top \Omega_n \bigl(M_n(\mathcal{Y})-M(\theta^\star;\beta)\bigr) + o_{\mathbb{P}}(n^{-1/2}).
\end{align}
Combining these identities gives
\begin{align}
    &DM(\theta^\star;\beta)^\top \Omega_n DM(\theta^\star;\beta)    \Pi_W(\widehat\theta_{\mathrm{GMoM}}-\theta^\star)
    \nonumber\\
    &\qquad = DM(\theta^\star;\beta)^\top \Omega_n \bigl(M_n(\mathcal{Y})-M(\theta^\star;\beta)\bigr) + o_{\mathbb{P}}(n^{-1/2}) + o_{\mathbb{P}}\!\left( \big\|\Pi_W(\widehat\theta_{\mathrm{GMoM}}-\theta^\star)\big\| \right). \label{eq:GMoM_root_rate_prelinear}
\end{align}
Moreover, as $\Omega_n \to \Sigma_L(\theta^\star;\beta)^{-1}$,
\begin{align}
    DM(\theta^\star;\beta)^\top \Omega_n DM(\theta^\star;\beta) \xrightarrow{\mathbb{P}} \mathcal{I}_{\mathrm{GMoM}}^{(W)}(\theta^\star;\beta),    \label{eq:GMoM_information_limit}
\end{align}
and the limiting matrix is positive definite on $W$. Hence the left-hand matrix in \eqref{eq:GMoM_root_rate_prelinear} is invertible with probability tending to one and has inverse bounded in probability. Therefore, using $M_n(\mathcal{Y})-M(\theta^\star;\beta)=O_{\mathbb{P}}(n^{-1/2})$,
\eqref{eq:GMoM_root_rate_prelinear} implies
\begin{align}
    \big\| \Pi_W(\widehat\theta_{\mathrm{GMoM}}-\theta^\star) \big\| \le  O_{\mathbb{P}}(n^{-1/2}) + o_{\mathbb{P}}\!\left( \big\| \Pi_W(\widehat\theta_{\mathrm{GMoM}}-\theta^\star) \big\| \right).
\end{align}
Absorbing the second term into the left-hand side gives $\Pi_W(\widehat\theta_{\mathrm{GMoM}}-\theta^\star) = O_{\mathbb{P}}(n^{-1/2})$, as claimed.
\end{proof}

Now we return to the proof of Theorem~\ref{thm:GMoM_model_normality}. Since
$L\ge r_{\mathrm{loc}}(\theta^\star)$, every nonzero direction $h\in W$ is detected by some
$DT_j(\theta^\star)[h]$ with $j\le L$. Hence
$DM(\theta^\star;\beta)$ is injective on $W$. Since
$\Sigma_L(\theta^\star;\beta)$ is positive definite, the matrix
\begin{align}
    \mathcal{I}_{\mathrm{GMoM}}^{(W)}(\theta^\star;\beta) = D_WM(\theta^\star;\beta)^\top \Sigma_L(\theta^\star;\beta)^{-1} D_WM(\theta^\star;\beta)
\end{align}
is positive definite on $W$, and therefore invertible.

By Lemma~\ref{lem:GMoM_root_n_rate}, after choosing a representative of
$\widehat\theta_{\mathrm{GMoM}}$ in its equivalence class that converges to
$\theta^\star$,
\begin{align}
    \label{eq:GMoM_root_n_rate_used}    \Pi_W(\widehat\theta_{\mathrm{GMoM}}-\theta^\star) = O_{\mathbb{P}}(n^{-1/2}).
\end{align}
Following~\eqref{eq:GMoM_root_rate_prelinear}, and using~\eqref{eq:GMoM_root_n_rate_used}, we obtain
\begin{align}
    \label{eq:GMoM_linearized_normal_equation}
    DM(\theta^\star;\beta)^\top & \Omega_n DM(\theta^\star;\beta)\,    \Pi_W(\widehat\theta_{\mathrm{GMoM}}-\theta^\star)
    \\ & =
    DM(\theta^\star;\beta)^\top \Omega_n \bigl(M_n(\mathcal{Y})-M(\theta^\star;\beta)\bigr) + o_{\mathbb{P}}(n^{-1/2}).
\end{align}
Multiplying~\eqref{eq:GMoM_linearized_normal_equation} by $\sqrt{n}$ and using
\eqref{eq:GMoM_information_limit} yields
\begin{align}
    \label{eq:GMoM_asymptotic_linear_representation}
    \sqrt{n}\, \Pi_W(\widehat\theta_{\mathrm{GMoM}}-\theta^\star) = \mathcal{I}_{\mathrm{GMoM}}^{(W)}(\theta^\star;\beta)^{-1} DM(\theta^\star;\beta)^\top \Sigma_L(\theta^\star;\beta)^{-1} \sqrt{n}\,\bigl(M_n(\mathcal{Y})-M(\theta^\star;\beta)\bigr) + o_{\mathbb{P}}(1).
\end{align}
The conclusion follows from Proposition~\ref{prop:empirical_feature_clt_main} and
Slutsky's theorem.

\subsection{Proof of Theorem~\ref{thm:H_fixed_beta_fisher_moment_identity}}
\label{app:proof_H_fixed_beta_fisher_mo}

Define $R_\beta \triangleq \mathcal{I}_{\mathrm{obs}}^{(W)}(\theta^\star;\beta) -\mathcal{I}_{\mathrm{GMoM}}^{(W)}(\theta^\star;\beta)$.
Since both $\mathcal{I}_{\mathrm{obs}}^{(W)}(\theta^\star;\beta)$ and $\mathcal{I}_{\mathrm{GMoM}}^{(W)}(\theta^\star;\beta)$ are symmetric operators on $W$, so is $R_\beta$.

Recall the definition of the directional score $S_{\theta^\star,\beta}(y;h)$ from~\eqref{eq:score_dir2}. Fix $h\in W$. By~\eqref{eqn:Fisher_matrix_to_directional_score_relation},
\begin{align}
    h^\top \mathcal{I}_{\mathrm{obs}}^{(W)}(\theta^\star;\beta)h =  \mathbb{E}_{Y\sim p_{\theta^\star,\beta}}\!\left[S_{\theta^\star,\beta}(Y;h)^2\right].
\end{align}
Recall the definition of the covariance matrix of the GMoM features~\eqref{eqn:def_Sigma_L}. Writing $g(Y, \theta)\triangleq \psi(Y)-M(\theta^\star;\beta)$, we have
\begin{align}
    \label{eqn:cov_relation}     \mathbb{E}_{Y\sim p_{\theta^\star,\beta}}\!\big[g(Y, \theta)g(Y, \theta)^\top\big] = \Sigma_L(\theta^\star;\beta).
\end{align}
Also, by~\eqref{eq:def_GMoM_information},
\begin{align}
    \mathcal{I}_{\mathrm{GMoM}}^{(W)}(\theta^\star;\beta) = D_WM(\theta^\star;\beta)^\top \Sigma_L(\theta^\star;\beta)^{-1} D_WM(\theta^\star;\beta),
\end{align}
where, by~\eqref{eqn:def_Jacobian_moments}, $D_WM(\theta^\star;\beta)$ denotes the Jacobian of the moment map restricted to $W$.

By Lemma~\ref{lem:score_projection_selected_moments},
\begin{align}
    S_{\theta^\star,\beta}(Y;h) = \Big\langle \Sigma_L(\theta^\star;\beta)^{-1}DM(\theta^\star;\beta)h,\, g(Y, \theta) \Big\rangle + r_\beta(Y;h),
\end{align}
where
\begin{align}
     \mathbb{E}_{Y\sim p_{\theta^\star,\beta}}\!\Bigg[\Big\langle\Sigma_L(\theta^\star;\beta)^{-1}DM(\theta^\star;\beta)h,\, g(Y, \theta) \Big\rangle r_\beta(Y;h) \Bigg] =0
     \label{eqn:decomp_orth}
\end{align}
and $\|r_\beta(\cdot;h)\|_{L^2} = O\left(t^{L+1}\|h\|\right)$ uniformly for $h$ in bounded subsets of $W$.

Set $a_h \triangleq \Sigma_L(\theta^\star;\beta)^{-1}DM(\theta^\star;\beta)h$.
Since $g(Y, \theta)$ is centered and has covariance given by~\eqref{eqn:cov_relation}, the second moment of the scalar random variable $\langle a_h,g(Y, \theta)\rangle$ is
\begin{align}
     \mathbb{E}_{Y\sim p_{\theta^\star,\beta}}\!\left[ \langle a_h,g(Y, \theta)\rangle^2\right]
    &=
    a_h^\top \Sigma_L(\theta^\star;\beta)\,a_h
    \\
    &=
    h^\top DM(\theta^\star;\beta)^\top \Sigma_L(\theta^\star;\beta)^{-1} DM(\theta^\star;\beta)h
    \\
    &=
    h^\top \mathcal{I}_{\mathrm{GMoM}}^{(W)}(\theta^\star;\beta)h.
\end{align}
Using the orthogonality of the decomposition~\eqref{eqn:decomp_orth},
\begin{align}
    h^\top \mathcal{I}_{\mathrm{obs}}^{(W)}(\theta^\star;\beta)h
    &= \mathbb{E}_{Y\sim p_{\theta^\star,\beta}}\!\left[ \langle a_h,g(Y, \theta)\rangle^2 \right] +  \mathbb{E}_{Y\sim p_{\theta^\star,\beta}}\!\left[r_\beta(Y;h)^2 \right]
    \\
    &=
    h^\top \mathcal{I}_{\mathrm{GMoM}}^{(W)}(\theta^\star;\beta)h +  \mathbb{E}_{Y\sim p_{\theta^\star,\beta}}\!\left[ r_\beta(Y;h)^2 \right].
\end{align}
Hence
\begin{align}
    h^\top R_\beta h =  \mathbb{E}_{Y\sim p_{\theta^\star,\beta}}\!\left[ r_\beta(Y;h)^2 \right] = O\left(t^{2L+2}\|h\|^2\right).
\end{align}
In particular, $R_\beta$ is positive semidefinite. Since $R_\beta$ is symmetric, the operator norm gives
\begin{align}
    \|R_\beta\|_{\mathrm{op}} = \sup_{\|h\|=1} |h^\top R_\beta h| = \sup_{\|h\|=1} h^\top R_\beta h =    O\left(t^{2L+2}\right).
\end{align}
This proves the claim.

\end{appendices}

\end{document}